\newcommand{\norm}[1]{\left\|#1\right\|}
\newcommand{\abs}[1]{\left\lvert#1\right\rvert}
\newtheorem{theorem}{Theorem}[section]
\newtheorem{corollary}[theorem]{Corollary}
\newtheorem{lemma}[theorem]{Lemma}
\newtheorem{proposition}[theorem]{Proposition}
\theoremstyle{definition}
\newtheorem{definition}[theorem]{Definition}
\newtheorem{remark}[theorem]{Remark}
\title[$L^p$- null controllability]
{$L^p$- Partially null controllability of abstract fractional differential inclusion with nonlocal condition} 
\author[Bholanath Kumbhakar and Dwijendra Narain Pandey]{}
\subjclass{Primary:  34A60,  93B05; Secondary: 34H05 .}
\keywords{	$L^p$-Null controllability, Differential inclusion, Convexity, Nonlocal condition.}
\thanks{The first author is supported by Council of Scientific and Industrial Research, New Delhi, Government of India (File No. 09/143(0954)/2019-EMR-I)}
\thanks{$^*$Corresponding author: Dwijendra Narain Pandey}
\begin{document}
\maketitle

\centerline{\scshape
 Bholanath Kumbhakar$^{{\href{bkumbhakar@mt.iitr.ac.in}{\textrm{\Letter}}}1}$,
  Deeksha$^{{\href{deeksha_s@ma.iitr.ac.in}{\textrm{\Letter}}}1}$
 and Dwijendra Narain Pandey$^{{\href{dwij@ma.iitr.ac.in}{\textrm{\Letter}}}*1}$}

\medskip

{\footnotesize
 \centerline{$^1$Indian Institute of Technology Roorkee, India}
} 

\medskip

{\footnotesize
 \centerline{$^2$Indian Institute of Technology Roorkee, India}
}

\bigskip

 \centerline{(Communicated by Handling Editor)}


\begin{abstract}
	 In this work, we investigate the $L^p$- partial null controllability of the abstract semilinear fractional-order differential inclusion with nonlocal conditions. The set of admissible controls is characterized by $u\in L^p(I,U)$, $1<p<\infty$, $I=[0,\nu]$, where $U$ is a uniformly convex Banach space. Assuming partial null controllability for the fractional-order linear system with a source term, we employ an approximate solvability method to simplify the problem to reduce it to finite-dimensional subspaces. Consequently, the solutions of the original problem are obtained as limiting functions within these subspaces. The paper tackles a challenge stemming from the assumption that $U$ is a uniformly convex Banach space, which introduces convexity issues in constructing the required control.
     These complications do not occur if $U$ is a separable Hilbert space.
     This study introduces a novel approach by resolving the convexity issue, thereby enabling
      $L^p(I, U)$ partially null controllability of the  semilinear fractional-order differential control system, with $U$ being a uniformly convex Banach space.
\end{abstract}


\section{Introduction}

Fractional calculus provides a generalized framework where integration and differentiation are defined for non-integer orders. Fractional differential equations are widely encountered in numerous fields, including diffusion processes, electrical engineering, viscoelastic behavior, control of dynamic systems, quantum physics, biological studies, electromagnetic theory, signal analysis, financial modeling, economic theory, and many others (see \cite{hilfer2000applications}, \cite{kilbas2006theory}). On the other hand, fractional differential inclusions find extensive applications in mathematical modeling, especially in economics and optimal control. This has led to extensive research by various authors on the existence of solutions for different types of problems (see
\cite{wang2011existence}, \cite{yan2011nonlocal}). A comprehensive review of the theory of differential inclusion may be found in \cite{deimling2011multivalued} and \cite{kamenskii2011condensing}.

Here, we address the problem of controllability, with a specific focus on null controllability, aiming to bring the solution to the trivial null state in finite time.
The presence of fractional derivatives in time, as we shall observe, leads us to reconsider the controllability analysis from  \cite{matignon1996some}, to include memory effects caused by the tail of the time fractional derivative. Thus, for null controllability to equilibrium, both the state and the memory term must be taken into account. The work in \cite{lu2016lack}, considers the problem of full controllability (refer to definition \ref{fully null controllable}) and demonstrate that, because of memory effects, controllability is unattainable in the fractional setting, even for ODEs.
Even in finite-dimensional systems with full-dimensional control, this negative result still holds. Therefore, these negative results remain valid for fractional-in-time PDEs, irrespective of whether they exhibit parabolic or hyperbolic behavior. Unlike the conclusions drawn in classical ODE and PDE control literature, this negative result presents an entirely opposing behavior.

Based on the above discussion, the present manuscript focuses on investigating the  partial null controllability (see Definition \ref{partially null controlllable}) of abstract fractional-order semilinear differential inclusions with specified nonlocal conditions.
\begin{equation}\label{1.1}
	\begin{cases}
		^{C}D^{\alpha}_{t}q(t)\in \mathbb{A}q(t)+F(t,q(t))+\mathbb{B}u(t), t\in I;\\
		q(0)\in x_0+g(q),
	\end{cases}
\end{equation}  
where $x_0\in X$, $^{C}D^{\alpha}_{t}$ denotes the Caputo fractional derivative of order $\alpha$ with order $\frac{1}{p}<\alpha<1$, $\mathbb{A}$ is the infinitesimal generator of the $C_{0}$ - semigroup $\{T(t)\}_{t\ge 0}$,  consisting of bounded linear operators on a separable reflexive Banach space $X$ with a monotone Schauder basis, $F: I\times X\multimap X$ is a multivalued map, $\mathbb{B}$ is a bounded linear mapping from a uniformly convex Banach space $U$ into $X$, $u\in L^p(I, U) (1<p<\infty)$ and $g: C(I, X)\multimap X$ is a multivalued nonlocal condition.

In order to define the mild solution to the problem \eqref{1.1} we define the measurable selection multimap $S_F:C(I,X)\multimap L^2(I,X)$ as follows: 
\begin{equation}\label{selection}
	S_F(q)=\{ f\in L^2(I;X)~\text{ with}~ f(t)\in F(t,q(t)) ~\text{a.a.}~ t\in I \}.
\end{equation} 
 It will be shown later that, for certain conditions on the multivalued map $F$, the map $S_F$ is well-defined. Moreover, if $F$ has convex values, then $S_{F}$ will also possesses convex values. We now proceed to define the solution concept for equation \eqref{1.1}.
\begin{definition}
	A continuous function $q:I\to X$ is said to be a mild solution of \eqref{1.1} if 
	\begin{equation*}
		q(t)=\mathcal{S}_{\alpha}(t)(x_0+w)+\int_{0}^{t}(t-s)^{\alpha-1}\mathcal{T}_{\alpha}(t-s)f(s)ds+\int_{0}^{t}(t-s)^{\alpha-1}\mathcal{T}_{\alpha}(t-s)\mathbb{B}u(s)ds, t\in I,
	\end{equation*}
	for some $w\in g(q), f\in S_F(q)$.
\end{definition}
Here,
\begin{equation}\label{Talpha}
        \mathcal{S}_{\alpha}(t)=\int^{\infty}_{0}\xi_{\alpha}(\tau)T(t^{\alpha}\tau)d\tau,~~  \mathcal{T}_{\alpha}(t)=\alpha\int^{\infty}_{0}\tau \xi_{\alpha}(\tau)T(t^{\alpha}\tau)d\tau.
    \end{equation}
    Here
    \begin{equation}
         \xi_{\alpha}(\tau)=\frac{1}{\alpha}\tau^{-1-\frac{1}{\alpha}}\overline{w_{\alpha}}(\tau^{-\frac{1}{\alpha}})\geq 0;
    \end{equation}
    where
    \begin{equation}
        \overline{w_{\alpha}}(\tau)=\frac{1}{\pi}\sum^{\infty}_{n=1}(-1)^{n-1}\tau^{-n\alpha-1}\frac{\Gamma(n\alpha+1)}{n!}sin(n\pi\alpha),~ \tau\in(0,\infty),
    \end{equation}
    and  $\xi_{\alpha}$ is a probability density function defined on $(0,\infty)$ , that is 
    \begin{equation}
           \xi_{\alpha}(\tau)\geq0,~ \tau \in (0,\infty)~\text{and}~ \int^{\infty}_{0}\xi_{\alpha}(\tau)d\tau=1.
    \end{equation}

We prove that the inclusion problem \eqref{1.1} is partially null controllable in the following sense.
\begin{definition}\label{partially null controlllable}
We say that the fractional control problem \eqref{1.1} is partially null controllable in $I$ with respect to $L^p$-control if for given $x_0\in X$, there exists $u\in L^p(I,U)$ and a mild solution $q$ of \eqref{1.1} such that $q(0)=x_0$ and $q(\nu)=0$.
\end{definition}
Following \cite{lu2016lack} we introduce the concept of fully null controllability.
\begin{definition}\label{fully null controllable}
We say that the fractional control problem \eqref{1.1} is fully null controllable in $I$ with respect to $L^p$-control if for given $x_0\in X$, there exists $u\in L^p(I,U)$ and a mild solution $q$ of \eqref{1.1} such that $q(0)=x_0$ and $q(t)=0$ for $t\ge \nu$.
\end{definition}
Consider the linear fractional control system
\begin{equation}\label{1.2}
	\begin{cases}
		^{C}D^{\alpha}_{t}q(t)=\mathbb{A}q(t)+\mathbb{B}u(t), t\in I;\\
		q(0)= x_0.
	\end{cases}
\end{equation} 
The authors in \cite{lu2016lack} proved the following.
\begin{theorem}\label{lack of null controllability}
    The linear fractional control system \eqref{1.2} is never fully null controllable for any $\nu>0$ in the sense of Definition \ref{fully null controllable}.
\end{theorem}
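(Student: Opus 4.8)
The plan is to argue by contradiction, exploiting the fact that the Caputo derivative is a \emph{nonlocal} operator that carries the memory of the entire past trajectory. Suppose \eqref{1.2} were fully null controllable for some $x_0\neq 0$. Then there exist a control $u\in L^p(I,U)$ and a mild solution $q$ with $q(0)=x_0$ and $q(t)=0$ for all $t\geq\nu$. Extending $q$ and $u$ by $0$ beyond $\nu$, the state obeys the uncontrolled equation ${}^{C}D^{\alpha}_t q(t)=\mathbb{A}q(t)$ on $(\nu,\infty)$, whose right-hand side vanishes there because $q\equiv 0$.

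The key observation is what this forces on the memory tail. For $t>\nu$ I would write out the Caputo derivative and split the integral at $\nu$; since $q$, and hence $q'$, vanishes on $(\nu,t)$, the contribution of $(\nu,t)$ drops out, leaving only
\[
\frac{1}{\Gamma(1-\alpha)}\int_0^{\nu}(t-s)^{-\alpha}\,q'(s)\,ds = 0, \qquad \text{for all } t>\nu .
\]
Thus the Abel-type convolution of $q'$ (extended by zero outside $[0,\nu]$) with the kernel $k(t)=t^{-\alpha}$ vanishes identically on $(\nu,\infty)$; equivalently, $k\ast q'$ is supported in $[0,\nu]$.

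The crucial step is to deduce from this that $q'\equiv 0$ on $[0,\nu]$. I would use the Laplace transform: since $q'$ and $k\ast q'$ are compactly supported, their transforms $\widehat{q'}$ and $\widehat{k\ast q'}$ are entire, and the convolution theorem gives
\[
\widehat{k\ast q'}(\lambda) = \Gamma(1-\alpha)\,\lambda^{\alpha-1}\,\widehat{q'}(\lambda).
\]
Were $\widehat{q'}\not\equiv 0$, then $\lambda^{\alpha-1}=\widehat{k\ast q'}(\lambda)/\bigl(\Gamma(1-\alpha)\widehat{q'}(\lambda)\bigr)$ would be a ratio of entire functions, hence meromorphic and single-valued; but for $\alpha\in(\tfrac1p,1)$ the factor $\lambda^{\alpha-1}$ has a genuine branch point at the origin, a contradiction. (Titchmarsh's convolution theorem yields the same conclusion.) Hence $q'=0$ a.e.\ on $[0,\nu]$, so $q\equiv x_0$ there; continuity together with $q(\nu)=0$ forces $x_0=0$, contradicting $x_0\neq 0$. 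Therefore \eqref{1.2} can never be fully null controllable for any $\nu>0$.

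I expect the main obstacle to be making the identity for ${}^{C}D^{\alpha}_t q$ rigorous in the abstract setting: a mild solution is only continuous, so neither $q'$ nor $\mathbb{A}q$ need exist pointwise for the unbounded generator $\mathbb{A}$. To circumvent this I would recast the equation in its equivalent Riemann--Liouville integral form, $q(t)=x_0+\frac{1}{\Gamma(\alpha)}\int_0^t (t-s)^{\alpha-1}\bigl(\mathbb{A}q(s)+\mathbb{B}u(s)\bigr)\,ds$, evaluate it for $t\geq\nu$ (where the contributions from $(\nu,t)$ again drop out, leaving a constant $-\Gamma(\alpha)x_0$), differentiate once in $t$ to eliminate that constant, and apply the same Laplace-transform/branch-point argument to the kernel $(t-s)^{\alpha-2}$. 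This isolates the essential analytic mechanism --- the incompatibility of a branch-point kernel with compactly supported data --- which is precisely the memory obstruction identified in \cite{lu2016lack}.
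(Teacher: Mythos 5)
First, a point of calibration: the paper does not prove Theorem \ref{lack of null controllability} at all --- it simply quotes the result from \cite{lu2016lack} --- so the only meaningful benchmark is the argument of that reference. Your proof reconstructs precisely its mechanism: if $q$ vanishes on $(\nu,\infty)$ while the control is switched off, the equation forces the memory tail $\int_0^{\nu}(t-s)^{-\alpha}q'(s)\,ds$ to vanish for every $t>\nu$, and injectivity of this Abel-tail transform annihilates $q'$, hence forces $x_0=0$. Where you genuinely differ is the injectivity step: L\"u--Zuazua expand $(t-s)^{-\alpha}$ in powers of $s/t$ and deduce that all moments $\int_0^{\nu}s^k q'(s)\,ds$ vanish (the binomial coefficients are nonzero because $\alpha$ is not an integer), whence $q'\equiv 0$ by density of polynomials; you instead use entirety of the transforms of compactly supported functions together with the branch point of $\lambda^{\alpha-1}$ at the origin. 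Both are correct, and in the vector-valued setting both reduce to scalars by pairing with $x^*\in X^*$; your route is slicker, theirs more elementary. One parenthetical, however, is wrong as stated: the classical Titchmarsh convolution theorem requires both factors to be compactly supported (and its Titchmarsh--Lions variant only controls the \emph{lower} endpoint of the support), while $k(t)=t^{-\alpha}$ is not compactly supported, so it does not ``yield the same conclusion'' off the shelf. Drop that remark and rely on the branch-point argument.

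Two repairs are needed where you handle the abstract setting, both within reach of your sketch. (1) Your fallback integral form places $\mathbb{A}q(s)$ under the integral, but a mild solution need not take values in $D(\mathbb{A})$ when $\mathbb{A}$ is unbounded; the standard fix is to pair with $x^*\in D(\mathbb{A}^*)$, for which $\varphi(t)=\langle x^*,q(t)\rangle$ satisfies the scalar identity $\varphi(t)=\langle x^*,x_0\rangle+\frac{1}{\Gamma(\alpha)}\int_0^t(t-s)^{\alpha-1}\bigl(\langle \mathbb{A}^*x^*,q(s)\rangle+\langle x^*,\mathbb{B}u(s)\rangle\bigr)\,ds$, and $D(\mathbb{A}^*)$ is dense in $X^*$ since the paper's $X$ is reflexive. (2) Once in this weak integral form you do not even need the transform machinery: for $t\ge\nu$ the identity reads $0=\langle x^*,x_0\rangle+\frac{1}{\Gamma(\alpha)}\int_0^{\nu}(t-s)^{\alpha-1}h(s)\,ds$ with $h(s)=\langle \mathbb{A}^*x^*,q(s)\rangle+\langle x^*,\mathbb{B}u(s)\rangle\in L^1(0,\nu)$, and since $\alpha<1$ the integral is dominated by $(t-\nu)^{\alpha-1}\norm{h}_{L^1}\to 0$ as $t\to\infty$, giving $\langle x^*,x_0\rangle=0$ for all such $x^*$ and hence $x_0=0$ directly. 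So your overall strategy is sound and faithful to the cited source, and its integral-form version actually simplifies past the branch-point argument you propose.
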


    The negative result is unaffected by the functional setting, the generator $\mathbb{A}$ and the control operator $\mathbb{B}$. Moreover, the same conclusion holds for finite-dimensional systems, even when $\mathbb{B}$ is surjective.  Specifically, the first-order scalar fractional differential equation is also not null controllable.

Keeping Theorem \ref{lack of null controllability} in mind, we are interested in partially null controllable for the problem \eqref{1.1}. There are several articles available regarding the partially null controllable in separable Hilbert spaces. In \cite{MR2098245} and \cite{MR3063167}, the authors explored null
controllability in the context of semilinear differential equations within separable
Hilbert spaces.
In \cite{borah2024null}, the author focusing on Null controllability of Fractional differential system with nonlocal intial condition within Hilbert space. In \cite{kumbhakar2025p}, the author explored $L^{p}$ - Null controllability of an abstract differential inclusion with a nonlocal condition within  Banach space.\\
To the best of our knowledge, the study of partially null controllability problems employing $L^{p}(I,U)$ controls within the framework of fractional semilinear evolution inclusions, especially in the presence of general nonlocal conditions, has not yet been thoroughly investigated in the literature. This gap forms the principal motivation for our current research.

Our research expands upon the previously mentioned studies in several significant ways:
\begin{itemize}
	\item[(1)] We consider the control space, denoted as $U$, to be a uniformly convex Banach space rather than the previously assumed separable Hilbert space. Additionally, we consider a broader range of values for $p,$ allowing it to be within the interval $(1, \infty)$, rather than being restricted to $p = 2$.
	\item[(2)] We remove the requirement for the semigroup generated by the operator $\mathbb{A}$ to be compact, which was a prior constraint.
	\item[(3)] It is important to note that a fractional differential inclusion is a more general concept than a fractional differential equation. Therefore, our problem represents an obvious extension of the results presented in references \cite{borah2024null}.
	\item[(4)] We introduce novelty by incorporating nonlocal conditions, which encompass various scenarios such as initial value problems, periodic and anti-periodic problems, more general two-point problems, and numerous other conditions.
\end{itemize}

In summary, our work significantly broadens the scope of research in partially null controllability, addressing various aspects and relaxing prior assumptions to explore uncharted territory in this field.

The paper is neatly structured into six sections:
\begin{itemize}
	\item[(1)] The introduction where we explain the concepts of null controllability.
	\item[(2)] Section 2 revisits preliminary concepts used in obtaining the results.
    \item[(3)] In section 3, we elaborate the existence results for the problem (\ref{1.1}) 
	\item[(4)] In Section 4, we elaborate on the null controllability results.
	\item[(5)] Section 5 presents an application, reinforcing the derived results.
	\item[(6)] Section 6 serves as an appendix.
\end{itemize}
\section{Preliminaries}
We will provide the necessary lemmas and definitions in this section to form the foundation of our study. Since these have been extensively covered in the literature, we will provide the statements along with the necessary references.
Let $(X, \norm{\cdot})$
be a Banach space, with dual space $X^*$  and the duality pairing between $X$ and $X^*$ is denoted by  $\langle \cdot, \cdot\rangle_X$. We consider the Banach spaces 
$\mathcal{L}(X, Y) = \{T: X \rightarrow Y ~|~ \text{T is a bounded linear operator}\}$ and $\mathcal{L}(X) = \{T: X \rightarrow X ~|~ T~ \text{is a bounded linear operator}\}$. Also, throughout the paper integrations will be in Lebesgue-Bochner sense.

 \subsection{Basic definition on fractional calculus}
	We begin this section by outlining definitions, notations, and basic facts concerning fractional calculus, starting with the Riemann-Liouville fractional integral.
    \begin{definition}\cite{kilbas2006theory}
     The Riemann-Liouville fractional integral of order $\alpha$ with $0<\alpha<1$ for a function $f\in L^{1}(I,X)$ is defined as 
     \begin{equation}
         I^{\alpha}f(t)=\frac{1}{\Gamma(\alpha)}\int^{t}_{0}\frac{f(s)}{(t-s)^{1-\alpha}}ds ,~t\in I,
     \end{equation}
     where $\Gamma(\cdot)$ denotes the gamma function.
    \end{definition}
    \begin{definition}\cite{kilbas2006theory}
     The Caputo derivative of order $\alpha , 0 <\alpha<1$ for a function $f\in C^{1}(I,X)$
     is given as 
     \begin{equation}
         ^CD^{\alpha}_{t}f(t)=\frac{1}{\Gamma(1-\alpha)}\int^{t}_{0}\frac{f^{\prime}(s)}{(t-s)^{\alpha}}ds , t\in I.
     \end{equation}
     \end{definition}
     Now consider the linear system
     \begin{equation}\label{frcational linear system}
         \begin{cases}
             	^{C}D^{\alpha}_{t}q(t)= \mathbb{A}q(t)+f(t), t\in I;\\
		q(0)=x_0,
         \end{cases}
     \end{equation}
where the operators $\mathbb{A}$ is as in Problem \eqref{1.1} and $f\in L^2(I,X)$. Then the mild solution of \eqref{frcational linear system} is given by
\begin{equation}
    q(t)=S_{\alpha}(t)x_0+\int_{0}^{t}\mathcal{T}_{\alpha}(t-s)f(s)ds, ~t\in I.
\end{equation}
We consider the following useful Lemma.
 \begin{lemma}\label{Properties of Salpha Talpha}
    The operators $\mathcal{S}_{\alpha}(t)$ and $\mathcal{T}_{\alpha}(t)$ have the following properties:\\
    \begin{itemize}
        \item[(i)] The operators $\mathcal{S}_{\alpha}(t)$ and $\mathcal{T}_{\alpha}(t)$ are linear and bounded . Moreover 
    \begin{equation}
         \norm{\mathcal{S}_{\alpha}(t)}_{\mathcal{L}(X)}\leq M   
    \end{equation}
     and
    \begin{equation}
         \norm{\mathcal{T}_{\alpha}(t)}_{\mathcal{L}(X)}\leq \frac{M}{\Gamma(\alpha)}   
    \end{equation}
    Here, $M=\sup_{t\in I}\norm{T(t)}_{\mathcal{L}(X)}$.
    \item[(ii)] The operators $\mathcal{S}_{\alpha}(t)$ and $\mathcal{T}_{\alpha}(t)$ are strongly continuous for $t\geq 0$.
    \end{itemize}
    \end{lemma}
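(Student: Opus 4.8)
The plan is to reduce everything to elementary estimates on the $C_0$-semigroup $\{T(t)\}_{t\ge 0}$ under the Bochner integral, using only two structural facts about $\xi_\alpha$: that it is a probability density on $(0,\infty)$, and the value of its first moment. Linearity of both $\mathcal{S}_\alpha(t)$ and $\mathcal{T}_\alpha(t)$ is immediate, since $T(t^\alpha\tau)$ is linear for each fixed $\tau$ and the integral is linear in the integrand.

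For the boundedness bounds in (i), first I would estimate, for $x\in X$,
$$\norm{\mathcal{S}_\alpha(t)x}\le \int_0^\infty \xi_\alpha(\tau)\,\norm{T(t^\alpha\tau)x}\,d\tau \le M\norm{x}\int_0^\infty \xi_\alpha(\tau)\,d\tau = M\norm{x},$$
where the uniform bound $\norm{T(s)}_{\mathcal{L}(X)}\le M$ is pulled out and $\int_0^\infty \xi_\alpha=1$ is used. The same manipulation for $\mathcal{T}_\alpha$ gives
$$\norm{\mathcal{T}_\alpha(t)x}\le \alpha M\norm{x}\int_0^\infty \tau\,\xi_\alpha(\tau)\,d\tau,$$
so the claimed bound $M/\Gamma(\alpha)$ follows once the first moment is identified as $\int_0^\infty \tau\,\xi_\alpha(\tau)\,d\tau = 1/\Gamma(1+\alpha)$. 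This moment identity is the one nonelementary ingredient: it is the case $r=1$ of the standard formula $\int_0^\infty \tau^r\xi_\alpha(\tau)\,d\tau = \Gamma(1+r)/\Gamma(1+\alpha r)$ for the Wright-type density, which I would cite from the fractional-calculus references rather than reprove. Substituting yields $\alpha M\norm{x}\cdot 1/(\alpha\Gamma(\alpha)) = (M/\Gamma(\alpha))\norm{x}$.

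For the strong continuity in (ii), fix $x\in X$ and $t_0\ge 0$ and let $t\to t_0$. I would write
$$\norm{\mathcal{S}_\alpha(t)x-\mathcal{S}_\alpha(t_0)x}\le \int_0^\infty \xi_\alpha(\tau)\,\norm{\bigl(T(t^\alpha\tau)-T(t_0^\alpha\tau)\bigr)x}\,d\tau.$$
By strong continuity of the semigroup together with continuity of $t\mapsto t^\alpha$, the integrand tends to $0$ pointwise in $\tau$, while it is dominated by the integrable function $2M\norm{x}\,\xi_\alpha(\tau)$; the dominated convergence theorem forces the integral to $0$. The identical argument, now with dominating function $2\alpha M\norm{x}\,\tau\xi_\alpha(\tau)$ (integrable by the first-moment identity), handles $\mathcal{T}_\alpha$. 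The case $t_0=0$ is covered by the same estimate upon noting $T(0)=I$, so that $\mathcal{S}_\alpha(0)=I$ and right-continuity holds there.

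I expect the only genuine obstacle to be the analytic bookkeeping around the Bochner integral, namely securing integrable dominating functions so that the norm estimates and the passage to the limit are justified; this hinges entirely on the finiteness and exact value of $\int_0^\infty \tau\,\xi_\alpha(\tau)\,d\tau$, after which both parts are routine applications of dominated convergence.
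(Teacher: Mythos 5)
Your proposal is correct, and it is exactly the standard argument that this lemma rests on in the fractional evolution equation literature (the paper itself states the lemma without proof): linearity and the bounds follow from $\int_0^\infty \xi_\alpha(\tau)\,d\tau = 1$ together with the moment identity $\int_0^\infty \tau\,\xi_\alpha(\tau)\,d\tau = 1/\Gamma(1+\alpha)$, which gives $\alpha M/\Gamma(1+\alpha) = M/\Gamma(\alpha)$, and strong continuity follows by dominated convergence with the dominating functions you name. Your handling of $t_0 = 0$ via right-continuity of the semigroup and your remark that the first moment's finiteness is what licenses both the norm estimate and the limit passage for $\mathcal{T}_\alpha$ are precisely the points that make the argument complete, so nothing is missing.
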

\subsection{Multivalued Maps}
We briefly discuss some basic results on multivalued maps, and refer the reader to books \cite{deimling2011multivalued} and  \cite{kamenskii2011condensing}
for more in-depth coverage.
\begin{definition}
	A multivalued map $\mathcal{F}:X\multimap Y$ is said to be convex  valued if the image of multivalued map $F$ is convex.
\end{definition}
\begin{definition}
   A multivalued map $\mathcal{F}:X\multimap Y$ is said to be bounded if it maps bounded sets in $X$ into bounded sets in $Y$. 
\end{definition}
The definitions provided below serve to explain the continuity and measurability of multivalued maps.

\begin{definition}\label{Def22}
A multivalued map $\mathcal{F}: X\multimap Y$ is a   upper semicontinuous at the point $x\in \text{Dom}(\mathcal{F})$ if and only if for any neighbourhood $\mathcal{U}$ of $\mathcal{F}(x)$, there exists $\eta >0$  such that $\forall$ $x^{\prime}\in B_{X}(x,\eta)$, $\mathcal{F}(x^{\prime})\subset \mathcal{U}$.
\end{definition}

\begin{remark}\label{rem1}
    when $\mathcal{F}(x)$ is compact, $\mathcal{F}$ is upper semicontinuous at $x$ if and only if $\forall ~ \epsilon,$ there exists $\eta >0$ such that $\forall ~ x^{\prime}\in B_{X}(x,\eta),$ $\mathcal{F}(x^{\prime})\subset B_{Y}(\mathcal{F}(x),\epsilon)$ 
\end{remark}

A corresponding continuous function can be established on weakly compact subsets.
\begin{definition}\label{dww}\cite{kumbhakar2025p}
	Let $X$ be a normed space. A multivalued map $\mathcal{F}: X\multimap X$ is said to be weakly weakly upper semicontinuous if for every weakly closed set $V\subset X$, the set
	\begin{equation}
		\{x\in X: \mathcal{F}(x)\cap V\neq \phi\},
	\end{equation}
	is sequentially weakly closed.
\end{definition}
The characterization below has been formulated in \cite{kumbhakar2025p}.
\begin{lemma}
	A multivalued map $\mathcal{F}: X\multimap X$ is weakly weakly upper semicontinuous at $x$ if for every sequence $\{x_n\}_{n\in \mathbb{N}}$ in $X$ that converges weakly to $x$, and every $y_n\in \mathcal{F}(x_n)$ for $n\in \mathbb{N}$ there exists a subsequence $\{y_{n_k}\}_{k\in \mathbb{N}}$ weakly converging to $y\in \mathcal{F}(x)$.
\end{lemma}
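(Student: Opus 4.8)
The plan is to argue directly from Definition~\ref{dww}, showing that the stated sequential criterion forces the topological condition defining weak-weak upper semicontinuity. Concretely, fix a weakly closed set $V\subset X$ and set $\mathcal{G}_V:=\{z\in X:\mathcal{F}(z)\cap V\neq\phi\}$. To establish the property at $x$, I must show that $\mathcal{G}_V$ is sequentially weakly closed at $x$: whenever $\{x_n\}$ is a sequence with each $x_n\in\mathcal{G}_V$ and $x_n\rightharpoonup x$, then $x\in\mathcal{G}_V$, i.e. $\mathcal{F}(x)\cap V\neq\phi$.

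First I would extract selections. Since each $x_n\in\mathcal{G}_V$, for every $n$ I may choose $y_n\in\mathcal{F}(x_n)\cap V$. The hypothesis of the lemma now applies to the weakly convergent sequence $\{x_n\}$ together with the selections $y_n\in\mathcal{F}(x_n)$: it produces a subsequence $\{y_{n_k}\}$ and a point $y\in\mathcal{F}(x)$ with $y_{n_k}\rightharpoonup y$. This is the step that supplies the required weak-sequential-compactness of the images, and it is where the full strength of the assumption is consumed; the remainder is bookkeeping.

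The only point deserving care is upgrading the weak closedness of $V$ to weak sequential closedness, so as to conclude $y\in V$. This is automatic: the complement $X\setminus V$ is weakly open, so if $y$ lay in it there would be a weak neighbourhood of $y$ contained in $X\setminus V$; but $y_{n_k}\rightharpoonup y$ would force $y_{n_k}$ into that neighbourhood for all large $k$, contradicting $y_{n_k}\in V$. Hence $y\in V$. Combined with $y\in\mathcal{F}(x)$ this gives $y\in\mathcal{F}(x)\cap V$, so $\mathcal{F}(x)\cap V\neq\phi$ and $x\in\mathcal{G}_V$. As $V$ was an arbitrary weakly closed set, $\mathcal{F}$ is weakly weakly upper semicontinuous at $x$.

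I do not expect a genuine obstacle here; the argument is essentially a chase through the definitions, with the sequential hypothesis doing the real work in place of a compactness argument. If I had to isolate the one subtle passage, it is the implication ``weakly closed $\Rightarrow$ weakly sequentially closed'' used above, which I would either justify inline as indicated or simply invoke as a standard fact about the weak topology.
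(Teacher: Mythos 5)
Your proposal is correct: the selection step ($y_n\in\mathcal{F}(x_n)\cap V$), the application of the sequential hypothesis to extract $y_{n_k}\rightharpoonup y\in\mathcal{F}(x)$, and the upgrade ``weakly closed $\Rightarrow$ weakly sequentially closed'' (which holds in any topology and is justified exactly as you indicate) together give $y\in\mathcal{F}(x)\cap V$, which is all that Definition~\ref{dww} demands. The paper itself supplies no proof of this lemma --- it is quoted from \cite{kumbhakar2025p} --- and your definition-chase is precisely the standard argument one would expect there, so there is nothing of substance to compare beyond noting agreement.
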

We offer an equivalent definition of weakly upper semicontinuity based on weakly open sets (refer to \cite{kumbhakar2025p})
\begin{lemma}
	Let $X$ be a normed space. If a multivalued map $\mathcal{F}: X\multimap X$ is weakly weakly upper semicontinuous, then for every $x_0\in X$ and for every weakly open set $V\subset X$ containing $\mathcal{F}(x_0)$ there exists an weakly open set $O_X$ containing $x_0$ such that
	\begin{equation}
		\mathcal{F}(x)\subset V ~\forall x\in O_X.
	\end{equation}
\end{lemma}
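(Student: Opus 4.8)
The plan is to argue by contradiction, transferring the statement about the weakly open set $V$ to a statement about its weakly closed complement and then invoking the sequential weak closedness furnished by Definition \ref{dww}. First I would set $W := X \setminus V$, which is weakly closed because $V$ is weakly open, and consider
\[
  A := \{\, x \in X : \mathcal{F}(x) \cap W \neq \emptyset \,\}.
\]
Since $W$ is weakly closed, Definition \ref{dww} tells me that $A$ is sequentially weakly closed. Moreover, the hypothesis $\mathcal{F}(x_0) \subseteq V$ is exactly the assertion $x_0 \notin A$, and the desired conclusion --- a weakly open $O_X \ni x_0$ with $\mathcal{F}(x) \subseteq V$ for all $x \in O_X$ --- is equivalent to finding a weakly open $O_X \ni x_0$ with $O_X \cap A = \emptyset$, i.e. to showing that $x_0$ lies in the weak interior of $X \setminus A$.

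For the contradiction I would assume that no such $O_X$ exists, so that every weakly open neighborhood of $x_0$ meets $A$. From this I would produce a sequence $\{x_n\} \subseteq A$ converging weakly to $x_0$. Because $A$ is sequentially weakly closed, the weak limit $x_0$ must then belong to $A$, contradicting $x_0 \notin A$. This contradiction delivers the required neighborhood $O_X$, and the verification that $O_X \cap A = \emptyset$ yields $\mathcal{F}(x) \subseteq V$ for every $x \in O_X$ is immediate from the definition of $A$. As an alternative that makes the role of the selections explicit, one could instead choose $y_n \in \mathcal{F}(x_n) \cap W$, apply the sequential characterization of the preceding lemma to extract a subsequence $y_{n_k} \rightharpoonup y \in \mathcal{F}(x_0)$, and use the weak closedness of $W$ to force $y \in \mathcal{F}(x_0) \cap W$, again contradicting $\mathcal{F}(x_0) \subseteq V$.

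The main obstacle is the extraction of the weakly convergent sequence $x_n \rightharpoonup x_0$ with $x_n \in A$. The failure of $O_X$ to exist only says that $x_0$ lies in the topological weak closure of $A$, whereas Definition \ref{dww} supplies information about the sequential weak closure; in a general normed space the weak topology is not metrizable, so these two closures need not coincide, and the passage from one to the other is precisely the delicate point. I would close this gap by exploiting the standing hypotheses on the ambient space: in a separable reflexive Banach space the dual is separable, so the weak topology is metrizable on bounded sets and bounded sets are relatively weakly sequentially compact by the Eberlein--\v{S}mulian theorem; this provides a countable weak neighborhood base at $x_0$ along bounded sets from which the sequence $\{x_n\}$ can be selected. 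Once this sequence is secured, the remaining steps are routine and the argument concludes as above.
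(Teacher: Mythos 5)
You have set the problem up correctly: with $W=X\setminus V$ weakly closed, Definition \ref{dww} gives that $A=\{x\in X:\mathcal{F}(x)\cap W\neq\emptyset\}$ is sequentially weakly closed, $x_0\notin A$, and the lemma amounts to showing $x_0$ lies in the weak interior of $X\setminus A$ (the natural candidate being $O_X=X\setminus A$ itself). You have also correctly located the crux: Definition \ref{dww} controls the \emph{sequential} weak closure of $A$, while the conclusion needs its \emph{topological} weak closure. The gap is your proposed repair of this step, which does not work. If no weakly open $O_X$ exists, you only know that every weakly open neighborhood of $x_0$ meets $A$; basic weak neighborhoods are unbounded (they contain finite-codimensional affine subspaces), so the points of $A$ they contain may have arbitrarily large norm, and there is no bounded set along which $A$ clusters at $x_0$. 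Hence metrizability of the weak topology on bounded sets, Eberlein--\v{S}mulian, or a ``countable neighborhood base along bounded sets'' cannot manufacture the sequence $x_n\rightharpoonup x_0$ with $x_n\in A$: the weak topology on the whole space is not first countable in infinite dimensions, and the obstruction is genuine rather than technical. In $\ell^2$ the set $S=\{\sqrt{n}\,e_n:n\in\mathbb{N}\}$ is weakly sequentially closed (a weakly convergent sequence in $S$ is norm bounded, hence takes finitely many values, hence its limit lies in $S$), yet $0$ belongs to the weak closure of $S$ and is the weak limit of \emph{no} sequence from $S$. So sequential weak closedness of $A$ simply does not imply that $X\setminus A$ is weakly open, and no extraction argument of the kind you sketch can succeed at this level of generality. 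Your alternative route via selections $y_n\in\mathcal{F}(x_n)\cap W$ presupposes the same sequence $\{x_n\}$ and inherits the identical gap.

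Note additionally that the lemma is stated for a bare normed space, so the separability and reflexivity you invoke are not hypotheses available here; and, as the example shows, even granting them the patch fails. What rescues the statement in practice is that every application in this paper takes place on bounded, weakly compact sets (such as $Q_F^{N_0}\subset L^2(I,X)$ in Theorem \ref{MR}), where the weak topology is metrizable (the dual being separable in the separable reflexive setting); there, sequentially weakly closed sets are weakly closed, and your first paragraph already completes the proof: $O_X=X\setminus A$ is weakly open relative to the set in question, contains $x_0$, and $\mathcal{F}(x)\subset V$ for all $x\in O_X$ directly from the definition of $A$. A correct write-up must therefore either add a boundedness or metrizability hypothesis (equivalently, work relative to the weakly compact set on which $\mathcal{F}$ is considered), or replace sequential weak closedness in Definition \ref{dww} by topological weak closedness; as it stands, the step producing $x_n\rightharpoonup x_0$ with $x_n\in A$ is unjustified and cannot be repaired by the argument you propose.
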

The definition of a weakly closed graph associated with a multivalued map is introduced at this stage.
\begin{definition}\cite{kumbhakar2025p}
	Let $\mathcal{F}: X\multimap X$ be a multivalued map. We say that $\mathcal{F}$ has weakly closed graph if for every sequence $\{x_n\}_{n\in \mathbb{N}}, \{y_n\}_{n\in \mathbb{N}}$ in $X$ with $x_n\rightharpoonup x$, $y_n\rightharpoonup y$ in $X$ and $y_n\in \mathcal{F}(x_n)$ for all $n\in \mathbb{N}$, then $y\in \mathcal{F}(x)$.
\end{definition}
Now we can continue with the definition of multivalued measurable functions. Let $X$ be  a separable reflexive Banach space , and let $(\Omega, \Sigma)$ be a measurable space.\\

\begin{definition}
	A multivalued map $\mathcal{F}:\Omega\multimap X$ is said to be measurable if, for each $x\in \Omega$, the set $\mathcal{F}(x)$ is nonempty, closed and for every $z\in X$, the $\mathbb{R}^+$- valued function
	\begin{equation*}
		x\to d(z,\mathcal{F}(x))=\inf\{d(z,w):w\in \mathcal{F}(x)\}
	\end{equation*} 
	is measurable.
	This is equivalent to saying that for every $O_X\subset X$ open, the set 
	\begin{equation*}
		\{w\in \Omega: \mathcal{F}(w)\cap O_X\neq \phi\}\in \Sigma,
	\end{equation*}
	or that there exists a sequence $\{f_n\}_{n\in \mathbb{N}}$ of measurable functions $f_n:\Omega\to X$ such that 
	\begin{equation*}
		\mathcal{F}(w)=\overline{\{f_n(w)\}_{n\ge 1}}, \forall w\in \Omega.
	\end{equation*}
\end{definition}
A Borel $\sigma$- algebra of subsets of $X$ is denoted by $\mathcal{B}(X)$.
\begin{definition}
	We say that the multivalued mapping $\mathcal{F}: \Omega\times X\multimap X$ is $\Sigma\otimes \mathcal{B}(X)$ measurable if
	\begin{equation*}
		\mathcal{F}^{-1}(C_X)=\{(w,x)\in \Omega\times X: \mathcal{F} (w,x)\cap C_X\neq \phi\}\in \Sigma\otimes \mathcal{B}(X),
	\end{equation*}
	for any closed set $C_X\subset X$.\\
\end{definition} 
\begin{proposition}(\cite{MR2024162},proposition 4.1.16)\label{prop2.10}
    If $\mathcal{F}:X\multimap Y$ is closed and locally compact (that is, for every $x\in X$, we can find $U\in \mathcal{N}(x)$ such that $\overline{\mathcal{F}(U)}$ is compact), then $\mathcal{F}$ is upper semicontinuous.
\end{proposition}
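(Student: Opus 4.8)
The plan is to argue by contradiction, combining the closed-graph hypothesis with local compactness through a single sequential argument. This is legitimate because $X$ (and $Y$) are normed spaces, so the ball-based condition in Definition \ref{Def22} can be tested along sequences, and in a metric setting compactness coincides with sequential compactness.

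First I would fix $x_0\in X$ and an open set $\mathcal{U}\subseteq Y$ with $\mathcal{F}(x_0)\subset\mathcal{U}$ (a general neighbourhood may be replaced by its interior, which is harmless for upper semicontinuity), and suppose toward a contradiction that $\mathcal{F}$ is not upper semicontinuous at $x_0$. Negating Definition \ref{Def22}, for every $\eta>0$ there is a point of $B_X(x_0,\eta)$ whose image is not contained in $\mathcal{U}$; taking $\eta=1/n$ produces a sequence $x_n\to x_0$ together with points $y_n\in\mathcal{F}(x_n)$ satisfying $y_n\notin\mathcal{U}$. Next I would invoke local compactness: there is a neighbourhood $U\in\mathcal{N}(x_0)$ with $\overline{\mathcal{F}(U)}$ compact. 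Since $x_n\to x_0$, all but finitely many $x_n$ lie in $U$, so the corresponding $y_n$ lie in the compact set $\overline{\mathcal{F}(U)}$, and I may extract a subsequence $y_{n_k}\to y_0$ while $x_{n_k}\to x_0$.

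The final step uses closedness of the graph. Because $y_{n_k}\in\mathcal{F}(x_{n_k})$ with $x_{n_k}\to x_0$ and $y_{n_k}\to y_0$, the closed-graph hypothesis forces $y_0\in\mathcal{F}(x_0)$, and hence $y_0\in\mathcal{U}$. On the other hand, each $y_{n_k}$ lies in the closed set $Y\setminus\mathcal{U}$, so the limit satisfies $y_0\in Y\setminus\mathcal{U}$ as well, which is a contradiction. Consequently no such $\mathcal{U}$ can exist, and $\mathcal{F}$ is upper semicontinuous at every $x_0\in X$.

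I expect the only genuinely delicate point to be the coordinated use of the two hypotheses in one sequence: local compactness is precisely what prevents the escaping images $y_n$ from running off to infinity, guaranteeing a cluster point, while the closed graph is what pins that cluster point back inside $\mathcal{F}(x_0)$. Neither hypothesis alone suffices. One should also be attentive that \emph{closed} here means closed graph rather than merely closed values, and that the extraction only requires $x_n$ to enter the fixed neighbourhood $U$, not that $U$ itself be compact.
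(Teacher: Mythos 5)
Your proof is correct; note that the paper itself gives no proof of Proposition \ref{prop2.10} (it is quoted from \cite{MR2024162}), and your argument is exactly the standard one behind that citation: negate Definition \ref{Def22} to obtain $x_n\to x_0$ with $y_n\in\mathcal{F}(x_n)\setminus\mathcal{U}$, use local compactness to extract $y_{n_k}\to y_0$ inside the compact set $\overline{\mathcal{F}(U)}$, and use the closed graph to force $y_0\in\mathcal{F}(x_0)\subset\mathcal{U}$ while $y_0$ lies in the closed set $Y\setminus\mathcal{U}$. Your cautionary remarks are also the right ones — in the paper's normed-space setting sequential compactness and sequential graph-closedness suffice (in a general topological space one would pass to nets), ``closed'' indeed means closed graph rather than closed values, and only the tail of $\{x_n\}$ needs to enter the neighbourhood $U$ — so nothing is missing.
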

Readers seeking further knowledge on multivalued maps may consult the books \cite{papageorgiou1997handbook, MR755330} and the sources referenced within.\\


\subsection{Compactness in the space $L^p$.}
This section recalls some compactness results in the space $L^p(I, X)$, $1<p<\infty$. We start with the well-known Banach-Alouglu theorem.
\begin{theorem}[Banach-Alaoglu theorem]
	Let $X$ be a Banach space, then the closed unit ball in $X^*$ is weak$^*$-compact.
\end{theorem}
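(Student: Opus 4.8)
The plan is to realize the closed unit ball $B^{*}=\{f\in X^{*}:\norm{f}\le 1\}$ as a closed subset of a compact product space and then invoke Tychonoff's theorem, which is the essential ingredient. Let $\mathbb{K}$ denote the scalar field. First I would observe that for every $f\in B^{*}$ and every $x\in X$ one has $\abs{\langle f,x\rangle_{X}}\le\norm{x}$, so the scalar $\langle f,x\rangle_{X}$ lies in the compact disk $D_{x}=\{z\in\mathbb{K}:\abs{z}\le\norm{x}\}$. This motivates the embedding $\Phi:B^{*}\to P$, where $P=\prod_{x\in X}D_{x}$ carries the product topology, given by $\Phi(f)=(\langle f,x\rangle_{X})_{x\in X}$.

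By Tychonoff's theorem, the product $P$ of the compact sets $D_{x}$ is compact in the product topology. The key point is that $\Phi$ is a homeomorphism of $B^{*}$, equipped with its weak$^{*}$ topology, onto its image $\Phi(B^{*})$ equipped with the subspace topology from $P$: both are precisely the topology of pointwise convergence on $X$, since a net $f_{\lambda}\to f$ weakly$^{*}$ exactly when $\langle f_{\lambda},x\rangle_{X}\to\langle f,x\rangle_{X}$ for every $x\in X$, which is the defining condition for convergence in $P$. Hence it suffices to prove that $\Phi(B^{*})$ is a closed subset of the compact space $P$, for then it is compact and so is $B^{*}$.

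The main obstacle, and the technical heart of the argument, is to characterize $\Phi(B^{*})$ as a closed set. A point $g=(g_{x})_{x\in X}\in P$ lies in $\Phi(B^{*})$ exactly when the assignment $x\mapsto g_{x}$ is linear, that is $g_{\alpha x+\beta y}=\alpha g_{x}+\beta g_{y}$ for all scalars $\alpha,\beta\in\mathbb{K}$ and all $x,y\in X$, the bound $\norm{\cdot}\le 1$ being automatic from $g_{x}\in D_{x}$. For each fixed quadruple $(\alpha,\beta,x,y)$ the map $g\mapsto g_{\alpha x+\beta y}-\alpha g_{x}-\beta g_{y}$ depends on only finitely many coordinates and is therefore continuous on $P$, so its zero set is closed; intersecting these zero sets over all admissible $(\alpha,\beta,x,y)$ exhibits $\Phi(B^{*})$ as an intersection of closed sets, hence closed.

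Finally, a closed subset of a compact space is compact, so $\Phi(B^{*})$ is compact, and transporting this back through the homeomorphism $\Phi$ shows that $B^{*}$ is weak$^{*}$-compact, which completes the proof. I expect no serious difficulty beyond the verification that the embedding intertwines the weak$^{*}$ and product topologies and that linearity is a closed condition; the only subtlety is the appeal to Tychonoff's theorem, which is unavoidable and supplies the compactness of $P$.
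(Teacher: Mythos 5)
Your proof is correct: it is the standard argument via the embedding $f\mapsto(\langle f,x\rangle_X)_{x\in X}$ into $\prod_{x\in X}\{z:\abs{z}\le\norm{x}\}$, Tychonoff's theorem, and the observation that linearity is a pointwise-closed condition, and each step (the homeomorphism onto the image under the topology of pointwise convergence, the closedness of the image, the automatic bound $\norm{g}\le 1$) is justified. Note that the paper states the Banach--Alaoglu theorem only as a recalled classical fact in its preliminaries and gives no proof, so there is no alternative argument in the paper to compare against; yours is the canonical one.
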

The following Theorem characterizes reflexive Banach spaces. 
\begin{theorem}\label{DP}
	Let $X$ be a Banach space. Then $X$ is reflexive if and only if every bounded sequence has a weakly convergent subsequence.
\end{theorem}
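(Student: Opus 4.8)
The statement is the combination of Kakutani's characterization of reflexivity with the Eberlein--\v{S}mulian theorem, so the plan is to treat the two implications separately and by rather different tools.

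For the forward implication --- $X$ reflexive $\Rightarrow$ every bounded sequence $\{x_n\}$ has a weakly convergent subsequence --- the plan is to reduce to a separable setting, where the Banach--Alaoglu theorem stated above becomes effective through metrizability. Set $Y=\overline{\operatorname{span}}\{x_n\}$. Then $Y$ is a closed subspace of a reflexive space, hence reflexive, and it is separable by construction; a standard duality argument (using that $Y^{**}=Y$ is separable together with the fact that separability of $Z^*$ forces separability of $Z$, applied to $Z=Y^*$) shows $Y^*$ is separable. Consequently the closed unit ball of $Y^{**}=(Y^*)^*$ is weak$^*$-compact by Banach--Alaoglu and weak$^*$-metrizable because its predual $Y^*$ is separable, so the bounded sequence $\{x_n\}$, viewed inside $Y^{**}$, has a weak$^*$-convergent subsequence. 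Since $Y$ is reflexive, weak$^*$ convergence in $Y^{**}$ coincides with weak convergence in $Y$, and as every $f\in X^*$ restricts to an element of $Y^*$ while every element of $Y^*$ extends by Hahn--Banach, weak convergence in $Y$ upgrades to weak convergence in $X$. This settles the direction.

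The reverse implication --- weak sequential compactness of bounded sets $\Rightarrow$ reflexivity --- is the substantive half. By Kakutani's theorem it is enough to prove that the canonical embedding $J\colon X\to X^{**}$ is surjective, and I would argue by contradiction: assume $X$ is not reflexive, so (as $J$ is an isometry and $X$ is complete) $J(X)$ is a proper norm-closed subspace of $X^{**}$, and fix $z^{**}\in B_{X^{**}}$ with $d:=\operatorname{dist}(z^{**},J(X))>0$; choose $\theta\in(0,d)$. The core is an inductive Helly-type construction producing $\{x_n\}\subset B_X$ and $\{f_n\}\subset B_{X^*}$ with
\[
 f_n(x_m)=\theta \ \ (m\ge n),\qquad f_n(x_m)=0 \ \ (m<n),\qquad z^{**}(f_n)\ge\theta .
\]
At each stage $f_{n+1}$ is chosen in the annihilator of $\{x_1,\dots,x_n\}$ with $z^{**}(f_{n+1})\ge\theta$ --- possible since $\sup\{z^{**}(f):\norm{f}\le 1,\ f\in\{x_1,\dots,x_n\}^{\perp}\}=\operatorname{dist}(z^{**},J(\operatorname{span}\{x_1,\dots,x_n\}))\ge d>\theta$ for the finite-dimensional span --- and then $x_{n+1}\in B_X$ is chosen by Goldstine's weak$^*$-density theorem to match $z^{**}$ on $f_1,\dots,f_{n+1}$ up to a small error. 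Granting such a sequence, no subsequence can converge weakly: if $x_{n_k}\rightharpoonup x$, then $f_n(x)=\theta$ for every $n$, while Mazur's theorem gives a finite convex combination $\sum\lambda_k x_{n_k}$ within $\varepsilon$ of $x$ in norm; applying $f_{N+1}$ with $N$ the largest occurring index gives $\abs{f_{N+1}(x)-f_{N+1}(\sum\lambda_k x_{n_k})}=\theta$, contradicting $\theta\le\norm{f_{N+1}}\,\norm{x-\sum\lambda_k x_{n_k}}<\varepsilon$ once $\varepsilon<\theta$. This contradicts the hypothesis and forces $X$ to be reflexive.

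The \emph{main obstacle} is the reverse direction, and within it the simultaneous inductive selection of $\{x_n\}$ and $\{f_n\}$ with the exact biorthogonal profile above: keeping $f_{n+1}$ in the annihilator of the earlier $x_i$ while retaining $z^{**}(f_{n+1})\ge\theta$ relies on finite-dimensionality of the spans through the distance formula, and matching $z^{**}$ on the growing family of functionals relies on Goldstine's theorem; interleaving these two requirements correctly, and then extracting the contradiction via Mazur's convexity theorem, is the technical heart --- essentially the content of the Eberlein--\v{S}mulian theorem.
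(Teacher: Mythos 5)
The paper contains no proof of Theorem \ref{DP} to compare against: it is recorded in the preliminaries as a classical fact (Kakutani's reflexivity criterion combined with the Eberlein--\v{S}mulian theorem), and only its consequence, Corollary \ref{BA}, is used downstream. Measured against the standard literature proof, your outline is essentially correct and follows the canonical route: the forward direction via reduction to the separable closed span $Y$, separability of $Y^*$ (from $Y^{**}\cong Y$ separable), weak$^*$-metrizability of $B_{Y^{**}}$ plus Banach--Alaoglu, and the passage from weak convergence in $Y$ to weak convergence in $X$ by restriction of functionals, is complete as written; the reverse direction via the Helly-type biorthogonal construction and Mazur's theorem is the right argument.

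One detail in the reverse direction needs repair, though it is not a genuine gap. Goldstine's theorem only lets you match $z^{**}$ on $f_1,\dots,f_{n+1}$ to within $\varepsilon$, so the exact equalities $f_n(x_m)=\theta$ for $m\ge n$ in your displayed profile are not attainable; your own phrase ``up to a small error'' is in tension with the display. What the construction actually delivers is exact vanishing $f_n(x_m)=0$ for $m<n$ (exact because $f_{n+1}$ is chosen in the annihilator of the already-constructed $x_1,\dots,x_n$, using the distance formula for the weak$^*$-closed finite-dimensional span) together with, say, $f_n(x_m)\ge \theta/2$ for $m\ge n$ (from $z^{**}(f_n)\ge\theta$ and Goldstine error below $\theta/2$). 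The Mazur contradiction survives verbatim: if $x_{n_k}\rightharpoonup x$ then $f_n(x)\ge\theta/2$ for every $n$, while $f_{N+1}$ annihilates the convex combination $\sum_k\lambda_k x_{n_k}$ exactly when $N$ is the largest index used, so $\theta/2\le\abs{f_{N+1}(x)}\le\norm{f_{N+1}}\,\norm{x-\sum_k\lambda_k x_{n_k}}<\varepsilon$, absurd for $\varepsilon<\theta/2$. With this adjustment your proof is complete and supplies exactly the fact the paper invokes without proof.
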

Provided $X$ is reflexive, the space $L^p(I, X)$ is also reflexive for $1<p<\infty$, which gives rise to the next corollary. 
 
\begin{corollary}\label{BA}
	Let $X$ be reflexive. Then, every bounded sequence in the space $L^p(I, X)$ admits a weakly convergent subsequence.
\end{corollary}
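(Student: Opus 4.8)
The plan is to deduce the corollary directly from Theorem~\ref{DP}, applying that characterization of reflexivity not to $X$ but to the Bochner space $L^p(I,X)$ itself. The entire argument therefore reduces to the single assertion that $L^p(I,X)$ is reflexive whenever $X$ is reflexive and $1<p<\infty$; once this is in hand, Theorem~\ref{DP} immediately yields that every bounded sequence in $L^p(I,X)$ possesses a weakly convergent subsequence.

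To establish reflexivity of $L^p(I,X)$, the key step I would carry out is the identification of its dual. Since $X$ is reflexive, both $X$ and $X^*$ enjoy the Radon--Nikodym property, which is precisely the hypothesis needed for the Riesz-type representation of the dual of a vector-valued $L^p$ space. Concretely, writing $q$ for the conjugate exponent defined by $\frac1p+\frac1q=1$, I would show that the pairing
\begin{equation*}
	\langle f, g\rangle = \int_I \langle f(t), g(t)\rangle_X\, dt, \qquad f\in L^p(I,X),\ g\in L^q(I,X^*),
\end{equation*}
induces an isometric isomorphism $\bigl(L^p(I,X)\bigr)^* \cong L^q(I,X^*)$.

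Iterating this identification gives the reflexivity. Applying the duality once more to $L^q(I,X^*)$ --- now using that $X^*$ has the Radon--Nikodym property --- and invoking $X^{**}=X$, I obtain
\begin{equation*}
	\bigl(L^p(I,X)\bigr)^{**} \cong \bigl(L^q(I,X^*)\bigr)^* \cong L^p(I,X^{**}) \cong L^p(I,X),
\end{equation*}
and one checks that this chain is implemented by the canonical embedding, so $L^p(I,X)$ is reflexive. Theorem~\ref{DP} then finishes the proof.

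The main obstacle is the duality identification $\bigl(L^p(I,X)\bigr)^* \cong L^q(I,X^*)$: this representation can fail for general Banach spaces and genuinely requires the Radon--Nikodym property of $X^*$, so the reflexivity hypothesis on $X$ is doing real work rather than being a cosmetic assumption; everything else is bookkeeping with canonical embeddings. Alternatively, to bypass the Radon--Nikodym machinery, I could invoke the Eberlein--\v{S}mulian theorem, which asserts that in any Banach space weak compactness and weak sequential compactness of bounded sets coincide; combined with the relative weak compactness of bounded sets in a reflexive space, this also delivers the claim, again contingent on the reflexivity of $L^p(I,X)$.
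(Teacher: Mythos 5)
Your proposal is correct and follows essentially the same route as the paper: the paper likewise notes that $L^p(I,X)$ is reflexive whenever $X$ is reflexive and $1<p<\infty$, and then invokes Theorem~\ref{DP} to extract a weakly convergent subsequence. The only difference is that the paper cites the reflexivity of $L^p(I,X)$ as a known fact, whereas you additionally sketch its standard justification via the Radon--Nikodym property and the duality $\bigl(L^p(I,X)\bigr)^*\cong L^{p'}(I,X^*)$, which is a correct (and welcome) filling-in of that step.
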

\begin{theorem}\label{2.13}
    In a completely metrizable locally convex space, the closed convex hull of a compact
set is compact.
\end{theorem}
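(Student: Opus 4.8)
The plan is to use the metric characterization of compactness: in a complete metric space, a closed subset is compact if and only if it is totally bounded. Since the ambient space is completely metrizable (hence complete in a translation-invariant metric inducing its topology) and the closed convex hull $\overline{\mathrm{conv}}(K)$ is closed by construction, it suffices to prove that $\mathrm{conv}(K)$ is totally bounded; its closure will then be a closed, totally bounded subset of a complete space, and therefore compact. Here total boundedness is understood in the vector-topological sense: for every neighborhood $U$ of the origin there is a finite set $S$ with $\mathrm{conv}(K)\subset S+U$.

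To establish this, I would fix an arbitrary neighborhood $U$ of the origin and, using local convexity, choose a convex balanced neighborhood $V$ of $0$ with $V+V\subset U$. Because $K$ is compact it is totally bounded, so there is a finite set $F=\{x_1,\dots,x_n\}\subset K$ with $K\subset F+V$. The decisive finite-dimensional input is that the convex hull $\mathrm{conv}(F)$ of a finite set is compact (it is the continuous image of a standard simplex), hence totally bounded; thus $\mathrm{conv}(F)\subset\{y_1,\dots,y_m\}+V$ for some finite collection of points $y_1,\dots,y_m$.

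The key geometric observation is that passing to convex hulls is harmless here: from $K\subset\mathrm{conv}(F)+V$ and the fact that $\mathrm{conv}(F)+V$ is convex (being a sum of two convex sets), one obtains
\begin{equation*}
\mathrm{conv}(K)\subset\mathrm{conv}\bigl(\mathrm{conv}(F)+V\bigr)=\mathrm{conv}(F)+V\subset\bigcup_{j=1}^{m}\bigl(y_j+V+V\bigr)\subset\bigcup_{j=1}^{m}\bigl(y_j+U\bigr).
\end{equation*}
Hence $\mathrm{conv}(K)$ is covered by finitely many translates of $U$, so it is totally bounded, and the proof concludes via the completeness argument above.

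I expect the main obstacle to be the reduction to finite dimensions: the entire argument hinges on replacing the compact set $K$ by the convex hull of a finite $V$-net and invoking compactness of the convex hull of finitely many points. Verifying carefully that $\mathrm{conv}(F)+V$ is convex, and that taking the convex hull does not enlarge the cover beyond two copies of $V$, is the technical heart of the proof; everything else (the existence of a translation-invariant complete metric, closedness of the closed convex hull, and the equivalence of metric and vector-topological total boundedness) is routine once this covering estimate is in hand.
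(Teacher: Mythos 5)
Your proof is correct. Note, however, that the paper offers no proof of Theorem \ref{2.13} at all: it is stated as a classical preliminary fact (it is essentially Theorem 3.20(c) in Rudin's \emph{Functional Analysis}) and is only invoked later, in Step (ii) of the existence proof, to conclude that $\mathcal{Q}_n=\overline{conv}\,\Lambda_n(B^{N_0}_n)$ is compact. So there is no paper argument to compare against; what you have written is precisely the standard textbook route the paper implicitly relies on. The covering estimate at the heart of your argument is sound: with $V$ convex and $V+V\subset U$, from $K\subset F+V$ one gets $K\subset \mathrm{conv}(F)+V$, the set $\mathrm{conv}(F)+V$ is convex as a sum of convex sets, hence $\mathrm{conv}(K)\subset \mathrm{conv}(F)+V$, and total boundedness of the compact simplex image $\mathrm{conv}(F)$ finishes the cover by translates of $U$. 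Two points you label routine deserve one line each if this were written out in full: first, that a completely metrizable topological vector space is complete with respect to a translation-invariant compatible metric (Cauchy sequences for any invariant compatible metric coincide with Cauchy sequences of the canonical TVS uniformity, so completeness transfers from whichever metric witnesses complete metrizability); second, that the closure of a totally bounded set is totally bounded, which is what lets you pass from $\mathrm{conv}(K)$ to $\overline{\mathrm{conv}}(K)$ before applying the compactness criterion. Neither is a gap, and your remark that completeness is essential is apt --- the statement fails in incomplete normed spaces, which is exactly why the hypothesis ``completely metrizable'' appears in the theorem.
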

  The idea of uniform integrability is introduced as a tool for deriving compactness results in the space $L^1(I,X)$.
\begin{definition}
A sequence $\{f_{n}\}_{n\in \mathbb{N}}\subset L^1(I,X)$ is called uniformly integrable if for every $\epsilon >0$ there is a $\delta(\epsilon)>0$ such that $\int _{E}\norm{f_{n}(s)}ds$ for every measurable subset $E\subset I$ whose Lebesgue $E$
measure is less than or equal to $\delta(\epsilon)$, and uniformly with respect to $n\in \mathbb{N}$.
\end{definition}
\begin{definition}\label{inb}
We say that a sequence $\{f_{n}\}_{n\in \mathbb{N}}$ is integrably bounded if there exists
a $\zeta \in L^1(I,\mathbb{R}^+)$ such that $\norm{f_{n}(t)}\leq \zeta(t)$, for every $n\in \mathbb{N}$ and a.a. $t\in I$.
\end{definition}
Dunford-Pettis Theorem provides the weak compactness result for the space $L^1([0,\nu],X)$.
\begin{theorem}\cite{MR3524637}\label{Dunford Pettis Theorem}
(Dunford Pettis Theorem). Let $X$ be a reflexive Banach space and $\{f_{n}\}_{n\in \mathbb{N}}$ be bounded and uniformly integrable. Then $\{f_{n}\}_{n\in \mathbb{N}}$ is relatively weakly compact in $L^1(I,X)$.
\end{theorem}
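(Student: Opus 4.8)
The plan is to deduce the vector-valued statement from the classical scalar Dunford--Pettis criterion, using reflexivity to upgrade the scalar information to the $X$-valued setting. Throughout I would lean on the Eberlein--\v{S}mulian theorem, so that it suffices to produce a weakly convergent subsequence of $\{f_n\}$ rather than to verify topological weak compactness directly; indeed, Theorem \ref{DP} already phrases reflexivity in exactly this sequential language.

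First I would reduce to the case in which $X$ is separable. Each $f_n$ is Bochner integrable and hence essentially separably valued, so the closed linear span $X_0$ of $\bigcup_n \operatorname{ess\,ran} f_n$ is a separable subspace of $X$. A closed subspace of a reflexive space is reflexive, and $\{f_n\}\subset L^1(I,X_0)$; since the inclusion $L^1(I,X_0)\hookrightarrow L^1(I,X)$ is a bounded (hence weak--weak continuous) operator, relative weak compactness in $L^1(I,X_0)$ transfers to $L^1(I,X)$. Thus I may assume $X$ is separable and reflexive, in which case $X^*$ is separable as well.

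Next I would invoke the Diestel--Uhl form of the Dunford--Pettis theorem: a bounded, uniformly integrable family $K\subset L^1(I,X)$ is relatively weakly compact provided that for every measurable $E\subseteq I$ the set of partial integrals $\{\int_E f\,dt : f\in K\}$ is relatively weakly compact in $X$. Boundedness and uniform integrability are hypotheses. For the partial-integral condition I would note that $\norm{\int_E f_n\,dt}\le \int_E \norm{f_n}\,dt\le \sup_n \norm{f_n}_{L^1(I,X)}$, so $\{\int_E f_n\}$ is a bounded subset of the reflexive space $X$; by Theorem \ref{DP} together with Eberlein--\v{S}mulian, bounded subsets of $X$ are relatively weakly compact, and the condition holds automatically. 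This is precisely the point at which reflexivity enters and replaces the tightness that the general (non-reflexive) version of the criterion would require.

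For a more self-contained route I would instead argue by scalarization and diagonalization: fixing a countable dense set $\{x_k^*\}\subset X^*$, each scalar sequence $\langle f_n(\cdot),x_k^*\rangle$ is bounded and uniformly integrable in $L^1(I,\mathbb{R})$, hence relatively weakly compact by the scalar Dunford--Pettis theorem, and a diagonal extraction yields a subsequence along which $\langle f_{n_j}(\cdot),x_k^*\rangle$ converges weakly in $L^1(I)$ for every $k$. The main obstacle, and the step I expect to be hardest, is then to assemble these scalar limits into a genuine weak limit $f\in L^1(I,X)$ tested against the full dual $L^\infty(I,X^*)$: one must promote convergence from the dense class of simple-function-valued functionals to all of $L^\infty(I,X^*)$, and, crucially, identify the limiting functional with integration against an honest $X$-valued function. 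Uniform integrability supplies the equi-absolute-continuity needed for the density step, while reflexivity --- via the Radon--Nikodym property of $X^*$, which gives the duality $L^1(I,X)^*=L^\infty(I,X^*)$ --- is what guarantees that the limit functional is represented by an element of $L^1(I,X)$. Because this last identification is the delicate part, I would in practice prefer the characterization-based route, where reflexivity is used cleanly and only through Theorem \ref{DP}.
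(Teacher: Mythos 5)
The paper offers no proof of this theorem: it is quoted from \cite{MR3524637} as a known classical result, so there is no internal argument to compare yours with, and your proposal can only be judged on its own terms. On those terms it is essentially correct for reflexive $X$, but the key criterion you invoke in your preferred route is misstated. The claim that, for an \emph{arbitrary} Banach space $X$, boundedness, uniform integrability, and relative weak compactness of all partial integrals $\left\{\int_E f_n\,dt\right\}$ already force relative weak compactness in $L^1(I,X)$ is false: the correct theorem (Diestel--Uhl, \emph{Vector Measures}, Ch.~IV) carries the standing hypothesis that both $X$ and $X^*$ have the Radon--Nikodym property. To see that some such hypothesis is needed, take $X=L^1[0,1]$ and $f_n(t)=n\chi_{[t,\,t+1/n]}$ (addition modulo $1$): then $\norm{f_n(t)}_X\equiv 1$, so the sequence is bounded and uniformly integrable in the Bochner sense, and for every measurable $E\subset[0,1]$ one has $\int_E f_n\,dt\to \chi_E$ in norm, so all partial integrals are even relatively norm compact; yet under the isometry $L^1([0,1],L^1[0,1])\cong L^1([0,1]^2)$ the functions $f_n$ concentrate on strips of measure $1/n$, fail scalar uniform integrability on the square, and hence are not relatively weakly compact.

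Your application is rescued because reflexive spaces and their duals have RNP, so with the criterion cited in its correct form the argument closes exactly as you describe, the partial-integral condition being automatic from Theorem \ref{DP}. Note that your separable reduction is superfluous on this route (RNP needs no separability), whereas it is precisely what your second route requires: separability of $X^*$ gives strong measurability and countably-valued approximation of elements of $L^\infty(I,X^*)$, and the RNP of $X^*$ gives the duality $L^1(I,X)^*=L^\infty(I,X^*)$. That second route is in substance Dunford's original proof, and you correctly isolate its delicate step, namely producing the representing function $f\in L^1(I,X)$ --- which uses the RNP of $X$ itself, applied to the vector measure $E\mapsto \lim_j \int_E f_{n_j}\,dt$ --- so as a sketch it is acceptable; but the first route, with the corrected citation, is the cleaner of the two.
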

\begin{remark}\cite{MR3320658}
In definition \eqref{inb}, if $\{f_{n}\}^{\infty}_{n-1}\subset L^{p}(I,X) ~ (p > 1)$ , $\zeta \in L^{p}(I,\mathbb{R}^+)$ such that $\norm{f_{n}(t)}\leq \zeta(t)$ then $f$ is called $p$-time integrably
bounded.
\end{remark}

\begin{definition}\cite{MR3320658}\label{semicompact}
The sequence $\{f_{n}\}^{\infty}_{n=1}\subset L^{p}(I,X)$ is $p$ - time semicompact if it is $p$-time integrably bounded and the set $\{f_{n}(t)\}^{\infty}_{n=1}$ is relatively compact for a.e. $t\in I$.
\end{definition}

\subsection{Partially Null Controllability}
In this section, we revisit several manifestations of null controllability from the literature, without claiming originality, and explore their interrelationships.

Suppose the linear fractional-order control system 
\begin{equation}\label{NC1}
	\begin{cases}
		^{C}D^{\alpha}_{t}q(t)=\mathbb{A}q(t)+\mathbb{B}u(t), t\in I;\\
		q(0)=x_0.
	\end{cases}
\end{equation}
Here, $^{C}D^{\alpha}_{t}$ is the Caputo fractional derivative of order $\alpha$ with $\frac{1}{p}<\alpha<1$.  The operator $\mathbb{A}$ is a infinitesimal generator of a $C_{0}$ - semigroup $\{T(t)\}_{t\ge 0}$ on Banach space $X$ and $\mathbb{B}$ represent  a bounded linear operator from $U$ to $X$, where $U$ is a uniformly convex Banach space. The control function $u$ belongs to $L^{p}(I,U)$, with $1<p<\infty$. Thus, the influence of the control function $u$ is confined to the range of the operator $\mathbb{B}$. 

An operator $\mathcal{W}:L^p(I,U)\to X$ defined by
\begin{equation}\label{ControlW}
	\mathcal{W}(u)=\int_{0}^{\nu}(\nu-s)^{\alpha-1}\mathcal{T}_{\alpha}(\nu-s)\mathbb{B}u(s)ds, ~ u\in L^p(I,U).
\end{equation}
\begin{definition}
The linear fractional-order control system \eqref{NC1} is partially null controllable with respect to $L^p(I,U)$ with $1<p<\infty$, if for all initial states $x_0\in X$ there exists a control function $u\in L^p(I,U)$ such that 
	
	\begin{equation}
		q(\nu)=\mathcal{S}_{\alpha}(\nu)x_0+\int_{0}^{\nu}(\nu-s)^{\alpha-1}\mathcal{T}_{\alpha}(\nu-s)\mathbb{B}u(s)ds=0,
	\end{equation}
	or, equivalently,
	\begin{equation}
		Im~(\mathcal{S}_{\alpha}(\nu))\subset Im~(\mathcal{W}).
	\end{equation} 
\end{definition}
The following lemma is of fundamental importance \cite{MR2123706}.
\begin{lemma}\label{Lemma210}
	Let $X_0, X, Y $ be Banach spaces. Let $\mathcal{F}\in \mathcal{L}(X,X_0), \mathcal{G}\in \mathcal{L}(Y,X_0)$. Consider the following statements
	\begin{itemize}
		\item[(1)] $Im(\mathcal{G})\subset Im (\mathcal{F})$.
		\item[(2)] There exists $\rho>0$ such that
		\begin{equation*}
			\rho\norm{\mathcal{G}^*x^*}\le \norm{\mathcal{F}^*x^*}, \forall x^*\in X^*.
		\end{equation*}
	\end{itemize}
    Then (1) implies (2). For a reflexive space $X$, (1) holds if and only if (2) holds.
	
\end{lemma}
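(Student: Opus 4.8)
The plan is to prove Lemma \ref{Lemma210}, which is a classical duality result relating range inclusion of two bounded operators to a coercivity-type estimate on their adjoints. I would structure the argument around the operator factorization principle: the inclusion $Im(\mathcal{G})\subset Im(\mathcal{F})$ should be recast as the solvability of $\mathcal{F}X = \mathcal{G}$ for a bounded operator, and then dualized.

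First, to show $(1)\Rightarrow(2)$, I would invoke the closed range / factorization theorem. Given $Im(\mathcal{G})\subset Im(\mathcal{F})$, one wants a bounded linear operator $\Lambda\in\mathcal{L}(Y,X)$ with $\mathcal{G}=\mathcal{F}\Lambda$. The existence of such a factorization follows from Douglas's factorization lemma (the Banach-space version), whose proof rests on the open mapping theorem applied to the induced map $X/\ker(\mathcal{F})\to Im(\mathcal{F})$: since $\mathcal{G}$ maps into $Im(\mathcal{F})$, one lifts each $\mathcal{G}y$ to a preimage of controlled norm. Once $\mathcal{G}=\mathcal{F}\Lambda$ holds, taking adjoints gives $\mathcal{G}^*=\Lambda^*\mathcal{F}^*$, hence $\norm{\mathcal{G}^*x^*}\le\norm{\Lambda^*}\norm{\mathcal{F}^*x^*}$ for all $x^*\in X_0^*$, and choosing $\rho=1/\norm{\Lambda^*}=1/\norm{\Lambda}$ yields $(2)$. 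I would be careful to state the estimate over the dual of the common codomain $X_0$ rather than over $X^*$ as the statement literally reads, since that is the natural pairing.

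For the converse under reflexivity of $X$, I would argue by contraposition or directly: assuming the estimate $\rho\norm{\mathcal{G}^*x^*}\le\norm{\mathcal{F}^*x^*}$, I would show $Im(\mathcal{G})\subset Im(\mathcal{F})$. The standard route is to fix $y\in Y$ and approximate $\mathcal{G}y$ by elements of $Im(\mathcal{F})$. Concretely, the estimate implies $\ker(\mathcal{F}^*)\subset\ker(\mathcal{G}^*)$, and combined with the coercivity it controls the functional $\mathcal{F}^*x^*\mapsto\langle\mathcal{G}y,x^*\rangle$, which is well-defined and bounded on $Im(\mathcal{F}^*)$ by the inequality. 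Extending this functional via Hahn-Banach and using reflexivity of $X$ to identify the extending element as a genuine point of $X$, one realizes $\mathcal{G}y$ as $\mathcal{F}$ applied to that point, giving membership in $Im(\mathcal{F})$. Reflexivity enters precisely here, to pass from a bounded functional on $X^*$ back to an element of $X$.

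The main obstacle I anticipate is the converse direction, and specifically the reflexivity step. Without reflexivity, the coercivity estimate only yields that $\mathcal{G}y$ lies in the bidual image or the weak-$*$ closure of $Im(\mathcal{F})$, not in $Im(\mathcal{F})$ itself; the whole force of the hypothesis on $X$ is to collapse $X^{**}$ back to $X$ so that the Hahn-Banach functional is represented by an actual point of $X$. I would therefore be explicit about where the canonical embedding $X\hookrightarrow X^{**}$ is being inverted. A secondary technical care point is ensuring the factorization operator $\Lambda$ is genuinely bounded, which requires the open mapping theorem and hence completeness of all three spaces; since $X_0,X,Y$ are assumed Banach this is available. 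I would present the forward direction in full and cite the reflexive-space closed-range duality (as in the reference \cite{MR2123706}) for the converse to keep the exposition proportionate.
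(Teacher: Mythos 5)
Your converse direction is correct and is the standard argument, which is all one can compare against here: the paper does not prove Lemma \ref{Lemma210} at all, but quotes it from \cite{MR2123706}. Concretely, your steps check out: the estimate forces $\ker(\mathcal{F}^*)\subset\ker(\mathcal{G}^*)$, so for fixed $y\in Y$ the functional $\varphi(\mathcal{F}^*x^*):=\langle x^*,\mathcal{G}y\rangle$ is well defined on $\mathrm{Im}(\mathcal{F}^*)$ and bounded by $\rho^{-1}\norm{y}$; Hahn--Banach extends it to an element of $X^{**}$, and reflexivity of $X$ represents that element by some $x\in X$ with $\langle \mathcal{F}^*x^*,x\rangle=\langle x^*,\mathcal{G}y\rangle$ for all $x^*$, i.e.\ $\mathcal{F}x=\mathcal{G}y$. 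You are also right, and it is worth saying explicitly, that the quantifier in the paper's statement is a typo: the estimate must run over $x^*\in X_0^*$, the dual of the common codomain, since $\mathcal{F}^*:X_0^*\to X^*$ and $\mathcal{G}^*:X_0^*\to Y^*$.

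The forward direction as you wrote it, however, rests on a false lemma: range inclusion does \emph{not} yield a bounded \emph{linear} $\Lambda\in\mathcal{L}(Y,X)$ with $\mathcal{G}=\mathcal{F}\Lambda$ in general Banach spaces, so the identity $\mathcal{G}^*=\Lambda^*\mathcal{F}^*$ is unavailable. Counterexample: take $X=\ell^{\infty}$, $X_0=Y=\ell^{\infty}/c_0$, $\mathcal{F}=q$ the quotient map and $\mathcal{G}=\mathrm{id}$; then $\mathrm{Im}(\mathcal{G})=\mathrm{Im}(\mathcal{F})$, but $\mathcal{G}=\mathcal{F}\Lambda$ would make $\Lambda$ a bounded linear section of $q$ and hence $c_0$ complemented in $\ell^{\infty}$, contradicting Phillips' theorem. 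The repair is already latent in your own parenthetical: by the closed graph theorem there is a bounded linear $\Lambda_0:Y\to X/\ker\mathcal{F}$ with $\tilde{\mathcal{F}}\Lambda_0=\mathcal{G}$, where $\tilde{\mathcal{F}}:X/\ker\mathcal{F}\to X_0$ is the injective induced operator; since $(X/\ker\mathcal{F})^*$ is isometrically $(\ker\mathcal{F})^{\perp}\subset X^*$ and under this identification $\norm{\tilde{\mathcal{F}}^*x^*}=\norm{\mathcal{F}^*x^*}$, taking adjoints of the \emph{quotient-level} factorization gives $\norm{\mathcal{G}^*x^*}\le\norm{\Lambda_0}\norm{\mathcal{F}^*x^*}$, i.e.\ (2) with $\rho=1/\norm{\Lambda_0}$. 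Alternatively, avoid adjoints of the factor altogether: for $\norm{y}\le 1$ pick $x$ with $\mathcal{F}x=\mathcal{G}y$ and $\norm{x}\le 2\norm{\Lambda_0}$, so that $\abs{\langle x^*,\mathcal{G}y\rangle}=\abs{\langle\mathcal{F}^*x^*,x\rangle}\le 2\norm{\Lambda_0}\norm{\mathcal{F}^*x^*}$, and take the supremum over $y$. (This nonlinear, controlled-norm selection is exactly the phenomenon the paper itself lives with: its $\mathcal{W}^{-1}$ is continuous but not linear.) With that one substitution your proof is complete and coincides with the standard one behind \cite{MR2123706}.
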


Next, we turn our attention to the partially null controllability of the linear fractional-order control system with an additive source term
\begin{equation}\label{NC2}
	\begin{cases}
		^{C}D^{\alpha}_{t}q(t)=\mathbb{A}q(t)+\mathbb{B}u(t)+f(t), ~ t\in I;\\
		q(0)=x_0,
	\end{cases}
\end{equation}
where $f\in L^2(I,X)$.\\

We refer \cite{borah2024null} to define the following operator: $\mathcal{Z}: X\times L^2(I,X)\to X$ defined by
\begin{equation*}
	\mathcal{Z}(x_0,f)=\mathcal{S}_{\alpha}(\nu)x_0+\int_{0}^{\nu}(\nu-s)^{\alpha-1}\mathcal{T}_{\alpha}(\nu-s)f(s)ds, ~ x_0\in X, ~ f\in L^2(I,X).
\end{equation*}
\begin{definition}\label{Defc}
The fractional-order control system \eqref{NC2} is partially - null controllable with respect to $L^p(I,U)$ with $1<p<\infty$, if for all initial states $x_0\in X$ there exists a control function $u\in L^p(I,U)$ such that
	
	\begin{equation}
		q(\nu)=\mathcal{S}_{\alpha}(\nu)x_0+\int_{0}^{\nu}(\nu-s)^{\alpha-1}\mathcal{T}_{\alpha}(\nu-s)(f(s)+\mathbb{B}u(s))ds=0,
	\end{equation}
	or, equivalently,
	\begin{equation}
		Im~(\mathcal{Z})\subset Im~(\mathcal{W}).
	\end{equation} 
\end{definition}
We now outline a key condition for the partially null controllability of the fractional-order linear system \eqref{NC2} which follows directly from \cite[Lemma 3.4]{borah2024null}).

\begin{proposition}\label{Prop}
	The fractional-order system \eqref{NC2} is partially null controllable on $I$ if and only if there exists $\gamma>0$ such that
	\begin{equation*}
		\norm{\mathcal{W}^*x^*}\ge \gamma \norm{\mathcal{Z}^*x^*}, \forall x^*\in X^*.
	\end{equation*}
\end{proposition}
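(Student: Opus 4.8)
The plan is to recognize the claim as a direct instance of the abstract functional-analytic criterion in Lemma \ref{Lemma210}, applied to the two bounded linear operators $\mathcal{W}$ and $\mathcal{Z}$ whose ranges appear in Definition \ref{Defc}. By that definition the system \eqref{NC2} is partially null controllable on $I$ precisely when $Im(\mathcal{Z})\subset Im(\mathcal{W})$, so it suffices to translate this range inclusion into the stated dual inequality. Accordingly, I would first set, in the notation of Lemma \ref{Lemma210}, the common target space $X_0=X$, the operator $\mathcal{F}=\mathcal{W}\in\mathcal{L}(L^p(I,U),X)$, and $\mathcal{G}=\mathcal{Z}\in\mathcal{L}(X\times L^2(I,X),X)$.

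Before invoking the lemma I would record that both maps are genuinely bounded and linear. Linearity is immediate from their integral definitions. Boundedness of $\mathcal{W}$ follows from Lemma \ref{Properties of Salpha Talpha}(i) together with H\"older's inequality: with $q$ the conjugate exponent of $p$, one obtains $\norm{\mathcal{W}(u)}\le \frac{M\norm{\mathbb{B}}}{\Gamma(\alpha)}\bigl(\int_0^\nu(\nu-s)^{(\alpha-1)q}\,ds\bigr)^{1/q}\norm{u}_{L^p}$, and the finiteness of the integral is exactly guaranteed by the standing hypothesis $\alpha>\tfrac{1}{p}$, since $(\alpha-1)q>-1$ is equivalent to $\alpha>\tfrac1p$. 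Boundedness of $\mathcal{Z}$ follows analogously from the uniform bound on $\mathcal{S}_{\alpha}(\nu)$ for the initial-data term and the corresponding $L^2$-estimate for its integral part. This places the hypotheses of Lemma \ref{Lemma210} exactly at our disposal: the implication $(1)\Rightarrow(2)$, valid in any Banach space, already yields the necessity direction (controllability forces the dual inequality).

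The role of the reflexivity hypothesis is the crux, since Lemma \ref{Lemma210} furnishes the converse implication $(2)\Rightarrow(1)$ only when the domain of $\mathcal{F}$ is reflexive; here that domain is $L^p(I,U)$. Because $U$ is uniformly convex it is reflexive by the Milman--Pettis theorem, whence $L^p(I,U)$ is reflexive for $1<p<\infty$, as noted preceding Corollary \ref{BA}. With reflexivity confirmed, the equivalence part of Lemma \ref{Lemma210} gives that $Im(\mathcal{Z})\subset Im(\mathcal{W})$ holds if and only if there is $\rho>0$ with $\rho\norm{\mathcal{Z}^*x^*}\le\norm{\mathcal{W}^*x^*}$ for all $x^*\in X^*$; taking $\gamma=\rho$ and combining with Definition \ref{Defc} yields the assertion. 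The only genuine obstacle is the correct matching of $\mathcal{W},\mathcal{Z}$ to the abstract operators $\mathcal{F},\mathcal{G}$ and the verification that the lemma's reflexivity proviso is met by the \emph{control} space $L^p(I,U)$ rather than by $X$ itself; once these are in place the proof is essentially a transcription of Lemma \ref{Lemma210}.
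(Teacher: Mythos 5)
Your proposal is correct and takes essentially the same route as the paper: the paper obtains Proposition \ref{Prop} by appeal to \cite[Lemma 3.4]{borah2024null}, which is exactly the application of Lemma \ref{Lemma210} with $\mathcal{F}=\mathcal{W}$, $\mathcal{G}=\mathcal{Z}$, $X_0=X$ combined with Definition \ref{Defc} that you carry out, including the key point that the reflexivity proviso falls on the domain of $\mathcal{W}$, namely $L^p(I,U)$, which is reflexive since $U$ is uniformly convex (Milman--Pettis) and $1<p<\infty$. Your boundedness verifications are sound and consistent with the standing hypothesis $\alpha>\tfrac{1}{p}$; the only caveat, inherited from the paper itself rather than introduced by you, is that the $L^2$-estimate for the integral part of $\mathcal{Z}$ requires $2(\alpha-1)>-1$, i.e.\ $\alpha>\tfrac{1}{2}$, which is automatic only when $p\le 2$.
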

From Definition \ref{Defc} and Proposition \ref{Prop}, we establish an equivalent criterion for the partially null controllability of the system\eqref{NC2}.

\begin{proposition}\label{MProp}
	The partially null controllability of the fractional-order system \eqref{NC2} is equivalent to the existence of a $\gamma>0$ such that
	\begin{equation*}
		\norm{(\nu-\cdot)^{\alpha-1}\mathbb{B}^*\mathcal{T}^*_{\alpha}(\nu-\cdot)x^*}_{L^{p^{\prime}}(I,U^*)}\ge \gamma\left[\norm{\mathcal{S}_{\alpha}^*(\nu)x^*}_{X^*}\right]+
         \end{equation*}
    \begin{equation}
        \gamma\left[\norm{(\nu-\cdot)^{\alpha-1}\mathcal{T}_{\alpha}^*(\nu-\cdot)x^*}_{L^2(I,X^*)}\right] ~ \forall x^*\in X^*, 
    \end{equation}
	where, $\frac{1}{p}+\frac{1}{p^{\prime}}=1.$ \\

\textbf{Proof:} Suppose the linear nonhomogeneous fractional- order control system
\begin{equation}
	\begin{cases}
		^{C}D^{\alpha}_{t}q(t)=\mathbb{A}q(t)+\mathbb{B}u(t)+f(t), ~ t\in I;\\
		q(0)=x_0,
	\end{cases}
\end{equation}
where $f\in L^2(I,X)$.\\
Assume the system is partially-null controllable. According to Proposition \ref{Prop}, it follows that
\begin{equation*}
		\norm{\mathcal{W}^*x^*}\ge \gamma \norm{\mathcal{Z}^*x^*}, \forall x^*\in X^*.
	\end{equation*}
Recall an operator $\mathcal{W}:L^p(I,U)\to X$ defined by
\begin{equation*}
	\mathcal{W}(u)=\int_{0}^{\nu}(\nu-s)^{\alpha-1}\mathcal{T}_{\alpha}(\nu-s)\mathbb{B}u(s)ds, ~ u\in L^p(I,U).
\end{equation*}

Then, adjoint of $\mathcal{W}$ is given by:\\
\begin{align*}
    \langle x^*, \mathcal{W}(u)\rangle =& \langle x^*, \int ^{\nu}_{0}(\nu-s)^{\alpha-1}\mathcal{T}_{\alpha}(\nu-s)\mathbb{B}u(s)ds\rangle_{X^*\times X}\\
    =& \int^{\nu}_{0}(\nu-s)^{\alpha-1}\langle x^*, \mathcal{T}_{\alpha}(\nu-s)\mathbb{B}u(s) \rangle_{X^*\times X}ds\\
    =& \int ^{\nu}_{0}(\nu-s)^{\alpha-1}\langle \mathbb{B}^*\mathcal{T}^*_{\alpha}(\nu-s)x^*, u(s)\rangle_{U^*\times U}ds\\
    =& \int^{\nu}_{0}\langle(\nu-s)^{\alpha-1}\mathbb{B}^*\mathcal{T}^*_{\alpha}(\nu-s)x^*, u(s)\rangle_{U^*\times U}ds\\
    =& \langle (\nu-\cdot)^{\alpha-1}\mathbb{B}^*\mathcal{T}^*_{\alpha}(\nu-\cdot)x^*, u(\cdot)\rangle_{L^{p^{\prime}}(I,U^*)\times L^{p}(I,U)}\\
    =& \langle \mathcal{W}^*x^*, u\rangle_{L^{p^{\prime}}(I,U^*)\times L^{p}(I,U)}
\end{align*}

Therefore, adjoint $\mathcal{W}^*:X^*\rightarrow L^{p^{\prime}}(I,U^*)$ of $\mathcal{W}$ is given by 
\begin{equation}
    \mathcal{W}^*(x^*) = (\nu-\cdot)^{\alpha-1}\mathbb{B}^*\mathcal{T}^*_{\alpha}(\nu-\cdot)x^*
\end{equation}

Recall an operator $\mathcal{Z}: X\times L^2(I,X)\to X$ defined by
\begin{equation*}
	\mathcal{Z}(x_0,f)=\mathcal{S}_{\alpha}(\nu)x_0+\int_{0}^{\nu}(\nu-s)^{\alpha-1}\mathcal{T}_{\alpha}(\nu-s)f(s)ds, ~ x_0\in X, ~ f\in L^2(I,X).
\end{equation*}
\begin{remark}
    Consider the space $X\times L^{2}(I,X)$ , which is a Banach space under the norm 
    \begin{equation}
        \norm{(x,f)}_{X\times L^{2}(I,X)}=\norm{x}_{X}+\norm{f}_{L^{2}(I,X)}, ~ (x,f)\in X\times L^{2}(I,X).
    \end{equation}
    The duality pairing in this space can be expressed as 
    \begin{equation}
        \langle(x,f),(y,g)\rangle=\langle x,y\rangle_{X\times X^*}+\langle f,g\rangle_{L^{2}(I,X)\times L^{2}(I,X^*)},
    \end{equation}
    for all $(x,f)\in L^{2}(I,X), (y,g)\in X^*\times L^{2}(I,X^*)$.
\end{remark}
Then, adjoint of $\mathcal{Z}$ is given by: 
\begin{align*}
    \langle x^*, \mathcal{Z}(x_0,f)\rangle_{X^*\times X} =&\langle x^*, \mathcal{S}_{\alpha}(\nu)x_{0}+\int ^{\nu}_{0}(\nu-s)^{\alpha-1}\mathcal{T}_{\alpha}(\nu-s)f(s)ds\rangle \\
    =& \langle x^*, \mathcal{S}_{\alpha}(\nu)x_{0}\rangle + \langle x^*, \int^{\nu}_{0}(\nu-s)^{\alpha-1}\mathcal{T}_{\alpha}(\nu-s)f(s)ds \rangle\\
    =& \langle\mathcal{S}^*_{\alpha}(\nu)x^*,x_{0}\rangle+\int^{\nu}_{0}(\nu-s)^{\alpha-1}\langle \mathcal{T}^*_{\alpha}(\nu-s)x^*,f(s)\rangle ds\\
    =& \langle \mathcal{S}^*_{\alpha}(\nu)x^*, x_{0}\rangle + \langle (\nu-\cdot)^{\alpha-1}\mathcal{T}^*_{\alpha}(\nu-\cdot)x^*, f\rangle_{L^{2}(I,X^*),L^{2}(I,X)}\\
    =& \langle (\mathcal{S}^*_{\alpha}(\nu)x^*,(\nu-\cdot)^{\alpha-1}\mathcal{T}^*_{\alpha}(\nu-\cdot)x^*), (x_{0},f)\rangle\\
    =&\langle \mathcal{Z}^*x^*, (x_{0},f)\rangle
\end{align*}

Therefore, adjoint operator  $\mathcal{Z}^*:X^* \rightarrow X^*\times L^{2}(I,X^*)$ of $\mathcal{Z}$ is given by :
\begin{equation}
    \mathcal{Z}^*(x^*) = (\mathcal{S}^*_{\alpha}(\nu)x^*,(\nu-\cdot)^{\alpha-1}\mathcal{T}^*_{\alpha}(\nu-\cdot)x^*)
\end{equation}
 Substituting the value of $\mathcal{W}^*$ and $\mathcal{Z}^*$ in proposition \eqref{Prop}, then we obtain\\

 \begin{equation*}
		\norm{(\nu-\cdot)^{\alpha-1}\mathbb{B}^*\mathcal{T}^*_{\alpha}(\nu-\cdot)x^*}_{L^{p^{\prime}}(I,U^*)}\ge \gamma\norm{\mathcal{S}_{\alpha}^*(\nu)x^*}_{X^*}+
         \end{equation*}
    \begin{equation}
        \gamma\norm{(\nu-\cdot)^{\alpha-1}\mathcal{T}_{\alpha}^*(\nu-\cdot)x^*}_{L^2(I,X^*)} ~ \forall x^*\in X^*, 
    \end{equation}
	where, $\frac{1}{p}+\frac{1}{p^{\prime}}=1$ \\
 
\end{proposition}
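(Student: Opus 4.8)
The plan is to reduce the stated inequality to the abstract criterion of Proposition \ref{Prop}, namely that partial null controllability of \eqref{NC2} is equivalent to the existence of $\gamma>0$ with $\norm{\mathcal{W}^*x^*}\ge\gamma\norm{\mathcal{Z}^*x^*}$ for all $x^*\in X^*$, and then to make the two adjoints explicit. First I would fix the dual-space identifications that give this criterion concrete meaning. Since $U$ is uniformly convex it is reflexive by the Milman--Pettis theorem, hence $U^*$ is reflexive and has the Radon--Nikodym property, so that $(L^p(I,U))^*\cong L^{p'}(I,U^*)$ with $\tfrac1p+\tfrac1{p'}=1$; likewise $X$ reflexive yields $(L^2(I,X))^*\cong L^2(I,X^*)$, and for the product space $X\times L^2(I,X)$ with norm $\norm{(x,f)}=\norm{x}_X+\norm{f}_{L^2(I,X)}$ the dual is $X^*\times L^2(I,X^*)$ under the pairing recorded in the Remark above.

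Next I would compute $\mathcal{W}^*$. Starting from $\langle x^*,\mathcal{W}(u)\rangle_{X^*\times X}$ and using that a bounded linear functional commutes with the Bochner integral, I would pull the scalar weight $(\nu-s)^{\alpha-1}$ out of the pairing and transpose $\mathcal{T}_\alpha(\nu-s)$ and $\mathbb{B}$ to their adjoints, both legitimate since $\mathcal{T}_\alpha(\nu-s)$ is bounded by Lemma \ref{Properties of Salpha Talpha} and $\mathbb{B}\in\mathcal{L}(U,X)$. This rewrites the integrand as $\langle(\nu-s)^{\alpha-1}\mathbb{B}^*\mathcal{T}^*_\alpha(\nu-s)x^*,u(s)\rangle_{U^*\times U}$, identifying $\mathcal{W}^*x^*=(\nu-\cdot)^{\alpha-1}\mathbb{B}^*\mathcal{T}^*_\alpha(\nu-\cdot)x^*$ as an element of $L^{p'}(I,U^*)$. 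The same transposition applied to $\mathcal{Z}$, now splitting $\langle x^*,\mathcal{Z}(x_0,f)\rangle$ into the $\mathcal{S}_\alpha(\nu)x_0$ term and the integral term, gives $\mathcal{Z}^*x^*=\bigl(\mathcal{S}^*_\alpha(\nu)x^*,\,(\nu-\cdot)^{\alpha-1}\mathcal{T}^*_\alpha(\nu-\cdot)x^*\bigr)$ in $X^*\times L^2(I,X^*)$.

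Finally I would substitute these expressions into Proposition \ref{Prop}. The left-hand side becomes exactly $\norm{(\nu-\cdot)^{\alpha-1}\mathbb{B}^*\mathcal{T}^*_\alpha(\nu-\cdot)x^*}_{L^{p'}(I,U^*)}$, as asserted. The one point demanding genuine care is the norm on the product dual: the norm dual to $\norm{x}_X+\norm{f}_{L^2}$ is the maximum $\max\{\norm{y}_{X^*},\norm{g}_{L^2(I,X^*)}\}$, not the sum appearing in the target inequality. I would resolve this by the elementary equivalence $\max\{a,b\}\le a+b\le 2\max\{a,b\}$ on $[0,\infty)^2$: the existence of some $\gamma>0$ bounding the max below by $\gamma^{-1}\norm{\mathcal{W}^*x^*}$ is equivalent to the existence of a (possibly halved) $\gamma>0$ bounding the sum, so absorbing the factor $2$ into $\gamma$ converts the criterion of Proposition \ref{Prop} into precisely the stated inequality with $\norm{\mathcal{S}^*_\alpha(\nu)x^*}_{X^*}+\norm{(\nu-\cdot)^{\alpha-1}\mathcal{T}^*_\alpha(\nu-\cdot)x^*}_{L^2(I,X^*)}$ on the right. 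This rescaling of $\gamma$ is the only non-mechanical step; the adjoint computations themselves are routine once the dual identifications are in place.
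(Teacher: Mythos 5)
Your proposal is correct and takes essentially the same route as the paper: reduce to Proposition \ref{Prop}, compute $\mathcal{W}^*x^*=(\nu-\cdot)^{\alpha-1}\mathbb{B}^*\mathcal{T}^*_{\alpha}(\nu-\cdot)x^*$ and $\mathcal{Z}^*x^*=\bigl(\mathcal{S}^*_{\alpha}(\nu)x^*,(\nu-\cdot)^{\alpha-1}\mathcal{T}^*_{\alpha}(\nu-\cdot)x^*\bigr)$ by transposing through the duality pairings, and substitute. You are in fact more careful than the paper on one point: the dual of the sum norm on $X\times L^2(I,X)$ is the max norm on $X^*\times L^2(I,X^*)$, so rewriting the criterion with the sum $\left\|\mathcal{S}^*_{\alpha}(\nu)x^*\right\|_{X^*}+\left\|(\nu-\cdot)^{\alpha-1}\mathcal{T}^*_{\alpha}(\nu-\cdot)x^*\right\|_{L^2(I,X^*)}$ on the right genuinely needs your equivalence $\max\{a,b\}\le a+b\le 2\max\{a,b\}$ and the rescaling of $\gamma$, a step the paper silently omits.
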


\textbf{Natural projection map and monotone schauder basis :} A sequence $\{e_n\}_{n\in \mathbb{N}}$ in infinite dimensional normed linear space $X$ is called a Schauder basis if for every $x\in X$ there exist a unique sequence of scalars $\{a_n\}_{n\in \mathbb{N}}$, such that $\displaystyle x=\sum_{n=1}^{\infty}a_ne_n$, where $\{e_n\}_{n\in \mathbb{N}}$ is a linearly independent set in $X$. The presence of a Schauder basis in a Banach space $X$ ensure that $X$ is separable, as the set of finite rational linear combinations of the basis vectors is countable and dense.
\\
Define the natural projections $P_n:X\to X_n$ by
\begin{equation}\label{Proje}
	P_n\left(\sum_{i=1}^{\infty} a_ix_i\right)=\sum_{i=1}^{n}a_ix_i.
\end{equation}

Where, $X$ is a separable reflexive Banach space with a Schauder basis $\{x_1,x_2,\cdot\cdot\cdot, x_n,\cdot \cdot \cdot\}$ and $X_{n}$ is an $n$-dimensional Banach space which is span by $\{x_1,x_2,\cdot\cdot\cdot, x_n\}$. 

\begin{proposition}
    Let $X$ be a Banach space with a Schauder basis $\{x_{n}\}_{n\in\mathbb{N}}$. Then the natural projections $\eqref{Proje}$ are bounded linear operators and $\text{sup}_{n}\norm{P_{n}}< \infty$. 
\end{proposition}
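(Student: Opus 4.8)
The plan is to reduce the statement to a single application of the open mapping theorem, mediated by an auxiliary norm built from the projections themselves. Linearity of each $P_n$ is immediate: the uniqueness of the basis expansion makes the coefficient maps $x\mapsto a_i$ linear, so $P_n(x+\lambda y)=P_nx+\lambda P_n y$. The real content is the boundedness and, above all, the \emph{uniform} boundedness of the family $\{P_n\}_{n\in\mathbb{N}}$.

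First I would introduce on $X$ the auxiliary quantity
\[
\norm{x}_0=\sup_{n\in\mathbb{N}}\norm{P_n x}.
\]
Because the partial sums $P_n x$ converge to $x$ in $X$, the sequence $\{\norm{P_n x}\}_n$ is bounded, so $\norm{x}_0<\infty$ for every $x$; moreover $\norm{x}=\lim_n\norm{P_n x}\le\norm{x}_0$. It is then routine to verify that $\norm{\cdot}_0$ is a norm on $X$ that dominates the original one.

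The heart of the argument is showing that $(X,\norm{\cdot}_0)$ is again a Banach space. Given a $\norm{\cdot}_0$-Cauchy sequence $\{y^{(k)}\}$ with expansions $y^{(k)}=\sum_i a_i^{(k)}x_i$, I would first note that $P_i-P_{i-1}$ (with $P_0:=0$) satisfies $\norm{(P_i-P_{i-1})x}\le 2\norm{x}_0$, so for each fixed $i$ the sequence $\{a_i^{(k)}x_i\}_k$ is norm-Cauchy in $X$ and the coefficients $a_i^{(k)}$ converge to scalars $b_i$. Passing to the limit $l\to\infty$ in the estimate $\norm{P_n(y^{(k)}-y^{(l)})}<\epsilon$ (for each fixed $n$) yields $\norm{P_n(y^{(k)}-y)}\le\epsilon$ uniformly in $n$, where $y$ is to be defined through the $b_i$. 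A telescoping estimate then shows the series $\sum_i b_i x_i$ is norm-Cauchy in $X$, hence converges to an element $y\in X$ whose basis coefficients are exactly the $b_i$. Taking the supremum over $n$ gives $\norm{y^{(k)}-y}_0\le\epsilon$, so $y^{(k)}\to y$ in $\norm{\cdot}_0$ and completeness follows.

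With both $(X,\norm{\cdot})$ and $(X,\norm{\cdot}_0)$ complete and $\norm{\cdot}\le\norm{\cdot}_0$, the identity map from $(X,\norm{\cdot}_0)$ to $(X,\norm{\cdot})$ is a continuous bijection, so the open mapping theorem furnishes a constant $C$ with $\norm{x}_0\le C\norm{x}$ for all $x$. Since $\norm{P_n x}\le\norm{x}_0\le C\norm{x}$ for every $n$ and every $x$, each $P_n$ is bounded and $\sup_n\norm{P_n}\le C<\infty$, as required. The main obstacle is precisely the completeness step: one must carefully reconstruct the limit element together with its basis expansion from the coordinatewise limits, ensuring the reconstructed series actually converges in $X$; once the auxiliary norm is in place, everything else is a formality.
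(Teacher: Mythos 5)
Your proof is correct. The paper states this proposition without proof (it is the classical Banach result on the basis constant; the paper's only gesture toward an argument is its later remark that every basis is monotone with respect to $\norm{x}_s=\sup_n\norm{P_nx}$, which is exactly your auxiliary norm), so there is no in-paper argument to diverge from: your route --- showing $(X,\norm{\cdot}_0)$ is complete and invoking the open mapping theorem on the identity $(X,\norm{\cdot}_0)\to(X,\norm{\cdot})$ --- is the standard textbook proof, and all the key steps check out: the bound $\norm{(P_i-P_{i-1})x}\le 2\norm{x}_0$ does force coefficientwise convergence (using $\norm{x_i}>0$), and your telescoping estimate correctly yields norm-Cauchyness of the partial sums of $\sum_i b_ix_i$. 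The only presentational wrinkle, which you flag yourself, is that the inequality $\norm{P_n(y^{(k)}-y)}\le\epsilon$ should first be recorded as $\norm{P_ny^{(k)}-\sum_{i=1}^n b_ix_i}\le\epsilon$, since $y$ exists only after the convergence of the series is established; once $y$ is defined one has $P_ny=\sum_{i=1}^n b_ix_i$ by uniqueness of the expansion and the stated bound follows verbatim.
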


The number $\text{sup}_{n}\norm{P_{n}}$ is called the basis constant. A basis whose basis constant is one, is called a monotone Schauder basis. In other words, a Schauder basis is monotone if, for every choice of scalars $\{a_{n}\}^{\infty}_{n=1}$, the sequence of numbers $\{\norm{\sum^{n}_{i=1}a_{i}x_{i}}\}^{\infty}_{n=1}$ is non-decreasing. Every Schauder basis is monotone with respect to the norm $\norm{x}_{s} = \text{sup}_{n}\norm{P_{n}x}$.\\

The subsequent results address the convergence behavior of natural projection maps.
\begin{proposition}\label{Prop1}\cite{malaguti2021begin}
	Let $\{x_n\}_{n\in \mathbb{N}}$ be a Schauder basis of the Banach space $X$ and $\{P_n\}_{n\in \mathbb{N}}$ be the sequence of natural projections defined by \eqref{Proje}. Then the following hold:
	\begin{enumerate}
		\item there exists $K\ge 1$ such that $$\norm{P_n(x)}\le K\norm{x},$$ for all $n\in \mathbb{N}$ and $x\in X$.
		\item for every $x\in X$, $\norm{P_n(x)-x}\to 0$ as $n\to \infty$.
	\end{enumerate}
\end{proposition}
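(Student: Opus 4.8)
The plan is to establish the two assertions separately, treating (2) as essentially a restatement of the defining property of a Schauder basis and reserving the substantive work for the uniform bound in (1). The overall strategy for (1) is to manufacture an auxiliary norm out of the projections, prove that it makes $X$ complete, and then invoke the bounded inverse theorem to compare it with the original norm.

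For (2) I would argue directly from the definition. Since $\{x_n\}_{n\in\mathbb{N}}$ is a Schauder basis, every $x\in X$ admits a \emph{unique} expansion $x=\sum_{i=1}^{\infty}a_i x_i$, and convergence of this series in $X$ means precisely that the partial sums $P_n(x)=\sum_{i=1}^{n}a_i x_i$ satisfy $\norm{P_n(x)-x}\to 0$ as $n\to\infty$. Thus (2) needs no further argument and, moreover, it supplies two facts I will use below: the sequence $\{\norm{P_n(x)}\}_n$ converges (hence is bounded), and $\norm{x}=\lim_n\norm{P_n(x)}\le\sup_n\norm{P_n(x)}$.

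For (1) the key device is the auxiliary norm $\norm{x}_s=\sup_{n}\norm{P_n(x)}$. First I would check that $\norm{\cdot}_s$ is a genuine norm: finiteness of the supremum and the one-sided estimate $\norm{x}\le\norm{x}_s$ both follow from part (2), while homogeneity and the triangle inequality are inherited from $\norm{\cdot}$, and $\norm{x}_s=0$ forces $x=0$ via $\norm{x}\le\norm{x}_s$. The crucial step is then to show that $(X,\norm{\cdot}_s)$ is complete. Given a $\norm{\cdot}_s$-Cauchy sequence $\{y^{(k)}\}_k$, the estimate $\norm{P_n(y^{(k)})-P_n(y^{(m)})}\le\norm{y^{(k)}-y^{(m)}}_s$, valid uniformly in $n$ by linearity of $P_n$, shows that for each fixed $n$ the sequence $\{P_n(y^{(k)})\}_k$ is $\norm{\cdot}$-Cauchy, hence convergent to some $z_n\in X$; taking successive differences identifies scalars $a_n$ with $z_n-z_{n-1}=a_n x_n$, and a uniform-in-$n$ argument shows that the candidate limit $y=\sum_{i}a_i x_i$ lies in $X$ and that $y^{(k)}\to y$ in $\norm{\cdot}_s$. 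With completeness secured, the identity map $\mathrm{Id}\colon(X,\norm{\cdot}_s)\to(X,\norm{\cdot})$ is a bounded linear bijection between Banach spaces, so the bounded inverse theorem yields $K$ with $\norm{x}_s\le K\norm{x}$; combined with $\norm{x}\le\norm{x}_s$ this also forces $K\ge 1$. Reading off $\norm{P_n(x)}\le\norm{x}_s\le K\norm{x}$ for every $n$ and $x$ then gives exactly (1).

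I expect the completeness of $(X,\norm{\cdot}_s)$ to be the main obstacle. Extracting a coordinatewise limit $z_n$ and recovering the scalars $a_n$ is routine, but verifying that the reassembled series $\sum_i a_i x_i$ actually converges in $X$ to an element whose $\norm{\cdot}_s$-distance from $y^{(k)}$ tends to zero requires exploiting the uniformity of the Cauchy estimates across all truncation levels $n$ simultaneously. This interchange of the limit in $k$ with the supremum over $n$ is the delicate point, and it is where the argument must be carried out with care rather than by appeal to a one-dimensional coordinate bound.
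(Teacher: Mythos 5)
The paper offers no proof of this proposition: it is imported verbatim from \cite{malaguti2021begin}, and the surrounding text even records the auxiliary norm $\norm{x}_s=\sup_n\norm{P_n(x)}$ on which your argument is built. Your proof is the standard classical one for this fact and is correct: part (2) is immediate from the definition of a Schauder basis (uniqueness of the coefficients is also what makes each $P_n$ well defined and linear), and part (1) follows from completeness of $(X,\norm{\cdot}_s)$ together with the bounded inverse theorem, exactly as you outline. The delicate interchange you flag at the end is resolved precisely by letting $m\to\infty$ in the uniform estimate $\norm{P_n(y^{(k)})-P_n(y^{(m)})}\le\norm{y^{(k)}-y^{(m)}}_s<\epsilon$ (valid for $k,m\ge K$ and all $n$), which yields $\norm{P_n(y^{(k)})-z_n}\le\epsilon$ uniformly in $n$; this simultaneously shows that $\{z_n\}_n$ is $\norm{\cdot}$-Cauchy with limit $y$ whose partial sums are the $z_n$ (by uniqueness of expansions, so $P_n(y)=z_n$) and that $\norm{y^{(k)}-y}_s\le\epsilon$, closing the completeness argument.
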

\begin{proposition}\label{Prop2}\cite{malaguti2021begin}
	Let $X$ be a Banach space with a Schauder basis $\{x_k\}_{k\in \mathbb{N}}$ and $\{P_n\}_{n\in \mathbb{N}}$ be the sequence of natural projections. Then
	\begin{equation}\label{2.2}
		\text{if}~ y_k\rightharpoonup y~\text{then}~ P_n(y_k)\to P_n(y), ~\text{as}~ k\to \infty,~\text{ for every}~ n\in \mathbb{N},
	\end{equation}
	and
	\begin{equation}\label{2.3}
		\text{if}~ y_k\to y~\text{ then}~ P_k(y_k)\to y,~\text{ as}~ k\to \infty.
	\end{equation}
	
	Moreover,
	\begin{equation}\label{2.4}
		\text{if}~X~\text{is reflexive and}~y_k\rightharpoonup y~ \text{then}~ P_k(y_k)\rightharpoonup y~\text{as}~k\to \infty.
	\end{equation}
\end{proposition}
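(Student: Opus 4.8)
The plan is to prove the three convergence statements in turn, drawing on the uniform bound and pointwise convergence of the natural projections from Proposition \ref{Prop1}, and, for the reflexive assertion \eqref{2.4}, on the fact that a Schauder basis of a reflexive space is shrinking. To establish \eqref{2.2}, I would fix $n$ and use that $P_n$ has finite-dimensional range $X_n$. Writing $P_n x=\sum_{i=1}^{n}a_i(x)x_i$, the coordinate functionals $a_i$ are continuous, since the identity $a_i(x)x_i=(P_i-P_{i-1})(x)$ (with $P_0:=0$) exhibits $a_i$ as a bounded functional via the boundedness of the projections. Weak convergence $y_k\rightharpoonup y$ then forces $a_i(y_k)\to a_i(y)$ for each $i\le n$, and since $P_n(y_k)=\sum_{i=1}^{n}a_i(y_k)x_i$ is a finite combination with convergent scalar coefficients, $\norm{P_n(y_k)-P_n(y)}\to 0$. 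Equivalently, $P_n$ is a bounded finite-rank, hence compact, operator, and compact operators carry weakly convergent sequences to norm-convergent ones.

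For \eqref{2.3}, I would use the diagonal splitting
\[
	\norm{P_k(y_k)-y}\le \norm{P_k(y_k-y)}+\norm{P_k(y)-y}.
\]
The first term is at most $K\norm{y_k-y}$ by the uniform bound $\norm{P_k}\le K$ of Proposition \ref{Prop1}(1), and it vanishes because $y_k\to y$; the second term vanishes by the pointwise convergence $P_k(y)\to y$ of Proposition \ref{Prop1}(2). Hence $P_k(y_k)\to y$.

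For \eqref{2.4}, I would fix $x^*\in X^*$ and pass to the adjoint, $\langle x^*,P_k(y_k)\rangle=\langle P_k^*x^*,y_k\rangle$, and split
\[
	\langle P_k^*x^*,y_k\rangle-\langle x^*,y\rangle=\langle P_k^*x^*-x^*,y_k\rangle+\bigl(\langle x^*,y_k\rangle-\langle x^*,y\rangle\bigr).
\]
The second bracket tends to $0$ since $y_k\rightharpoonup y$; for the first I would bound $\abs{\langle P_k^*x^*-x^*,y_k\rangle}\le \norm{P_k^*x^*-x^*}\,\norm{y_k}$, where $\{y_k\}$ is norm-bounded (weakly convergent sequences are bounded by the uniform boundedness principle). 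It therefore suffices that $\norm{P_k^*x^*-x^*}\to 0$, and this yields $\langle x^*,P_k(y_k)\rangle\to\langle x^*,y\rangle$ for every $x^*$, i.e. $P_k(y_k)\rightharpoonup y$.

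The main obstacle is precisely the convergence $P_k^*x^*\to x^*$ needed in \eqref{2.4}. Unlike the pointwise statement $P_k y\to y$ for the projections themselves (Proposition \ref{Prop1}(2)), the analogous statement for the adjoints is \emph{not} automatic; it holds exactly when the basis is shrinking, i.e. when the biorthogonal functionals form a basis of $X^*$. The substantive input is the theorem of James that a Schauder basis of a reflexive Banach space is shrinking, which is where the reflexivity hypothesis in \eqref{2.4} is consumed. Once this is in hand, all three assertions follow from the elementary estimates above.
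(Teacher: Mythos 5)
Your proposal is correct, and since the paper states Proposition \ref{Prop2} without proof (it is quoted from \cite{malaguti2021begin}), the comparison is with the argument in that source and the standard literature. Your proofs of \eqref{2.2} (a natural projection is a bounded finite-rank, hence compact, operator, so it maps weakly convergent sequences to norm-convergent ones; equivalently, continuity of the coordinate functionals via $a_i(x)x_i=(P_i-P_{i-1})(x)$) and of \eqref{2.3} (the splitting $\norm{P_k(y_k)-y}\le K\norm{y_k-y}+\norm{P_k(y)-y}$ with Proposition \ref{Prop1}) are exactly the standard ones. Where you genuinely diverge is \eqref{2.4}: you reduce it to the norm convergence $\norm{P_k^*x^*-x^*}\to 0$, i.e.\ to the basis being shrinking, and you consume reflexivity through James' theorem that every Schauder basis of a reflexive space is shrinking; your equivalence between shrinkingness and $P_k^*x^*\to x^*$ is accurate, since $\norm{x^*-P_n^*x^*}=\sup_{\norm{x}\le 1}\abs{x^*((I-P_n)x)}$ is comparable to the norm of $x^*$ restricted to $\overline{\mathrm{span}}\{x_i\}_{i>n}$. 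The route usually taken (and the one matching the cited source's toolkit) avoids James entirely: set $z_k=y_k-y\rightharpoonup 0$; then $\{P_kz_k\}$ is bounded by $K\sup_k\norm{z_k}$, so by reflexivity every subsequence has a further weakly convergent subsequence $P_{k_j}z_{k_j}\rightharpoonup w$; for fixed $n$ and $k_j\ge n$ one has $P_n(P_{k_j}z_{k_j})=P_nz_{k_j}\to 0$ by \eqref{2.2}, while \eqref{2.2} applied to $P_{k_j}z_{k_j}\rightharpoonup w$ gives $P_n(P_{k_j}z_{k_j})\to P_n(w)$; hence $P_n(w)=0$ for all $n$, and Proposition \ref{Prop1}(2) forces $w=0$, so the whole sequence satisfies $P_kz_k\rightharpoonup 0$ and $P_k(y_k)=P_kz_k+P_k(y)\rightharpoonup y$ since $P_k(y)\to y$ in norm. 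The trade-off: your argument pinpoints exactly where reflexivity enters and actually proves the stronger statement that \eqref{2.4} holds for any shrinking basis, at the cost of invoking a nontrivial theorem of James; the subsequence argument is self-contained, using reflexivity only through weak sequential compactness together with parts \eqref{2.2} and Proposition \ref{Prop1}(2) already at hand.
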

For our analysis, we refer to the fixed point theorem in \cite{pinaud2020controllability}. 

\begin{theorem}\label{fixed}
	Let $K$ be a weakly compact convex subset of a Banach space $X$, and let $\Gamma:K\multimap K$ be a weakly weakly upper semicontinuous multivalued mapping with closed convex values. Then the multimap $\Gamma$ has a fixed point.
\end{theorem}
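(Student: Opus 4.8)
The plan is to recognise Theorem \ref{fixed} as a weak-topology instance of the Kakutani--Fan--Glicksberg fixed point theorem and to verify its hypotheses one at a time. First I would equip $X$ with its weak topology $\sigma(X,X^*)$, under which $X$ becomes a Hausdorff locally convex topological vector space: the generating seminorms are $x\mapsto \abs{\langle x^*,x\rangle_X}$ for $x^*\in X^*$, and the Hahn--Banach theorem separates points, giving Hausdorffness. In this topology the standing hypothesis that $K$ is weakly compact and convex says exactly that $K$ is a nonempty compact convex subset of the locally convex space $(X,\sigma(X,X^*))$, which is the ambient setting demanded by Kakutani--Fan--Glicksberg. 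Note also that, being weakly compact, $K$ is norm-bounded by the uniform boundedness principle, a fact I will use below.

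Next I would check the pointwise conditions on the values of $\Gamma$. By hypothesis each $\Gamma(x)$ is nonempty, convex and norm-closed; since a convex norm-closed set is weakly closed (Mazur's theorem), $\Gamma(x)$ is weakly closed, and as a weakly closed subset of the weakly compact set $K$ it is itself weakly compact. Thus $\Gamma$ has nonempty, convex, weakly compact values, precisely as Kakutani--Fan--Glicksberg requires.

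The third and decisive step is to upgrade the sequential ``weakly weakly upper semicontinuity'' of Definition \ref{dww} to genuine topological upper semicontinuity for the weak topology, which is what the fixed point machinery consumes. The key observation is that in the operative setting of the paper $X$ is separable and reflexive, hence $X^*$ is separable, so the weak topology restricted to the bounded set $K$ is metrizable. On a metrizable space sequential upper semicontinuity coincides with topological upper semicontinuity, so the sequential characterisation (every $x_n\rightharpoonup x$ in $K$ with $y_n\in\Gamma(x_n)$ admits a subsequence $y_{n_k}\rightharpoonup y\in\Gamma(x)$) yields that $\Gamma:(K,w)\multimap(K,w)$ is upper semicontinuous topologically. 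With every hypothesis in place, Kakutani--Fan--Glicksberg produces a point $x\in K$ with $x\in\Gamma(x)$.

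I expect this sequential-to-topological reconciliation to be the main obstacle, since the notion of weak-weak upper semicontinuity adopted here is sequential whereas the fixed point theorem is genuinely topological; the gap is harmless when $K$ is weakly metrizable, as above, but would otherwise require care. As a self-contained alternative that leans on the paper's own machinery and uses the sequential definition directly, I would run a Galerkin argument: using the natural projections $P_n$, solve the finite-dimensional inclusion $x_n\in P_n\Gamma(x_n)$ on the compact convex set $P_n(K)$ by the finite-dimensional Kakutani theorem, extract via weak compactness of $K$ a subsequence $x_{n_k}\rightharpoonup x\in K$, and pass to the limit using the projection-convergence properties of Proposition \ref{Prop2} together with the sequential weak-weak upper semicontinuity to conclude $x\in\Gamma(x)$.
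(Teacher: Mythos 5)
First, a point of comparison: the paper does not prove Theorem \ref{fixed} at all --- it is imported verbatim from \cite{pinaud2020controllability} (``For our analysis, we refer to the fixed point theorem in \cite{pinaud2020controllability}''), so there is no in-paper argument to match. Your overall strategy --- view $K$ as a compact convex set in the Hausdorff locally convex space $(X,\sigma(X,X^*))$, upgrade the values to weakly compact convex via Mazur, and invoke Kakutani--Fan--Glicksberg --- is exactly the route the cited source takes, and your first two steps are correct as written.

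The genuine gap is in your third, ``decisive'' step. The theorem is stated for an \emph{arbitrary} Banach space $X$, but your upgrade from the sequential Definition \ref{dww} to topological upper semicontinuity invokes separability and reflexivity of $X$, which are not hypotheses of the statement; without them the weak topology on the weakly compact set $K$ need not be metrizable (the closed unit ball of a nonseparable Hilbert space is weakly compact but not weakly metrizable), so ``sequential u.s.c.\ $=$ topological u.s.c.\ by metrizability'' fails in the stated generality. The correct bridge is angelicity rather than metrizability: by the Eberlein--\v{S}mulian theorem, weakly compact subsets of a Banach space are angelic in the weak topology, so for any subset of $K$ the weak sequential closure coincides with the weak closure. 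Hence for each weakly closed $V\subset X$ the set $\{x\in K:\Gamma(x)\cap V\neq\emptyset\}$, which Definition \ref{dww} makes sequentially weakly closed, is actually weakly closed in $K$; this gives topological upper semicontinuity of $\Gamma:(K,w)\multimap(K,w)$ with no auxiliary assumptions, and Kakutani--Fan--Glicksberg then applies. Your Galerkin fallback also has a concrete defect beyond requiring a Schauder basis (unavailable in a general Banach space): $\Gamma$ is only defined on $K$, while $P_n(K)\not\subset K$ in general, so the inclusion $x_n\in P_n\Gamma(x_n)$ on $P_n(K)$ is not even well posed without further repair; moreover the limit passage via Proposition \ref{Prop2} again needs reflexivity. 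In short: your argument is sound in the paper's operative setting ($X$ separable reflexive with a Schauder basis, which is where the theorem is applied), but as a proof of Theorem \ref{fixed} as stated it needs the angelicity argument in place of the metrizability claim.
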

\subsection{Hypotheses}
In the present article, $w-X$ denotes the Banach space $X$ equipped with the weak topology, and the subsequent hypothese will be considered:

\begin{itemize}
	\item[\textbf{(T)}] A $C_{0}$- semigroup $\{T(t)\}_{t\ge0}$ on the space $X$ is generated by the operator $\mathbb{A}$. Additionaly, there exists a positive constant $M$ such that the semigroup norm is bounded by $M$ for all non-negative $t$ i.e.
     \begin{equation*}
		\norm{T(t)}_{\mathcal{L}(X)}\le M, ~ t\ge 0.
	\end{equation*} 
	\item[\textbf{(U)}] Consider the linear fractional-order system with an additive source term linked to problem \eqref{1.1}, is formulated as
	\begin{equation*}
		\begin{cases}
			^{C}D^{\alpha}_tq(t)=\mathbb{A}q(t)+\mathbb{B}u(t)+f(t), t\in I;\\
			q(0)=x_0,
		\end{cases}
	\end{equation*}
	is partially null controllable in $I$ for any $f\in L^2(I,X)$.
	\item[(\textbf{F1})] The  values of the multivalued nonlinearity $F : I \times X \multimap X$ are nonempty, weakly closed and convex. 
	\item[(\textbf{F2})]  For every $N\in \mathbb{N}$ there exists $\eta_N\in L^\frac{1}{\alpha_1}(I;\mathbb{R}^+)$ with $\alpha_1<\alpha$ such that
	\begin{equation*}
		\norm{F(t,y)}_X\le \eta_N(t), ~\text{a.a.}~t\in I,~\text{and for every}~y\in X, \norm{y}_X\le N,		
	\end{equation*}
	
	with 
    \begin{equation*}
		\liminf_{N\to \infty}\frac{1}{N}\int_{0}^{\nu}(\nu-s)^{\alpha-1}\eta_N(s)ds=0.
	\end{equation*}
	
	\item[(\textbf{F3})] The function $F(\cdot,y):I\multimap X$ has a measurable selection for every $y\in X$.
	\item[(\textbf{F4})] For almost all $t \in I$, the multimap $F(t,\cdot): X\multimap X$ is weakly weakly upper semicontinuous. 
	
	The multimap $g: C(I,X)\multimap X$ be such that
	\item[(\textbf{g1})]$g$ is weakly closed convex valued multimap and $g$ maps bounded set in $C(I,X)$ into bounded sets in $X$ and 
	\begin{equation*}
		\lim\limits_{\norm{q}_{C(I,X)}\to \infty}\frac{\norm{g(q)}}{\norm{q}_{C(I,X)}}=0.
	\end{equation*}
	\item[(\textbf{g2})] The function $g$ has a weakly closed graph in the following sense:\\
	Suppose $\{w_n\}_{n\in \mathbb{N}}\subset X, \{q_n\}_{n\in \mathbb{N}}\subset C(I,X)$ with $w_n\rightharpoonup w$ in $X$ and $q_n\rightharpoonup q$ in $C(I,X)$. If $w_n\in g(q_n),$ then $w\in g(q)$. 
\end{itemize}
\begin{remark}{\label{R2.25}}
    Suppose Hypothesis (\textbf{g1}) hold. Then following [\cite{malaguti2021begin}, proposition 3] we have
    \begin{equation}
        \lim_{N\rightarrow \infty}\frac{g_{N}}{N}=0,~ \text{where}~ g_{N} = \text{sup}\{\norm{g(q)}:\norm{q}_{C(I,X)}\leq N\} 
    \end{equation}
\end{remark}

\begin{remark}
Demonstrating the partially null controllability of the fractional-order linear control system
	
	\begin{equation}\label{2.20}
		\begin{cases}
			^{C}D^{\alpha}_tq(t)=\mathbb{A}q(t)+\mathbb{B}u(t), t\in I;\\
			q(0)=x_0,
		\end{cases}
	\end{equation}
 does not automatically ensure the partially null controllability of the system presented in equation \eqref{1.1}. This constraint arises because the proper definition of the control for equation \eqref{1.1}, requires the satisfaction of the following condition.    
	 \begin{equation}
		\mathcal{S}_{\alpha}(\nu)x_0+\int_{0}^{\nu}(\nu-s)^{\alpha-1}\mathcal{T}_{\alpha}(\nu-s)f(s)ds\in Im(\mathcal{W}), ~\text{for any }~x_0\in X, f\in L^2(I,X).
	\end{equation}
	This guarantee is achievable only when Assumption (\textbf{U}) is fulfilled.
\end{remark}

\subsection{Measurable selection Map}
Consider the measurable selection map $S_F:C(I,X)\multimap L^2(I,X)$ is defined by
\begin{equation*}
	S_F(q)=\{ f\in L^2(I;X)~| ~ f(t)\in F(t,q(t)) ~\text{for almost every}~ t\in I \}.
\end{equation*}
The upcoming theorems confirm that $S_F(q)$ is nonempty for every $q\in C(I, X)$. The subsequent theorems explore some properties of this set. \\

\begin{theorem}\cite{kumbhakar2025p}\label{Thm2.29}
	Suppose the multivalued map $F:I\times X \multimap X$ satisfies the hypotheses (\textbf{F1})-(\textbf{F4}).
	Then, the multifunction $S_F$ has nonempty and weakly compact convex values.
\end{theorem}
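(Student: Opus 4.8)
The plan is to treat the three claimed properties of the value $S_F(q)$ — convexity, nonemptiness and weak compactness — one at a time for each fixed $q\in C(I,X)$, leaning throughout on the reflexivity of $L^2(I,X)$ (which holds because $X$ is reflexive) to convert closedness statements into compactness. Convexity is immediate: if $f_1,f_2\in S_F(q)$ and $\lambda\in[0,1]$, then $\lambda f_1+(1-\lambda)f_2\in L^2(I,X)$, and for almost every $t$ the vector $\lambda f_1(t)+(1-\lambda)f_2(t)$ lies in $F(t,q(t))$ since that set is convex by \textbf{(F1)}, so the convex combination belongs to $S_F(q)$.

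For nonemptiness, fix $q$ and put $N:=\norm{q}_{C(I,X)}$. First I would show that the superposition $t\mapsto F(t,q(t))$ is a measurable multifunction with nonempty closed values: here \textbf{(F3)} supplies measurability in $t$, \textbf{(F4)} supplies weak--weak upper semicontinuity in the state, and $q$ is continuous, while separability and reflexivity of $X$ make the weak and norm Borel structures coincide, so these ingredients combine to yield measurability of the composed map. I would then apply the Kuratowski--Ryll-Nardzewski selection theorem to obtain a measurable $f:I\to X$ with $f(t)\in F(t,q(t))$ almost everywhere. The growth estimate \textbf{(F2)} gives $\norm{f(t)}_X\le\eta_N(t)$ a.e., and since $\eta_N\in L^{1/\alpha_1}(I)$ on the bounded interval $I$, this domination places $f$ in $L^2(I,X)$, so $f\in S_F(q)$ and $S_F(q)\neq\emptyset$.

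For weak compactness I would verify that $S_F(q)$ is a bounded, convex, strongly closed subset of the reflexive space $L^2(I,X)$. Boundedness follows from \textbf{(F2)} via $\norm{f}_{L^2}\le\norm{\eta_N}_{L^2}$ for every $f\in S_F(q)$. For strong closedness, take $f_n\in S_F(q)$ with $f_n\to f$ in $L^2(I,X)$; along a subsequence $f_{n_k}(t)\to f(t)$ for almost every $t$, and because each $F(t,q(t))$ is weakly closed — hence strongly closed — the limit satisfies $f(t)\in F(t,q(t))$ a.e., so $f\in S_F(q)$. A convex, strongly closed, bounded subset of a reflexive Banach space is weakly closed by Mazur's lemma, and a bounded weakly closed set is weakly compact by Corollary \ref{BA} together with the Eberlein--Smulian theorem.

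I expect the principal obstacle to be the measurability of the composition $t\mapsto F(t,q(t))$: the classical Carath\'eodory argument assumes norm continuity in the state, whereas \textbf{(F4)} provides only upper semicontinuity for the weak topology, so this semicontinuity must be transferred to genuine measurability. The resolution rests on the separability and reflexivity of $X$, which make the weak topology metrizable on bounded sets and identify its Borel $\sigma$-algebra with the norm one, allowing the selection theorem to be invoked; carefully justifying this passage, and confirming that the dominating function $\eta_N$ indeed places all selections in the target space $L^2(I,X)$, is where the substantive work lies.
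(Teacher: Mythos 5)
Your convexity argument and your route to weak compactness (a bounded, convex, norm-closed subset of the reflexive space $L^2(I,X)$, then Mazur plus Eberlein--\v{S}mulian) are correct as far as they go, and match the scheme of the cited source \cite{kumbhakar2025p}. The genuine gap is in the nonemptiness step. You reduce it to measurability of the composed multimap $t\mapsto F(t,q(t))$ followed by the Kuratowski--Ryll-Nardzewski selection theorem, asserting that ``\textbf{(F3)} supplies measurability in $t$.'' It does not: \textbf{(F3)} only guarantees that the multimap $F(\cdot,y):I\multimap X$ \emph{admits one measurable selection} for each fixed $y$, which is strictly weaker than measurability of that multimap. And even if measurability of $F(\cdot,y)$ were granted, hypotheses of the form ``measurable in $t$, upper semicontinuous in the state'' do not yield superpositional measurability of $t\mapsto F(t,q(t))$: the Carath\'eodory-type composition theorems you implicitly invoke require continuity (or at least lower semicontinuity) in the state variable, whereas \textbf{(F4)} is an \emph{upper} semicontinuity condition --- and a merely sequential one, for the weak topology --- which under approximation of $q$ produces only one-sided $\limsup$-inclusions, not the two-sided control needed to show preimages of closed sets under the composition are measurable. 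The remedies you point to (metrizability of the weak topology on bounded sets, coincidence of the weak and norm Borel $\sigma$-algebras) repair neither defect, so KRN cannot be applied as stated and the core of the nonemptiness proof is missing.

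The standard argument --- the one used in \cite{kumbhakar2025p} and in the related works \cite{benedetti2014controllability}, \cite{malaguti2021begin} --- bypasses measurability of the composition entirely: approximate $q$ uniformly by piecewise constant functions $q_n$; on each interval of constancy \textbf{(F3)} applies verbatim, producing measurable $f_n$ with $f_n(t)\in F(t,q_n(t))$ a.e.; \textbf{(F2)} gives the uniform domination $\norm{f_n(t)}\le \eta_N(t)$ with $N\ge \sup_n\norm{q_n}_{C(I,X)}$; reflexivity of $X$ (Corollary \ref{BA}, or Dunford--Pettis) yields $f_n\rightharpoonup f$ along a subsequence; and Mazur's convexity theorem combined with \textbf{(F1)} and \textbf{(F4)} --- exactly the limit mechanism behind Theorem \ref{SFT} --- shows $f(t)\in F(t,q(t))$ a.e. You should rebuild your nonemptiness step along these lines. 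One further quantitative point you should not gloss: $\eta_N\in L^{1/\alpha_1}(I;\mathbb{R}^+)$ with only $\alpha_1<\alpha<1$ assumed does \emph{not} place selections in $L^2(I,X)$ unless $1/\alpha_1\ge 2$ (for instance $\alpha_1=4/5$ gives $L^{5/4}(I)\not\subset L^2(I)$ even on a bounded interval), so your claims $f\in L^2(I,X)$ and $\norm{f}_{L^2}\le\norm{\eta_N}_{L^2}$ need either the extra restriction $\alpha_1\le 1/2$ or a separate justification.
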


\begin{theorem}\cite{kumbhakar2025p}\label{SFT}
	Let $X$ be a Banach space and let $F:I\times X\multimap X$ be a multivalued function such that (\textbf{F1})-(\textbf{F4}) are satisfied. Let $q_m\in C(I,X)$, $f_m\in L^2(I,X)$ be such that $q_m(t)\rightharpoonup q(t)$ in $X$, $f_m\rightharpoonup f$ in $L^2(I,X)$. If $f_m\in S_F(q_m)$, then $f\in S_F(q)$.
\end{theorem}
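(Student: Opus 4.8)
The plan is to reduce the assertion $f\in S_F(q)$ to the pointwise membership $f(t)\in F(t,q(t))$ for almost every $t\in I$, and to bridge the weak convergence $f_m\rightharpoonup f$ in $L^2(I,X)$ with this pointwise statement by means of Mazur's lemma together with a Hahn--Banach separation argument applied to the (convex, closed) values of $F$.

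First I would record pointwise bounds. Since $q_m(t)\rightharpoonup q(t)$ in $X$, each sequence $\{q_m(t)\}_m$ is weakly convergent, hence bounded, so I may fix $N=N(t)\in\mathbb{N}$ with $\norm{q_m(t)}_X\le N$ for all $m$. By (\textbf{F2}) this gives $\norm{f_m(t)}_X\le \eta_N(t)$ for almost all $t$, so $\{f_m(t)\}_m$ is bounded in $X$ for a.a.\ $t$; the same hypothesis (with $\norm{q(t)}_X\le N$) shows that the value $C(t):=F(t,q(t))$ is bounded, and by (\textbf{F1}) it is convex and weakly closed, hence closed and convex. Next I would invoke Mazur's lemma: as $f_m\rightharpoonup f$ in $L^2(I,X)$, for each $n$ the function $f$ lies in the closed convex hull of the tail $\{f_m:m\ge n\}$, so there exist finite convex combinations $g_n=\sum_{m\ge n}\lambda^{(n)}_m f_m$, with $\lambda^{(n)}_m\ge 0$ and $\sum_m\lambda^{(n)}_m=1$, such that $g_n\to f$ strongly in $L^2(I,X)$. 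Passing to a subsequence I may assume $g_n(t)\to f(t)$ in $X$ for almost every $t$. Let $E\subset I$ be the full-measure set of points $t$ at which simultaneously $\{f_m(t)\}_m$ is bounded, $F(t,\cdot)$ is weakly weakly upper semicontinuous (a.a.\ $t$ by (\textbf{F4})), and $g_n(t)\to f(t)$.

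Then I fix $t\in E$ and run a separation argument. For $x^*\in X^*$ put $\sigma(x^*)=\sup_{z\in C(t)}\langle x^*,z\rangle_X$, which is finite because $C(t)$ is bounded. I claim $\limsup_m\langle x^*,f_m(t)\rangle_X\le\sigma(x^*)$: choosing a subsequence realizing this $\limsup$, the corresponding $q_m(t)$ still converge weakly to $q(t)$ and satisfy $f_m(t)\in F(t,q_m(t))$, so the sequential characterization of weakly weakly upper semicontinuity (the Lemma following Definition \ref{dww}) yields a further subsequence $f_{m_k}(t)\rightharpoonup y\in C(t)$, whence $\langle x^*,f_{m_k}(t)\rangle_X\to\langle x^*,y\rangle_X\le\sigma(x^*)$; since this subsequence also realizes the $\limsup$, the claim follows. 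Consequently $\langle x^*,g_n(t)\rangle_X=\sum_{m\ge n}\lambda^{(n)}_m\langle x^*,f_m(t)\rangle_X\le\sup_{m\ge n}\langle x^*,f_m(t)\rangle_X$, and letting $n\to\infty$ gives $\langle x^*,f(t)\rangle_X\le\sigma(x^*)$. As this holds for every $x^*\in X^*$ and $C(t)$ is closed and convex, the Hahn--Banach representation of closed convex sets as intersections of half-spaces forces $f(t)\in C(t)=F(t,q(t))$. Since $t\in E$ was arbitrary and $E$ has full measure, $f(t)\in F(t,q(t))$ for a.e.\ $t$, i.e.\ $f\in S_F(q)$.

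The main obstacle I anticipate is exactly the transfer of the \emph{global} weak $L^2$ information about $f$ to an \emph{almost-everywhere pointwise} membership in the moving target $F(t,q(t))$; this is what Mazur's lemma (to regain strong, hence a.e., convergence) combined with the support-function estimate accomplishes. The delicate point within this is verifying that every weak subsequential limit of $\{f_m(t)\}$ lands in $F(t,q(t))$, which relies crucially on the ``for every sequence and every selection'' strength of weakly weakly upper semicontinuity applied along arbitrary subsequences, and on (\textbf{F1})--(\textbf{F2}) to ensure that $F(t,q(t))$ is closed, convex and bounded so that the separation argument is valid.
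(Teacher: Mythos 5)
Your proposal is correct in substance and follows what is essentially the expected route for this statement (which the paper only cites from \cite{kumbhakar2025p} rather than reproving): Mazur's lemma to upgrade the weak $L^2$-convergence $f_m\rightharpoonup f$ into a.e.\ strong convergence of convex combinations of tails, followed by a Hahn--Banach separation argument against the closed convex set $F(t,q(t))$, with the weak--weak upper semicontinuity of $F(t,\cdot)$ supplying the key estimate $\limsup_m\langle x^*,f_m(t)\rangle\le \sigma(x^*):=\sup_{z\in F(t,q(t))}\langle x^*,z\rangle$. Indeed, the paper itself replays exactly this Mazur-plus-separation scheme when verifying Proposition \ref{F4} in the application section, so your architecture matches the intended one.

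One step, however, needs repair as written. To prove the $\limsup$ claim you invoke ``the sequential characterization'' of weak--weak upper semicontinuity, namely the lemma following Definition \ref{dww}; but that lemma is stated only in the sufficient direction (the subsequence-extraction property \emph{implies} w-w upper semicontinuity), whereas you use the converse. Moreover, extracting a weakly convergent subsequence from the bounded set $\{f_m(t)\}_m$ requires reflexivity (Theorem \ref{DP}), while the theorem as stated assumes only that $X$ is Banach; and even granting the paper's standing assumptions (separable, reflexive $X$), identifying the weak limit as an element of $F(t,q(t))$ starting from Definition \ref{dww} alone requires a further argument (weak metrizability on bounded sets, weak compactness of the decreasing tail-closures, weak closedness of the values). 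All of this is avoidable: use instead the paper's other lemma, which goes in the correct direction --- if $\mathcal{F}$ is weakly weakly upper semicontinuous, then for every weakly open $V$ containing $\mathcal{F}(x_0)$ there is a weakly open $O\ni x_0$ with $\mathcal{F}(x)\subset V$ for all $x\in O$. Applying this with the weakly open convex half-space $V_\epsilon=\{z\in X:\langle x^*,z\rangle<\sigma(x^*)+\epsilon\}$, and using $q_m(t)\rightharpoonup q(t)$, gives $\langle x^*,f_m(t)\rangle<\sigma(x^*)+\epsilon$ for all large $m$, hence the $\limsup$ bound in any normed space and with no compactness at all. With this substitution your proof is complete. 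A last minor point: since the exceptional null set in (\textbf{F2}) depends on $N$, when you choose $N=N(t)$ pointwise you should first discard the countable union over $N\in\mathbb{N}$ of these null sets (and likewise the null sets where $f_m(t)\notin F(t,q_m(t))$, over $m\in\mathbb{N}$) before fixing $t$; your set $E$ should be defined accordingly.
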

\section{Existence Result:}
The following theorem proves the existence results to the problem \eqref{1.1}\\
\begin{theorem}
    Fix $u\in L^{p}(I,U)$. Assume that Hypothesis (\textbf{T}) holds. Also, assume that the
multivalued nonlinearity $F$ satisfies Hypotheses (\textbf{F1-F4}) and the nonlocal multifunction $g$ satisfies hypotheses (\textbf{g1-g2}). Then, the system \eqref{1.1} has atleast one mild solution.
\end{theorem}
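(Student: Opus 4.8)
The plan is to recast the inclusion \eqref{1.1} as a fixed point problem for a multivalued solution operator and to apply the weak fixed point theorem (Theorem \ref{fixed}). Define $\Gamma:C(I,X)\multimap C(I,X)$ by declaring $y\in\Gamma(q)$ whenever there exist $w\in g(q)$ and $f\in S_F(q)$ with
\begin{equation*}
 y(t)=\mathcal{S}_{\alpha}(t)(x_0+w)+\int_{0}^{t}(t-s)^{\alpha-1}\mathcal{T}_{\alpha}(t-s)f(s)\,ds+\int_{0}^{t}(t-s)^{\alpha-1}\mathcal{T}_{\alpha}(t-s)\mathbb{B}u(s)\,ds,\quad t\in I.
\end{equation*}
By construction, the fixed points of $\Gamma$ are precisely the mild solutions of \eqref{1.1}. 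First I would check that $\Gamma$ is well defined with nonempty values: $S_F(q)\neq\emptyset$ by Theorem \ref{Thm2.29}, $g(q)\neq\emptyset$ by (\textbf{g1}), and each $y\in\Gamma(q)$ lies in $C(I,X)$ thanks to the strong continuity and uniform boundedness of $\mathcal{S}_\alpha$, $\mathcal{T}_\alpha$ (Lemma \ref{Properties of Salpha Talpha}) together with the integrability bound furnished by (\textbf{F2}).

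Next I would produce an invariant weakly compact convex set. Using $\norm{\mathcal{S}_\alpha(t)}\le M$, $\norm{\mathcal{T}_\alpha(t)}\le M/\Gamma(\alpha)$, the growth bound (\textbf{F2}) on $F$, and the sublinear growth of $g$ (hypothesis (\textbf{g1}) and Remark \ref{R2.25}), I would estimate $\norm{y}_{C(I,X)}$ for $y\in\Gamma(q)$ with $\norm{q}_{C(I,X)}\le N$; the two limit conditions $\frac1N\int_0^\nu(\nu-s)^{\alpha-1}\eta_N(s)\,ds\to0$ and $g_N/N\to0$ then let me fix $R>0$ so that $\Gamma(B_R)\subset B_R$ for the closed ball $B_R\subset C(I,X)$. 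I would then show the family $\Gamma(B_R)$ is equicontinuous — splitting the singular integral near $s=t$ and invoking strong continuity of the operators — and pointwise relatively weakly compact (each section is bounded in the reflexive space $X$). A weak Arzel\`a--Ascoli argument then makes $K:=\overline{\mathrm{conv}}^{\,w}\,\Gamma(B_R)$ a weakly compact convex, $\Gamma$-invariant subset of $C(I,X)$.

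It remains to verify the hypotheses of Theorem \ref{fixed} on $K$. Convexity of $\Gamma(q)$ follows because the representation is affine in $(w,f)$ while $g(q)$ and $S_F(q)$ are convex (by (\textbf{g1}) and Theorem \ref{Thm2.29}); weak closedness of the values follows by the same limit passage used below. For weak--weak upper semicontinuity I would take $q_n\rightharpoonup q$ in $C(I,X)$ and $y_n\in\Gamma(q_n)$, written through $w_n\in g(q_n)$, $f_n\in S_F(q_n)$. Continuity of point evaluation gives $q_n(t)\rightharpoonup q(t)$ in $X$; reflexivity (Corollary \ref{BA}, or Dunford--Pettis, Theorem \ref{Dunford Pettis Theorem}) yields, along a subsequence, $w_n\rightharpoonup w$ in $X$ and $f_n\rightharpoonup f$ in $L^2(I,X)$. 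Hypothesis (\textbf{g2}) gives $w\in g(q)$ and Theorem \ref{SFT} gives $f\in S_F(q)$. Since for each $t$ the maps $w\mapsto\mathcal{S}_\alpha(t)w$ and $f\mapsto\int_0^t(t-s)^{\alpha-1}\mathcal{T}_\alpha(t-s)f(s)\,ds$ are bounded linear, hence weakly continuous, I obtain $y_n(t)\rightharpoonup y(t)$ with $y\in\Gamma(q)$; weak compactness of $K$ and uniqueness of weak limits upgrade this to a weakly convergent subsequence $y_{n_k}\rightharpoonup y$ in $C(I,X)$, which is exactly the characterization of weak--weak upper semicontinuity. Theorem \ref{fixed} then supplies a fixed point of $\Gamma$, i.e.\ a mild solution of \eqref{1.1}.

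I expect the main obstacle to be the weak-compactness step: proving equicontinuity of $\Gamma(B_R)$ in the presence of the singular kernel $(t-s)^{\alpha-1}$, which forces a careful splitting of the integral and a combination of absolute continuity of the integral with the strong continuity of $\mathcal{T}_\alpha$, and then upgrading pointwise weak compactness to genuine weak compactness in $C(I,X)$. The second delicate point is the simultaneous limit passage for the selection $f$ and the nonlocal term $w$, where the topologies must be matched so that Theorem \ref{SFT} and (\textbf{g2}) apply in tandem.
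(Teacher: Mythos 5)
Your overall strategy — a single fixed-point argument for the solution multimap $\Gamma$ on a weakly compact convex invariant subset of $C(I,X)$ — is \emph{not} the paper's route, and as written it has a genuine gap at the compactness step. You claim that $\Gamma(B_R)$ is norm-equicontinuous, "splitting the singular integral near $s=t$ and invoking strong continuity of the operators." Strong continuity of $\mathcal{S}_\alpha$, $\mathcal{T}_\alpha$ gives continuity of $t\mapsto \mathcal{S}_\alpha(t)x$ for each \emph{fixed} $x$, uniformly only over norm-compact sets of vectors. In your situation the vectors range over merely bounded sets: $w$ runs through the bounded set $g(B_R)$ in the term $\mathcal{S}_\alpha(\cdot)(x_0+w)$, and $f(s)$ runs through all vectors with $\norm{f(s)}\le\eta_N(s)$ in the convolution term. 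Uniform estimates of the type
\begin{equation*}
\sup_{\norm{w}\le R}\norm{\bigl(\mathcal{S}_\alpha(t')-\mathcal{S}_\alpha(t)\bigr)(x_0+w)}\to 0,
\qquad
\sup_{\norm{x}\le 1}\norm{\bigl(\mathcal{T}_\alpha(t')-\mathcal{T}_\alpha(t)\bigr)x}\to 0,
\end{equation*}
amount to norm-continuity of the underlying semigroup, which fails for general $C_0$-semigroups (e.g.\ the translation semigroup on $L^p(\mathbb{R})$). Since the paper explicitly refuses to assume the semigroup compact (item (2) of the introduction), this equicontinuity cannot be had, and with it your "weak Arzel\`a--Ascoli" construction of the weakly compact set $K$ collapses: without equicontinuity, boundedness plus pointwise relative weak compactness does not yield relative weak compactness of $\Gamma(B_R)$ in $C(I,X)$, and a pointwise weak limit of elements of $\Gamma(B_R)$ is a priori only weakly continuous, so it need not even belong to $C(I,X)$ unless you first identify it through the integral representation. (A possible repair would scalarize: on a reflexive space the adjoint semigroup is again $C_0$, so the families $\{\langle x^*,y(\cdot)\rangle : y\in\Gamma(B_R)\}$ are equicontinuous for each fixed $x^*$, and one can try to combine this with the criterion that bounded, pointwise weakly convergent sequences converge weakly in $C(I,X)$ — the fact the paper itself uses in its limit step — but none of this is in your proposal, and it is a substantive piece of work, not a routine patch.)

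The paper avoids this difficulty entirely by a Galerkin-type reduction, which is where the two arguments genuinely diverge. Using the monotone Schauder basis and the natural projections $P_n$, it defines for each $n$ a multimap $\Lambda_n$ on $C(I,X_n)$; because $P_nf_m(s)$ takes values in a bounded subset of the finite-dimensional space $X_n$, the relevant sequences are semicompact (Definition \ref{semicompact}), so Lemma \ref{GG} and Corollary \ref{Colo6.6} deliver \emph{strong} relative compactness of $\Lambda_n(B^{N_0}_n)$ in $C(I,X_n)$ — compactness is manufactured by the projections, not extracted from the semigroup. The fixed point theorem is then applied in each $C(I,X_n)$ to get fixed points $q_n$, and the mild solution of \eqref{1.1} arises only afterwards, as a weak limit of the $q_n$, via exactly the limit machinery you describe (weak convergence of $w_n$ and $f_n$, hypothesis (\textbf{g2}), Theorem \ref{SFT}, Lemma \ref{Lem6.3}, Theorem \ref{Thm6.1}, and the step "bounded $+$ pointwise weak convergence $\Rightarrow$ weak convergence in $C(I,X)$"). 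That final identification portion of your proposal is sound and essentially coincides with the paper's Step (iv) of the existence proof; the gap is localized in your construction of the weakly compact invariant set, which is precisely the obstruction the paper's finite-dimensional approximation scheme was designed to circumvent.
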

\begin{proof}
    Let us define 
    \begin{equation}
        B^{N}_{n} =\{q\in C(I,X_{n}): \norm{q(t)}\leq N, ~ t\in I\}, ~ N>0,
    \end{equation}
    where $X_{n} = \text{span}\{x_1,x_2,...,x_n\}$. Fix $n\in \mathbb{N}$ and $u(\cdot)\in L^{p}(I,U)$. \\
    Define multimap $\Lambda_{n} : C(I,X_{n})\multimap C(I,X_{n})$ as follows: for $q\in C(I,X_{n})$,
    if $y\in \Lambda_{n}(q)$, then 
    \begin{equation}{\label{T33}}
        y(t) = P_n\mathcal{S}_{\alpha}(t)(x_{0}+w)+\int ^{t}_{0}(t-s)^{\alpha-1}P_{n}\mathcal{T}_{\alpha}(t-s)P_{n}f(s)ds + \int ^{t}_{0}(t-s)^{\alpha-1}P_{n}\mathcal{T}_{\alpha}(t-s)Bu(s)ds
    \end{equation}
    $ t\in I,$
    where $w\in g(q)$ and $f\in S_{F}(q)$. Here $P_n$, $n\in \mathbb{N}$ denotes the natural projections defined in \eqref{Proje}. We
complete the proof in several steps.\\

\textbf{\textit{Step-(i)}:} In this step we show that the multimap $\Lambda_{n}$ maps $B^{N_{0}}_{n}$ into $B^{N_{0}}_{n}$ for some $N_{0}>0$. Let $y\in \Lambda_{n}(q)$, $q\in B^N_{n}$. By the definition of the map $\Lambda_{n}$ we obtain $y$ satisfies \eqref{T33}. We now estimate \\
\begin{align*}
    \norm{y(t)}&\leq \norm{P_{n}\mathcal{S}_{\alpha}(t)(x_0+w)}+\int^{t}_{0}(t-s)^{\alpha-1}\norm{P_n\mathcal{T}_{\alpha}(t-s)P_{n}f(s)}ds\\ 
     &+ \int^{t}_{0}(t-s)^{\alpha-1}\norm{P_{n}\mathcal{T}_{\alpha}(t-s)Bu(s)}ds\\
    &\leq M\norm{x_{0}+w}+\frac{M\kappa_1}{\Gamma(\alpha)}\norm{\eta_{N}}_{L^{\frac{1}{\alpha_1}}(I;\mathbb{R}^+)}+\frac{M\kappa_2}{\Gamma(\alpha)}\norm{\mathbb{B}}\norm{u}_{L^{p}(I;U)}\\
    &\leq M\norm{x_{0}}+M\norm{w}+\frac{M\kappa_1}{\Gamma(\alpha)}\norm{\eta_{N}}_{L^{\frac{1}{\alpha_1}}(I;\mathbb{R}^+)}
    +\frac{M\kappa_2}{\Gamma(\alpha)}\norm{\mathbb{B}}\norm{u}_{L^{p}(I;U)}\\
    &\leq M\norm{x_{0}}+M\norm{g(B^{N}_{n})}+\frac{M\kappa_1}{\Gamma(\alpha)}\norm{\eta_{N}}_{L^{\frac{1}{\alpha_1}}(I;\mathbb{R}^+)}
    +\frac{M\kappa_2}{\Gamma(\alpha)}\norm{\mathbb{B}}\norm{u}_{L^{p}(I;U)}
\end{align*}
Here, $\kappa_1= \Big(\frac{\nu^{\frac{\alpha-\alpha_1}{1-\alpha_1}}}{\frac{\alpha-\alpha_1}{1-\alpha_1}}\Big)^{1-\alpha_1}$ and $\kappa_2= \Big(\frac{\nu^{p^{\prime}(\alpha-1)+1}}{p^{\prime}(\alpha-1)+1}\Big)^{\frac{1}{p^{\prime}}}$. We use H$\ddot{\text{o}}$lder's inequality, $\frac{1}{p}+\frac{1}{p^{\prime}}=1$, and $\norm{g(B^{N}_{n})} = \text{sup}\{\norm{w}:w\in g(q), ~ q\in B^{N}_{n}\}$. Therefore, we obtain
\begin{align*}
    \norm{\Lambda_{n}(q)}\leq &M\norm{x_{0}}+M\norm{g(B^{N}_{n})}
+\frac{M}{\Gamma(\alpha)}\kappa_1\norm{\eta_{N}}_{L^{\frac{1}{\alpha_1}}(I;\mathbb{R}^+)} 
+\frac{M}{\Gamma(\alpha)}\norm{\mathbb{B}}\kappa_2\norm{u}_{L^{p}(I;U)}
    \end{align*}
Consequently, 
\begin{equation*}
    \norm{\Lambda_{n}(B^{N}_{n})}\leq M\norm{x_{0}}+M\norm{g(B^{N}_{n})}+\frac{M}{\Gamma(\alpha)}\kappa_1\norm{\eta_{N}}_{L^{\frac{1}{\alpha_1}}(I;\mathbb{R}^+)}  +\frac{M}{\Gamma(\alpha)}\norm{\mathbb{B}}\kappa_2\norm{u}_{L^{p}(I;U)}.
\end{equation*}
In view of Remark \ref{R2.25} and on the basis of Hypothesis (\textbf{F2}) and (\textbf{g1}) we deduce that
\begin{equation}
    \lim_{N\rightarrow \infty}\text{inf} \frac{1}{N}\norm{\eta_{N}}_{L^{\frac{1}{\alpha_1}}(I;U)}=0 ~ \text{and} \lim_{N\rightarrow \infty} \frac{\norm{g(B^{N}_{n})}}{N}=0
\end{equation}
Based on this observation, we can deduce that
\begin{align*}
    0&\leq\liminf_{N\rightarrow \infty}\frac{1}{N}\norm{\Lambda_{n}(B^{N}_{n})}\\
    &\leq\liminf_{N\rightarrow \infty}\frac{M}{N}\Big[\norm{x_{0}}+\norm{g(B^{N}_{n})}
    +\frac{\kappa_1}{\Gamma(\alpha)}\norm{\eta_{N}}_{L^{\frac{1}{\alpha_1}}(I;\mathbb{R}^+)}
    +\frac{\kappa_2}{\Gamma(\alpha)}\norm{\mathbb{B}}\norm{u}_{L^{p}(I;U)}\Big]=0
\end{align*}
This implies
\begin{equation}
    \lim_{N\rightarrow \infty}\text{inf}\frac{1}{N}\norm{\Lambda_{n}(B^{N}_{n})}=0
\end{equation}
It remains to prove that there exists $N_{0}\in \mathbb{N}$ such that $\Lambda_{n}(B^{N_0}_{n})\subset B^{N_{0}}_{n}$ for every $n\in \mathbb{N}$. Suppose, by
contradiction, that for every $N\in \mathbb{N}$ there exists $\overline{n}\in \mathbb{N}$ and $y\in \Lambda_{\overline{n}}(q)$, $q\in B^{N}_{\overline{n}}$ such that $y\notin B^{N}_{\overline{n}}$. By the fact $y\notin B^{N}_{\overline{n}}$ we have 
\begin{equation*}
    N<\norm{\Lambda_{\overline{n}}(q)}\leq M\norm{x_{0}}+M\norm{g(B^{N}_{n})}+\frac{M\kappa_1}{\Gamma(\alpha)}\norm{\eta_{N}}_{L^{\frac{1}{\alpha_1}}(I;\mathbb{R}^+)} +\frac{M\kappa_2}{\Gamma(\alpha)}\norm{\mathbb{B}}\norm{u}_{L^{p}(I;U)},
\end{equation*}
which implies
\begin{equation*}
    1<\frac{1}{N}\norm{\Lambda_{\overline{n}}(q)}\leq \frac{M\norm{x_{0}}+M\norm{g(B^{N}_{n})}+\frac{M}{\Gamma(\alpha)}\kappa_1\norm{\eta_{N}}_{L^{\frac{1}{\alpha_1}}(I;\mathbb{R}^+)}}{N} +\frac{\frac{M}{\Gamma(\alpha)}\norm{\mathbb{B}}\kappa_2\norm{u}_{L^{p}(I;U)}}{N}.
\end{equation*}
We reach a contradiction by passing the limit as $N\rightarrow \infty$. This concludes that the map $\Lambda_{n}$ maps $B^{N_0}_{n}$ into $B^{N_0}_{n}$ for each $n\in \mathbb{N}$.\\
\vspace{0.5cm}

\textbf{\textit{Step-(ii)}:} In this step we show the map $\Lambda_{n}$ maps the set $B^{N_0}_{n}$ into a relatively compact set in $C(I,X_{n})$. Let $y_{m}\in \Lambda_{n}(q_{m})$, $q_{m}\in B^{N_0}_{n}$, then the following equation holds for $t\in I$ : 
\begin{equation*}
    y_{m}(t)= P_{n}\mathcal{S}_{\alpha}(t)(x_{0}+w_{m}) + \int^{t}_{0}(t-s)^{\alpha-1}P_{n}\mathcal{T}_{\alpha}(t-s)P_{n}f_{m}(s)ds 
    \end{equation*}
    \begin{equation}\label{EQE}
         + \int^{t}_{0}(t-s)^{\alpha-1}P_{n}\mathcal{T}_{\alpha}(t-s)Bu(s)ds, ~ t\in I, 
    \end{equation}
    where $w_{m}\in g(q_{m})$ and $f_{m}\in S_{F}(q_{m})$ for all $m\in \mathbb{N}$.
Owing to Hypothesis \textbf{(g1)} we confirm that the sequence $\{w_{m}\}_{m\in \mathbb{N}}$ is bounded in $X$. As $X$ is reflexive, we conclude that $w_{m}\rightharpoonup w$ in $X$ up to a subsequence. Noting that the operator $\mathcal{S}_{\alpha}(t)$ is bounded linear, by virtue of Proposition \ref{Prop2} we obtain 
\begin{equation}\label{42}
    P_{n}\mathcal{S}_{\alpha}(t)(x_{0}+w_{m})\rightarrow P_{n}\mathcal{S}_{\alpha}(t)(x_{0}+w), ~ as ~ m\rightarrow \infty.
\end{equation}
From the fact that $f_{m}\in S_{F}(q_{m})$ and Hypothesis $\textbf{(F2)}$ we conclude that the sequence $\{f_{m}\}_{m\in \mathbb{N}}$ is
integrably bounded in $L^{2}(I,X)$. In accordance with the reflexivity of the Banach space $X$, we conclude that $\{f_{m}\}_{m\in \mathbb{N}}$ is weakly relatively compact in $L^2(I, X)$. Hence, $f_{m}\rightharpoonup f$ in $L^2(I,X)$ up to a subsequence.\\
We define $h_{m}\in L^2(I,X)$ such that $h_{m}(t)=P_{n}f_{m}(t)$ for $t\in I$, and $m\in \mathbb{N}$. For almost every $t\in I$, the set $\{h_{m}(t):m\in \mathbb{N}\}$ is bounded and lies within the finite-dimensional space $X_{n}$. Hence, it
is relatively compact in both $X_{n}$ and $X$. Additionally, $\{h_{m}\}_{m\in\mathbb{N}}$ is integrably bounded as $\{f_{m}\}_{m\in \mathbb{N}}$  is integrably bounded. Therefore, based on the Definition \ref{semicompact}, $\{h_{m}\}$ is a semicompact sequence. Further, $\{h_{m}\}$ converges weakly to $\tilde{f}$ in $L^2(I,X)$, where $\tilde{f}(t)=P_{n}f(t)$, a.a. $t\in I$. In fact, the function $P_{n}:X\rightarrow X_{n}$ is linear and bounded, hence the functional $\mathbb{P}_{n}:L^2(I,X)\rightarrow L^2(I,X)$ is defined by 
\begin{equation}
    (\mathbb{P}_{n}f)(t) = P_{n}f(t), ~ t\in I
\end{equation}
is also linear and bounded. Therefore, for every $f^*$ in the dual of $L^2(I,X)$ the map $\Psi= f^*\circ\mathbb{P}_{n}$ is in the dual of $L^2(I,X)$ too and 
\begin{align*}
    f^*\circ(\mathbb{P}_{n}(f_{m})) = \Psi(f_{m})\rightarrow \Psi(f) = f^*\circ(\mathbb{P}_{n}(f)), ~ \text{as} ~ m\rightarrow \infty.
\end{align*}
This proves that 
\begin{align*}
    h_{m}= \mathbb{P}_{n}(f_{m})\rightharpoonup \mathbb{P}_{n}(f) = \tilde{f}\in L^2(I,X) ~ \text{as} ~ m\rightarrow \infty
\end{align*}
Therefore, utilizing Lemma \ref{GG} together with Corollay \ref{Colo6.6} (see Appendix section), we can conclude that: 
\begin{equation}\label{44}
    \int ^{t}_{0}(t-s)^{\alpha-1}P_{n}\mathcal{T}_{\alpha}(t-s)P_{n}f_{m}(s)ds \rightarrow \int^{t}_{0}(t-s)^{\alpha-1}P_{n}\mathcal{T}_{\alpha}(t-s)P_{n}f(s)ds ~\text{as} ~ m\rightarrow \infty
\end{equation}
uniformly in $C(I,X)$.\\
Therefore, using \eqref{42} and \eqref{44}, passing limit in \eqref{EQE} we obtain 
\begin{equation*}
    y_{m}(t)\rightarrow y(t)= P_{n}\mathcal{S}_{\alpha}(t)(x_{0}+w) + \int^{t}_{0}(t-s)^{\alpha-1}P_{n}\mathcal{T}_{\alpha}(t-s)P_{n}f(s)ds  
    \end{equation*}
    \begin{equation}
        + \int^{t}_{0}(t-s)^{\alpha-1}P_{n}\mathcal{T}_{\alpha}(t-s)Bu(s)ds, ~ t\in I, 
    \end{equation}
    uniformly in $C(I,X_{n})$, implying that the set $\Lambda_{n}(B^{N_0}_{n})$ is relatively compact in $C(I,X_{n})$.\\
    Define 
    \begin{equation}
        \mathcal{Q}_{n}=\overline{conv}\Lambda_{n}(B^{N_0}_{n})
    \end{equation}
    where $\overline{conv}$ stands for the closed convex hull of a set. Then by Theorem \ref{2.13} the set $\mathcal{Q}_{n}$ is compact in $C(I,X_{n})$ and
\begin{equation}
    \Lambda_{n}(\mathcal{Q}_{n})\subset\mathcal{Q}_{n}
\end{equation}
\textbf{\textit{Step-(iii)}:} It remains to show that the multimap $\Lambda_{n}$ is upper semicontinuous. Observe that the multimap $\Lambda_{n}:\mathcal{Q}_{n}\multimap \mathcal{Q}_{n}$ is locally compact, that means $\Lambda_{n}$ is compact around some neighbourhood of every point. Therefore, by virtue of the Proposition \ref{prop2.10} it is sufficient to prove that the multimap
$\Lambda_{n}$ has a closed graph. For this consider $\{q_{m}\}_{m\in \mathbb{N}}\subset \mathcal{Q}_{n}$, $\{y_{m}\}_{m\in \mathbb{N}}$ be such that $y_{m}\in \Lambda_{n}(q_{m})$ for all $m\in \mathbb{N}$ and $y_{m}\rightarrow y$, $q_{m}\rightarrow q$ in $C(I,X_{n})$. we prove $y\in \Lambda_{n}(q)$. The fact that $y_{m}\in \Lambda_{n}(q_{m})$ implies 
\begin{equation*}
    y_{m}(t) = P_{n}\mathcal{S}_{\alpha}(x_{0}+w_{m}) + \int^{t}_{0}(t-s)^{\alpha-1}P_{n}\mathcal{T}_{\alpha}(t-s)P_{n}f_{m}(s)ds 
\end{equation*}
\begin{equation}\label{EQR}
    + \int^{t}_{0}(t-s)^{\alpha-1}\mathcal{T}_{\alpha}(t-s)Bu(s)ds, ~ t\in I, 
\end{equation}
where $w_{m}\in g(q_{m})$ and $f_{m}\in S_{F}(q_{m})$ for all $m\in \mathbb{N}$. In a similar proof as in \textbf{\textit{Step-(ii)}} we can show
that $w_{m}\rightharpoonup w$ in $X$ up to a subsequence and $f_{m}\rightharpoonup f$ in $L^2(I,X)$ up to a subsequence. Then passing
limit in \eqref{EQR} as $m\rightarrow \infty$ we obtain
\begin{equation*}
    y_{m}(t) \rightarrow y(t) = P_{n}\mathcal{S}_{\alpha}(t)(x_0+w)+\int^{t}_{0}(t-s)^{\alpha-1}P_{n}\mathcal{T}_{\alpha}(t-s)P_{n}f(s)ds 
\end{equation*}
\begin{equation}
    + \int^{t}_{0}P_{n}\mathcal{T}_{\alpha}(t-s)Bu(s)ds, ~ t\in I
\end{equation}
Observe that, by Hypothesis \textbf{(g2)}, the multimap $g$ has a weakly weakly closed graph. Therefore, it follows from $w_{m}\in g(q_{m})$ with $w_{m}\rightharpoonup w$ and $q_{m}\rightarrow q$ that $w\in g(q)$. Moreover, by Theorem \ref{SFT} we conclude that $f\in S_{F}(q)$. Thus $y\in \Lambda_{n}(q)$ and hence the multimap $\Lambda_{n}$ has a closed graph.\\

\textbf{\textit{Step-(iv)}:} Thus, following \textbf{\textit{Step-(i) - Step-(iii)}}, it follows that the multimap $\Lambda_{n}:\mathcal{Q}_{n}\multimap \mathcal{Q}_{n}$ is
upper semicontinuous, where $\mathcal{Q}_{n}\subset C(I,X_{n})$ is compact, convex. Moreover, it is not difficult to show that the multimap $\Lambda_{n}$ has closed and convex values. 
Hence, the multimap $\Lambda_{n}$ satisfies all the conditions of Glicksberg Ky Fan Fixed Point Theorem \ref{fixed}. Therefore, by Theorem \ref{fixed} we have a fixed point of the map $\Lambda_{n}$ for each $n\in \mathbb{N}$, say $q_{n}\in \mathcal{Q}_{n}$. The definition of the map $\Lambda_{n}$ gives
\begin{equation*}
    q_{n}(t) = P_{n}\mathcal{S}_{\alpha}(t)(x_0+w_{n}) + \int^{t}_{0}(t-s)^{\alpha-1}P_{n}\mathcal{T}_{\alpha}(t-s)P_{n}f(s)ds 
\end{equation*}
\begin{equation}\label{EQo}
   + \int^{t}_{0}(t-s)^{\alpha-1}P_{n}\mathcal{T}_{\alpha}(t-s)Bu(s)ds, ~ t\in I 
\end{equation}
In the above $w_{n}\in g(q_{n})$, $f_{n}\in S_{F}(q_{n})$ for all $n\in \mathbb{N}$. We prove that there exists a subsequence $\{q_{n_k}\}_{k\in \mathbb{N}}$ of $\{q_{n}\}_{n\in \mathbb{N}}$ such that, $q_{n_k}$ converges weakly to $q\in C(I,X)$ where 
\begin{equation*}\label{52}
    q(t)= \mathcal{S}_{\alpha}(t)(x_0+w)+\int^{t}_{0}(t-s)^{\alpha-1}\mathcal{T}_{\alpha}(t-s)f(s)ds 
\end{equation*}
\begin{equation}
    +\int^{t}_{0}(t-s)^{\alpha-1}\mathcal{T}_{\alpha}(t-s)Bu(s)ds, ~ \text{for every} ~ t\in I, 
\end{equation}
 where $w\in g(q), f\in S_{F}(q)$. Considering hypothesis (\textbf{g1}), we observe that $\{w_{n}\}_{n\in\mathbb{N}}\subset X$ is bounded
in $X$ and $X$ is reflexive, so there exists $w\in X$ such that $w_{n}\rightharpoonup w$ in $X$. According to the reflexivity of $X$ together with Proposition \ref{Prop2} we have 
\begin{equation}\label{53}
    P_{n}\mathcal{S}_{\alpha}(t)(x_0+w_n)\rightharpoonup \mathcal{S}_{\alpha}(t)(x_0+w) ~ \text{in} ~ X
\end{equation}
Additionally, based on Hypothesis (\textbf{F2}), the sequence $\{f_{n}\}_{n\in\mathbb{N}}$ is integrably bounded and hence $\{f_{n}\}_{n\in \mathbb{N}}\subset L^2(I,X)$ is weakly relatively compact in $L^2(I,X)$. Therefore, there exists
$f\in L^2(I,X)$ such that $f_{n}\rightharpoonup f$ in $L^2(I,X)$. In view of Lemma \ref{Lem6.3}
(can be found in Appendix
section) we obtain $\mathbb{P}_{n}f_{n}\rightharpoonup f$ in $L^2(I,X)$, where $\mathbb{P}_{n}$ is given by \eqref{117}. Therefore, by Theorem \ref{Thm6.1}
we obtain
\begin{equation}\label{54}
    \int^{t}_{0}(t-s)^{\alpha-1}P_{n}\mathcal{T}_{\alpha}(t-s)P_{n}f_{n}(s)ds\rightharpoonup \int^{t}_{0}(t-s)^{\alpha-1}\mathcal{T}_{\alpha}(t-s)f(s)ds
\end{equation}
Combining equations \eqref{53}, \eqref{54} together we obtain from \eqref{EQo}
\begin{equation*}
    q_{n}(t)\rightharpoonup q(t)= \mathcal{S}_{\alpha}(t)(x_0+w)+\int^{t}_{0}(t-s)^{\alpha-1}\mathcal{T}_{\alpha}(t-s)f(s)ds 
\end{equation*}
\begin{equation}
    +\int^{t}_{0}(t-s)^{\alpha-1}\mathcal{T}_{\alpha}(t-s)Bu(s)ds, ~ t\in I.
\end{equation}
The fact that $q_{n}(t)\rightharpoonup q(t)$ in $X$ for every $t\in I$ and $\{q_{n}\}_{n\in \mathbb{N}}\subset C(I,X)$ is bounded, we conclude that
$q_{n}\rightharpoonup q$ in $C(I,X)$. As $w_{n}\rightharpoonup w$ in $X$, and $w_{n}\in g(q_{n})$, we can use hypothesis (\textbf{g2}) to conclude that $w\in g(q)$. Additionally, since $f_{n}\rightharpoonup f$ in $L^2(I,X)$, $q_{n}(t)\rightharpoonup q(t)$ in $X$, and $f_{n}\in S_{F}(q_n)$, we can apply Theorem \ref{SFT} to establish that $f\in S_{F}(q)$. In summary, $q$ is a mild solution to the problem \eqref{1.1}.

\end{proof}
\section{Partially Null Controllability:}This section is dedicated to examining the $L^p$- partially null controllability of the fractional-order control problem \eqref{1.1}. Here control space $U$ is a uniformly convex Banach space. This section is organized into three subsections. \\
\begin{itemize}
    \item (i) In First subsection, we construct the control to drive system \eqref{1.1} from $x_{0}$ to $0$.
    \item (ii) In second subsection, we sketch the idea and novelty of the proof.
    \item (iii) In third subsection, we applies this control to examine the partially null controllability of problem \eqref{1.1}.
\end{itemize}

\subsection{Construction of the Control}\label{Sub4.1}
The goal of this subsection is to construct a control that directs the solutions of system \eqref{1.1}  from any initial state to the zero state. We consider operator $\mathcal{W}: L^p(I, U)\to X$ which is defined as in \eqref{ControlW}. Observe that the operator $\mathcal{W}$ is a bounded linear operator, though it may not be surjective. \\
 
Consider the fractional-order linear control system with additive source term
\begin{equation}\label{L1}
	\begin{cases}
		^{C}D^{\alpha}_tq(t)=\mathbb{A}q(t)+\mathbb{B}u(t)+f(t), ~ t\in I;\\
		q(0)=x_0.
	\end{cases}
\end{equation}
We define $\mathcal{Z}: X\times L^2(I,X)\to X$ by
\begin{equation*}
	\mathcal{Z}(x_0,f)=\mathcal{S}_{\alpha}(\nu)x_0+\int_{0}^{\nu}(\nu-s)^{\alpha-1}\mathcal{T}_{\alpha}(\nu-s)f(s)ds, ~ x_0\in X, ~ f\in L^2(I,X).
\end{equation*}
\begin{definition}\label{DD4.1}
    We say that the linear fractional-order control system with source term \eqref{L1} is partially null controllable if for any initial state $x_{0}\in X$, there exists a mild solution $q\in C(I,X)$ of \eqref{1.1} corresponding to a suitable control $u\in L^{p}(I,U)$ such that $q(\nu)= 0$, that means
    \begin{equation}
		q(\nu)=\mathcal{S}_{\alpha}(\nu)x_0+\int_{0}^{\nu}(\nu-s)^{\alpha-1}\mathcal{T}_{\alpha}(\nu-s)f(s)ds+\int_{0}^{\nu}(\nu-s)^{\alpha-1}\mathcal{T}_{\alpha}(\nu-s)\mathbb{B}u(s)ds=0,
	\end{equation}
\end{definition}

The following proposition provides an alternate condition for partially null controllability of the system \eqref{L1}.
\begin{proposition}
	The system \eqref{L1} is partially null controllable on $I$ if $Im(\mathcal{Z})\subset ~Im(\mathcal{W})$.	
\end{proposition}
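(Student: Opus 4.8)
The plan is to read the geometric inclusion $Im(\mathcal{Z})\subset Im(\mathcal{W})$ directly into the existence statement of Definition \ref{DD4.1}. First I would rewrite the terminal condition: evaluating the mild solution of \eqref{L1} at $t=\nu$ and using the definitions of $\mathcal{Z}$ and $\mathcal{W}$, the requirement $q(\nu)=0$ becomes precisely $\mathcal{Z}(x_0,f)+\mathcal{W}(u)=0$, i.e. $\mathcal{W}(u)=-\mathcal{Z}(x_0,f)$. Thus establishing partial null controllability reduces to showing that, for every $x_0\in X$ and every source term $f\in L^2(I,X)$, the vector $-\mathcal{Z}(x_0,f)$ belongs to the range of $\mathcal{W}$.

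Next I would invoke linearity. Since $\mathcal{W}\in\mathcal{L}(L^p(I,U),X)$, its range $Im(\mathcal{W})$ is a linear subspace of $X$; in particular it is closed under scalar multiplication, so $y\in Im(\mathcal{W})$ forces $-y\in Im(\mathcal{W})$. Fixing arbitrary $x_0\in X$ and $f\in L^2(I,X)$, we have $\mathcal{Z}(x_0,f)\in Im(\mathcal{Z})$, and the hypothesis $Im(\mathcal{Z})\subset Im(\mathcal{W})$ yields $\mathcal{Z}(x_0,f)\in Im(\mathcal{W})$. By the subspace property $-\mathcal{Z}(x_0,f)\in Im(\mathcal{W})$ as well, so there exists $u\in L^p(I,U)$ with $\mathcal{W}(u)=-\mathcal{Z}(x_0,f)$.

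Finally I would substitute this control into the terminal-value formula to obtain $q(\nu)=\mathcal{Z}(x_0,f)+\mathcal{W}(u)=0$, which is exactly the conclusion demanded by Definition \ref{DD4.1}; hence \eqref{L1} is partially null controllable.

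The argument is a direct unpacking of the definitions, so I do not expect a genuine analytic obstacle. The only point needing a moment's attention is the sign mismatch---controllability requires $-\mathcal{Z}(x_0,f)$ in the range while the hypothesis is stated for $\mathcal{Z}(x_0,f)$---which is dispatched at once by the linearity of $\mathcal{W}$. It is also worth recording that the inclusion of images handles all admissible source terms $f$ simultaneously, which is why a single geometric condition suffices (and why the stated hypothesis is in fact stronger than what a fixed $f$ would require).
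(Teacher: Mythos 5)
Your proof is correct and is essentially the argument the paper has in mind: the paper states this proposition without proof, treating it as immediate from Definition \ref{DD4.1} (indeed, Definition \ref{Defc} already records the equivalence between the terminal condition $q(\nu)=0$ and the inclusion $Im(\mathcal{Z})\subset Im(\mathcal{W})$), and your unpacking via $\mathcal{W}(u)=-\mathcal{Z}(x_0,f)$ together with the subspace property of $Im(\mathcal{W})$ is exactly that reasoning made explicit. Your closing remark that the hypothesis covers all source terms $f$ at once, and is therefore stronger than what any single fixed $f$ would need, is a correct and worthwhile observation.
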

Consider the fractional-order linear control system with source term represented by equation \eqref{L1} is partially null controllable in $I$. Define the kernel of an operator $\mathcal{W}$.
\begin{equation*}
	Ker~\mathcal{W}=\{u\in L^p(I,U): \mathcal{W}(u)=0\}.
\end{equation*}
As $\mathcal{W}$ is bounded and linear, the kernel $Ker(\mathcal{W})$ is a closed subspace of $L^p(I,U)$, and the quotient space $L^p(I,U)/Ker~\mathcal{W}$ is a Banach space with the induced norm  
 
\begin{equation*}
	\norm{[u]}_0=\inf_{v\in [u]}\norm{v}, [u]=\{v\in L^p(I,U): \mathcal{W}(v)=\mathcal{W}(u)\}=u+Ker~\mathcal{W}.
\end{equation*}
Define $\tilde{\mathcal{W}}:L^p(I,U)/Ker~\mathcal{W}\to X$ by
\begin{equation*}
	\tilde{\mathcal{W}}([u])=\mathcal{W}(u), [u]\in L^p(I,U)/ Ker~(\mathcal{W}).
\end{equation*}
Thus, $\tilde{\mathcal{W}}$ is well-defined, linear and injective. Additionaly, $\tilde{\mathcal{W}}$ is bounded, and we conclude that $\norm{\tilde{\mathcal{W}}}=\norm{\mathcal{W}}$. 
 
Next we define $\Pi: L^p(I,U)/Ker~\mathcal{W}\to L^p(I,U)$ such that 
\begin{equation*}
	\mathcal{W}(\Pi([u]))=\mathcal{W}(u),~\text{and}~ \norm{\Pi([u])}=\min\{\norm{v}: \mathcal{W}(u)=\mathcal{W}(v)\}.
\end{equation*} 
Clearly, $\Pi$ is continuous (see \cite{malaguti2021begin}). We further define
\begin{equation}\label{Control}
	\mathcal{W}^{-1}=\Pi\circ \tilde{\mathcal{W}}^{-1}:~Im~(\mathcal{W})\to L^p(I,U).
\end{equation}
One can easily observe that $\mathcal{W}\circ \mathcal{W}^{-1}(x)=x$ for all $x\in Im~(\mathcal{W})$, with $\mathcal{W}^{-1}$ being continuous, though it is not necessarily linear (see \cite{malaguti2019exact} and \cite{malaguti2021begin} ).

We construct $u\in L^p(I,U)$ as 
\begin{equation*}
	u=-\mathcal{W}^{-1}\left[\mathcal{S}_{\alpha}(\nu)x_0+\int_{0}^{\nu}(\nu-s)^{\alpha-1}\mathcal{T}_{\alpha}(\nu-s)f(s)ds\right].
\end{equation*}
The constructed control ensures the transferring  of the system \eqref{L1} from the initial state $x_0$ to the final state $0$. Consequently, we propose the following proposition.

\begin{proposition}
	Suppose the system \eqref{L1} is partially null controllable on $I$. Then the control
	\begin{equation}\label{ct3.4}
		u=-\mathcal{W}^{-1}\left[\mathcal{S}_{\alpha}(\nu)x_0+\int_{0}^{\nu}(\nu-s)^{\alpha-1}\mathcal{T}_{\alpha}(\nu-s)f(s)ds\right],
	\end{equation} 
	transfers the system \eqref{L1} from $x_0$ to $0$.
\end{proposition}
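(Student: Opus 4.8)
The plan is to verify directly that the control \eqref{ct3.4} steers the terminal state to zero, by recognizing the bracketed quantity as the value $\mathcal{Z}(x_0,f)$ and then exploiting the right-inverse property of $\mathcal{W}^{-1}$ established above. First I would observe that the argument of $\mathcal{W}^{-1}$ in \eqref{ct3.4} is exactly $\mathcal{Z}(x_0,f)$, where $\mathcal{Z}$ is the operator defined just before Definition \ref{DD4.1}. Since the system \eqref{L1} is assumed partially null controllable, the preceding proposition gives $Im(\mathcal{Z})\subset Im(\mathcal{W})$; in particular $\mathcal{Z}(x_0,f)\in Im(\mathcal{W})$, so $\mathcal{W}^{-1}$ may legitimately be applied to it and the control $u=-\mathcal{W}^{-1}[\mathcal{Z}(x_0,f)]\in L^p(I,U)$ is well-defined.

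Next I would evaluate the state at $t=\nu$. By Definition \ref{DD4.1} the terminal value is $q(\nu)=\mathcal{S}_{\alpha}(\nu)x_0+\int_{0}^{\nu}(\nu-s)^{\alpha-1}\mathcal{T}_{\alpha}(\nu-s)f(s)\,ds+\int_{0}^{\nu}(\nu-s)^{\alpha-1}\mathcal{T}_{\alpha}(\nu-s)\mathbb{B}u(s)\,ds$, which, comparing with the definitions of $\mathcal{Z}$ and $\mathcal{W}$, is precisely $q(\nu)=\mathcal{Z}(x_0,f)+\mathcal{W}(u)$. Substituting the constructed control, and using that $\mathcal{W}$ is a bounded \emph{linear} operator (so the minus sign may be pulled outside) together with the identity $\mathcal{W}\circ\mathcal{W}^{-1}(x)=x$ valid for every $x\in Im(\mathcal{W})$, I obtain $q(\nu)=\mathcal{Z}(x_0,f)-\mathcal{W}\bigl(\mathcal{W}^{-1}[\mathcal{Z}(x_0,f)]\bigr)=\mathcal{Z}(x_0,f)-\mathcal{Z}(x_0,f)=0$, which is the assertion.

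The cancellation itself is a single line, so there is no genuine analytic obstacle here. The only point requiring care is that $\mathcal{W}^{-1}$ is merely a continuous right inverse and is \emph{not} linear (as noted when $\mathcal{W}^{-1}$ was introduced via \eqref{Control}); consequently I must factor the sign through the linear map $\mathcal{W}$ rather than through $\mathcal{W}^{-1}$, and I must invoke the partial null controllability hypothesis to ensure that $\mathcal{Z}(x_0,f)$ lies in the domain $Im(\mathcal{W})$ on which $\mathcal{W}^{-1}$ is defined and satisfies $\mathcal{W}\circ\mathcal{W}^{-1}=\mathrm{id}$.
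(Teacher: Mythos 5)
Your proof is correct and matches the paper's intended argument exactly: the paper establishes the proposition by the very construction preceding it, namely that $q(\nu)=\mathcal{Z}(x_0,f)+\mathcal{W}(u)$ and $\mathcal{W}\circ\mathcal{W}^{-1}(x)=x$ on $Im(\mathcal{W})$, with well-definedness of $u$ guaranteed by $Im(\mathcal{Z})\subset Im(\mathcal{W})$ (as the paper notes in the remark following the proposition). Your added care about factoring the sign through the linear map $\mathcal{W}$ rather than the nonlinear $\mathcal{W}^{-1}$ is a correct and welcome precision, but it does not change the route.
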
 
\begin{remark}
According to Definition \ref{DD4.1}, the control $u$ in \eqref{ct3.4} is well-defined because $Im(\mathcal{Z})\subset Im(\mathcal{W})$.
\end{remark}
\begin{remark}
If the map $\mathcal{W}$ specified in \eqref{ControlW} is surjective, one can define the inverse map $\mathcal{W}^{-1}: X\to L^p(I,U)$, which is the right inverse of the map $\mathcal{W}$.  
	
\end{remark}
\begin{remark}
For a separable Hilbert space $U$, the mapping $\mathcal{W}^{-1}$ is both linear and bounded, as analyzed in \cite{MR2098245}.
	
\end{remark}

\subsection{\textbf{Idea and Novelty of the proof}} This subsection highlighting the idea and novelty of the proof. The standard approach used to establish the partially null controllability result is as follows. For each $n\in \mathbb{N}$, define the map $\Sigma_{n}: C(I,X_{n})\multimap C(I,X_{n})$ as follows: for $q\in C(I,X_{n})$ the set $\Sigma_{n}(q)$ consists all those $y\in C(I,X_{n})$ such that 
\begin{equation}
    y(t) = P_{n}\mathcal{S}_{\alpha}(t)(x_{0}+w)+\int^{t}_{0}(t-s)^{\alpha-1}P_{n}\mathcal{T}_{\alpha}(t-s)P_{n}f(s)ds +\int^{t}_{0}(t-s)^{\alpha-1}P_{n}\mathcal{T}_{\alpha}(t-s)\mathbb{B}u(s)ds,
\end{equation}
$t\in I$, where $w\in g(q), f\in S_{F}(q) ~\text{and} ~ u\in L^{p}(I,U)$ is given by 
\begin{equation}\label{Contrr}
    u=-\mathcal{W}^{-1}\mathcal{Z}_{n}(w,f)
\end{equation}
where 
\begin{equation}
    \mathcal{Z}_{n}(w,f) = \mathcal{S}_{\alpha}(\nu)P_{n}(x_{0}+w)+\int^{\nu}_{0}(\nu-s)^{\alpha-1}P_{n}\mathcal{T}_{\alpha}(\nu-s)P_{n}f(s)ds
\end{equation}

Where, $P_{n}:X\rightarrow X_{n}$ is the natural projection maps defined in \eqref{Proje}. As presented in equation \eqref{Contrr}, $\mathcal{W}^{-1}$ as the right inverse of the map $\mathcal{W}:L^{p}(I,U)\rightarrow X$. In a uniformly convex Banach space $U$, the mapping $\mathcal{W}^{-1}$ is continuous but not necessarily linear. Consequently, the nonlinearity in the control term \eqref{Contrr} leads to the non-convexity of the set $\Sigma_{n}(q)$. Based on current knowledge, most multivalued fixed point theorems guaranteeing existence results demand that the fixed point map takes convex values. Consequently, these fixed-point theorems cannot be utilized to demonstrate the existence of fixed points for the map $\Sigma_{n}$. To handle this issue, for every $n\in\mathbb{N}$, we fix an element $w_{n}\in X$ and define a map $ \Upsilon_{n}: L^2(I,X)\rightarrow C(I,X)$ as follows: for $f\in L^2(I,X)$ we define
\begin{equation*}
    (\Upsilon_{n}f)(t)=P_{n}\mathcal{S}_{\alpha}(t)(x_{0}+w_{n})+\int^{t}_{0}(t-s)^{\alpha-1}P_{n}\mathcal{T}_{\alpha}(t-s)P_{n}f(s)ds
    \end{equation*}
    \begin{equation}
     +\int^{t}_{0}(t-s)^{\alpha-1}P_{n}\mathcal{T}_{\alpha}(t-s)\mathbb{B}u(s)ds, ~ t\in I
\end{equation}
where $u\in L^{p}(I,U)$ is given by 
\begin{equation}
    u= - \mathcal{W}^{-1}\mathcal{Z}_{n}(w_{n},f)
\end{equation}
where
\begin{equation}
    \mathcal{Z}_{n}(w_{n},f) = \mathcal{S}_{\alpha}(\nu)P_{n}(x_{0}+w_{n})+\int^{\nu}_{0}(\nu-s)^{\alpha-1}\mathcal{T}_{\alpha}(\nu-s)P_{n}f(s)ds
\end{equation}
We establish the continuity of the map $\Upsilon_{n}$ from $w-L^2(I,X)$ into $C(I,X)$. Subsequently, we consider the map $S_{F}: C(I,X)\multimap L^2(I,X)$ as defined in equation \eqref{selection}.  Then define $\Gamma_{n} : L^2(I,X)\multimap L^2(I,X)$ as 
\begin{equation}
    \Gamma_{n}(f)=S_{F}(\Upsilon_{n}(f)), ~ f\in L^2(I,X)
\end{equation}
Given that the multimap $\Gamma_{n}$ has convex values. Therefore, by introducing this map, we can resolve the problem of convexity by defining this map.\\
The partially null controllability proof for problem \eqref{1.1} can be broken down into the following steps:

\textbf{\textit{Step-(i)}} We prove that for each $n\in \mathbb{N}$, $\Gamma_{n}$ possesses a fixed point $f_{n}\in L^2(I,X)$. Based on the definition of the multimap $\Gamma_{n}$, it follows that $f_{n}\in S_{F}(\Upsilon_{n}(f_{n})), ~ n\in \mathbb{N}$. Define $q_{n}=\Upsilon_{n}(f_{n})$. As a result, $f_{n}\in S_{F}(q_{n})$, and according to the definition of the map $\Upsilon_{n}$, it can be written as 
\begin{equation*}
    q_{n}(t) = P_{n}\mathcal{S}_{\alpha}(t)(x_{0}+w_{n})+\int^{t}_{0}(t-s)^{\alpha-1}P_{n}\mathcal{T}_{\alpha}(t-s)P_{n}f_{n}(s)ds
    \end{equation*}
    \begin{equation}
    +\int^{t}_{0}(t-s)^{\alpha-1}P_{n}\mathcal{T}_{\alpha}(t-s)\mathbb{B}u_{n}(s)ds, ~ t\in I, 
\end{equation}
where 
\begin{equation}
    u_{n} =-\mathcal{W}^{-1}\mathcal{Z}_{n}(w_{n},f_{n}),
\end{equation}
 and 
 \begin{equation}
     \mathcal{Z}_{n}(w_{n},f_{n}) = \mathcal{S}_{\alpha}(\nu)P_{n}(x_{0}+w_{n})+\int^{\nu}_{0}(\nu-s)^{\alpha-1}P_{n}\mathcal{T}_{\alpha}(\nu-s)P_{n}f_{n}(s)ds
 \end{equation}

 \textbf{\textit{Step-(ii)}} In \textbf{\textit{Step-(i)}}, the sequence $\{q_{n}\}_{n\in\mathbb{N}}$ contains a subsequence that converges weakly to a continuous mapping $q:I\rightarrow X$ such that
 \begin{equation}
     q(t) = \mathcal{S}_{\alpha}(t)(x_{0}+w)+\int^{t}_{0}(t-s)^{\alpha-1}\mathcal{T}_{\alpha}(t-s)f(s)ds+\int^{t}_{0}(t-s)^{\alpha-1}\mathcal{T}_{\alpha}(t-s)\mathbb{B}u(s)ds, ~ t\in I,
 \end{equation}
 where $w\in g(q), ~ f\in S_{F}(q)$ and satisfies $q(\nu)=0$.

\subsection{Main Result-Partially Null Control}
The $L^p$-partially null controllability of the problem \eqref{1.1} is explored in this subsection, based on the control introduced in Subsection \eqref{Sub4.1}. The next theorem establishes this controllability property.

\begin{theorem}\label{MR}
Suppose Hypotheses (\textbf{T}) and (\textbf{U}) hold true. Further, consider that the multivalued nonlinearity $F$ fulfills conditions \textbf{(F1)}-\textbf{(F4)}  and the nonlocal multifunction $g$ meets assumptions (\textbf{g1})-(\textbf{g2}). Under these assumptions, system  \eqref{1.1} achieves partially null controllability over the interval $I$. 
\end{theorem}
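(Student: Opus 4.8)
The plan is to realize the required control and mild solution as a weak limit of finite-dimensional (Galerkin-type) approximations on the subspaces $X_n=\mathrm{span}\{x_1,\dots,x_n\}$, using the decoupling device of the Idea subsection to bypass the loss of convexity caused by the nonlinear right inverse $\mathcal{W}^{-1}$ defined in \eqref{Control}. Because $U$ is merely uniformly convex (not a Hilbert space), the control $u=-\mathcal{W}^{-1}\mathcal{Z}_n(w,f)$ depends \emph{nonlinearly} on $(w,f)$, so the naive solution multimap $\Sigma_n$ fails to have convex values and Theorem~\ref{fixed} cannot be applied to it directly. The idea is therefore to push this nonlinearity into a \emph{single-valued} intermediate operator $\Upsilon_n$ and to keep all multivaluedness inside the convex-valued selection maps $S_F$ and $g$, so that the composed map again has convex values.

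First I would fix $n$, define the single-valued operator $\Upsilon_n$ carrying $f$ (and the nonlocal datum) to the associated approximate trajectory with the control $u=-\mathcal{W}^{-1}\mathcal{Z}_n(\cdot,f)$ built in, and show that $\Upsilon_n$ is continuous from $L^2(I,X)$ equipped with the weak topology into $C(I,X)$. The ingredients are the uniform bounds on $\mathcal{S}_\alpha,\mathcal{T}_\alpha$ from Lemma~\ref{Properties of Salpha Talpha}, the continuity of $\mathcal{W}^{-1}$, the observation that $\{P_nf_m(t)\}$ stays in the finite-dimensional space $X_n$ (so that $\mathbb{P}_nf_m$ is semicompact in the sense of Definition~\ref{semicompact}), the convolution convergence results proved in the Appendix, and the H\"older estimates yielding the constants $\kappa_1,\kappa_2$ already used in the existence proof. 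I would then set $\Gamma_n(f)=S_F(\Upsilon_n(f))$; by Theorem~\ref{Thm2.29} this has nonempty weakly compact convex values, and the weak-to-weak semicontinuity of $\Upsilon_n$ together with Theorem~\ref{SFT} makes $\Gamma_n$ weakly weakly upper semicontinuous. Using (F2) and (g1) I would produce a bounded convex set in $L^2(I,X)$---weakly compact by reflexivity and Corollary~\ref{BA}---that $\Gamma_n$ maps into itself, and apply Theorem~\ref{fixed} to obtain a fixed point $f_n$, whence $f_n\in S_F(q_n)$ with $q_n=\Upsilon_n(f_n)$. Since Hypothesis (U) ensures $\mathcal{Z}_n\in\mathrm{Im}(\mathcal{W})$ and $\mathcal{W}\mathcal{W}^{-1}=\mathrm{id}$ on $\mathrm{Im}(\mathcal{W})$, the control $u_n=-\mathcal{W}^{-1}\mathcal{Z}_n(w_n,f_n)$ cancels the free evolution at $t=\nu$, giving $q_n(\nu)=0$ for every $n$.

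To recover the nonlocal condition $w_n\in g(q_n)$ I would select $w_n$ jointly with $f_n$: the transparent way is to run the same fixed point argument for the product map $(w,f)\mapsto\big(P_ng(\Upsilon_n(w,f)),\,S_F(\Upsilon_n(w,f))\big)$ on $X_n\times L^2(I,X)$, whose values remain convex precisely because the $\mathcal{W}^{-1}$-nonlinearity is confined to the single-valued $\Upsilon_n$; a fixed point then furnishes $w_n\in g(q_n)$ and $f_n\in S_F(q_n)$ at once. For the limit passage I would extract $n$-uniform a priori bounds on $\{q_n\},\{f_n\},\{w_n\}$ from (F2), (g1) and the boundedness of $\mathcal{W}^{-1}$ on bounded sets, pass (by reflexivity) to subsequences with $w_n\rightharpoonup w$ in $X$, $f_n\rightharpoonup f$ in $L^2(I,X)$ and $q_n\rightharpoonup q$ in $C(I,X)$, and identify the limit via Proposition~\ref{Prop2} (so that $P_n\mathcal{S}_\alpha(t)(x_0+w_n)\rightharpoonup\mathcal{S}_\alpha(t)(x_0+w)$ and $\mathbb{P}_nf_n\rightharpoonup f$), the Appendix convergence theorems for the convolution terms, Theorem~\ref{SFT} to deduce $f\in S_F(q)$, and Hypothesis (g2) to deduce $w\in g(q)$.

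The main obstacle will be controlling the nonlinear inverse $\mathcal{W}^{-1}$ in the limit. Inside each level $n$ this is handled by the weak-to-strong continuity of $\Upsilon_n$, but across levels one must ensure the terminal condition survives, i.e. that $u_n=-\mathcal{W}^{-1}\mathcal{Z}_n(w_n,f_n)$ converges suitably even though $\mathcal{W}^{-1}$ is only continuous (neither linear nor weakly continuous). The key is to prove that $\mathcal{Z}_n(w_n,f_n)\to\mathcal{Z}(w,f)$ \emph{strongly} in $X$: the term $\mathcal{S}_\alpha(\nu)P_n(x_0+w_n)$ converges by Proposition~\ref{Prop2}, and the convolution term converges strongly thanks to the compactness furnished by the finite-dimensional projections together with the Appendix results. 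Continuity of $\mathcal{W}^{-1}$ then yields $u_n\to u=-\mathcal{W}^{-1}\mathcal{Z}(w,f)$ and preserves $\mathcal{W}u+\mathcal{Z}(w,f)=0$, that is $q(\nu)=0$. Upgrading the merely weak convergence of $f_n$ to the strong convergence of $\mathcal{Z}_n(w_n,f_n)$, while keeping the coupling $w_n\in g(q_n)$ consistent in the limit, is the delicate technical heart of the argument.
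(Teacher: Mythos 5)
Your overall architecture coincides with the paper's: absorb the nonlinear $\mathcal{W}^{-1}$ into a single-valued operator $\Upsilon_n$, compose with the convex-valued selection map to get $\Gamma_n(f)=S_F(\Upsilon_n(f))$, invoke Theorem \ref{fixed} on a weakly compact convex subset of $L^2(I,X)$, and pass to the weak limit using Proposition \ref{Prop2}, Lemma \ref{Lem6.3}, the Appendix convergence results, Theorem \ref{SFT} and (\textbf{g2}). Your suggestion to obtain $w_n\in g(q_n)$ by a joint fixed point for the product map on $X_n\times L^2(I,X)$ is in fact cleaner than the paper's treatment, which fixes $w_n\in g(B_C^N)$ at the outset and only later asserts $w_n\in g(q_n)$.

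However, your handling of the terminal condition contains a genuine gap, and it is precisely the point where the paper's proof is delicate. First, the claim that hypothesis (\textbf{U}) gives $\mathcal{Z}_n(w_n,f_n)\in \mathrm{Im}(\mathcal{W})$ \emph{and} that the control makes $q_n(\nu)=0$ for every $n$ cannot both hold: for $\mathcal{W}^{-1}\mathcal{Z}_n$ to be defined, $\mathcal{Z}_n$ must lie in $\mathrm{Im}(\mathcal{Z})$, which forces the paper's asymmetric definition $\mathcal{Z}_n(w_n,f)=\mathcal{S}_{\alpha}(\nu)P_n(x_0+w_n)+\int_0^\nu(\nu-s)^{\alpha-1}\mathcal{T}_{\alpha}(\nu-s)P_nf(s)\,ds$ (inner projections only, since $P_n$ does not commute with $\mathcal{S}_\alpha,\mathcal{T}_\alpha$). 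With this definition the control term in $q_n(\nu)$ equals $-P_n\mathcal{W}\mathcal{W}^{-1}\mathcal{Z}_n=-P_n\mathcal{Z}_n$, and the convolution terms cancel but the initial terms do not: one is left with $q_n(\nu)=P_n\mathcal{S}_{\alpha}(\nu)(x_0+w_n)-P_n\mathcal{S}_{\alpha}(\nu)P_n(x_0+w_n)\neq 0$. Second, your proposed repair—proving $\mathcal{Z}_n(w_n,f_n)\to\mathcal{Z}(w,f)$ \emph{strongly} in $X$ and invoking norm continuity of $\mathcal{W}^{-1}$—fails: the compactness you cite from "finite-dimensional projections" is available only at fixed level $n$ (where $\{P_nf_m(t)\}_m$ sits in the fixed space $X_n$, as in the continuity step for $\Upsilon_n$); along the diagonal $n\to\infty$ the dimension grows, the semigroup is not assumed compact, and both $\mathcal{S}_{\alpha}(\nu)P_n(x_0+w_n)$ and $\int_0^\nu(\nu-s)^{\alpha-1}\mathcal{T}_{\alpha}(\nu-s)P_nf_n(s)\,ds$ converge only weakly (Proposition \ref{Prop2} and Theorem \ref{Thm6.1}); since the nonlinear $\mathcal{W}^{-1}$ is not weakly continuous, no conclusion about $u_n$ follows. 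The paper avoids this entirely: it uses only the boundedness of $\{u_n\}$ and reflexivity of $L^p(I,U)$ to get $u_n\rightharpoonup u$, and concludes $q(\nu)=0$ from the algebraic identity above, since the mismatch term $P_n\mathcal{S}_{\alpha}(\nu)(x_0+w_n)-P_n\mathcal{S}_{\alpha}(\nu)P_n(x_0+w_n)\rightharpoonup 0$ while $q_n(\nu)\rightharpoonup q(\nu)$. You would need to replace your strong-convergence step by this weak cancellation argument for the proof to close.
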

\begin{proof}
	Let us define three sets as given follows: 
	\begin{equation}
		B_N=\{x\in X: \norm{x}\le N\}, N>0,
	\end{equation}
	\begin{equation}
		B^N_C=\{q\in C(I,X): \norm{q(t)}\le N, t\in I\}, N>0,
	\end{equation}
	and 
	\begin{equation}
		Q^N_{F}=\{f(\cdot)\in L^2(I, X): \norm{f(t)}\le \eta_N(t)~\text{a.a.}~t\in I\},
	\end{equation}
	where $\eta_N(\cdot)$ is as in Hypothesis \textbf{(F2)} corresponding to the set $B_N$.
	Let $w_n\in g(B_C^N)\subset X$ be fixed for each $n\in \mathbb{N}$. The map $\Upsilon_n: Q^N_{F}\to C(I,X)$ is defined in the following way:
	For $f\in Q^N_{F}, \Upsilon_n (f)$ satisfies 
	\begin{equation*}
		(\Upsilon_n f)(t)=P_n\mathcal{S}_{\alpha}(t)(x_0+w_n)+\int_{0}^{t}(t-s)^{\alpha-1}P_n\mathcal{T}_{\alpha}(t-s)P_nf(s)ds
	\end{equation*} 
    \begin{equation}\label{A1}
        +\int_{0}^{t}(t-s)^{\alpha-1}P_n \mathcal{T}_{\alpha}(t-s)\mathbb{B}u(s)ds, t\in I,
    \end{equation}
	where $u\in L^p(I,U)$ is given by
	\begin{equation}\label{A2}
		u=-\mathcal{W}^{-1}\mathcal{Z}_n(w_n,f),
	\end{equation}
	where 
	\begin{equation}\label{A3}
		\mathcal{Z}_n(w_n,f)=\left[\mathcal{S}_{\alpha}(\nu)P_n(x_0+w_n)+\int_{0}^{\nu}(\nu-s)^{\alpha-1}\mathcal{T}_{\alpha}(\nu-s)P_nf(s)ds\right].
	\end{equation}
	The proof is concluded step by step. \\

	\textbf{\textit{Step-(i):}} We establish that the multimap $\Upsilon_n$ maps $Q^{N_0}_{F}$ into $B_C^{N_0}$ for some $N_0>0$ for each $n\in \mathbb{N}$. Let $y=\Upsilon_n(f), f\in Q^N_{F}$. According to the definition of the map $\Upsilon_n$, it follows that $y$ satisfies \eqref{A1}-\eqref{A3}.  We will now proceed with the computation by using hypothese  ($\textbf{F2}$) and H$\ddot{\text{o}}$lder's inequality,
	\begin{align}\label{3.3a}
		\norm{\mathcal{Z}_n(w_n,f)}\le M\norm{x_0+w_n}+\frac{M}{\Gamma(\alpha)}\kappa_{1}\norm{\eta_N}_{L^{\frac{1}{\alpha_1}}(I;\mathbb{R}^+)},
	\end{align}
	and consequently
	\begin{align*}	\norm{\mathcal{W}^{-1}\mathcal{Z}_n(w_n,f)}_{L^p(I,U)}=&\norm{\Pi\circ\tilde{\mathcal{W}}^{-1}\mathcal{Z}_n(w_n,f)}_{L^p(I,U)}
		=\norm{\tilde{\mathcal{W}}^{-1}\mathcal{Z}_n(w_n,f)}.
	\end{align*}
	Using the expression \eqref{3.3a} we obtain
	\begin{equation*}
		\norm{\mathcal{W}^{-1}\mathcal{Z}_n(w_n,f)}_{L^p(I,U)}\le \norm{\tilde{\mathcal{W}}^{-1}}\left[M\norm{x_0+w_n}+\frac{M}{\Gamma(\alpha)}\kappa_{1}\norm{\eta_N}_{L^\frac{1}{\alpha_1}(I;\mathbb{R}^+)}\right].
	\end{equation*}
	By the structure of the set $Q_F^N$ we have from \eqref{A1} and \eqref{A2},
	\begin{align*}
		\norm{y(t)}\le& \norm{P_n\mathcal{S}_{\alpha}(t)(x_0+w_n)}+\int_{0}^{t}(t-s)^{\alpha-1}\norm{P_n\mathcal{T}_{\alpha}(t-s)P_nf(s)}ds\\
        &+\int_{0}^{t}(t-s)^{\alpha-1}\norm{P_n\mathcal{T}_{\alpha}(t-s)\mathbb{B}u(s)}ds\\
		\le&M\norm{x_0+w_n}+\frac{M}{\Gamma(\alpha)}\kappa_{1}\norm{\eta_N}_{L^{\frac{1}{\alpha_1}}(I;\mathbb{R}^+)}\\
		&+\frac{M}{\Gamma(\alpha)}\norm{\mathbb{B}}\kappa_2\norm{\mathcal{W}^{-1}\mathcal{Z}_n(w_n,f)}_{L^p(I;U)}.
	\end{align*}
In the last inequality we use H$\ddot{\text{o}}$lder's inequality and $\frac{1}{p}+\frac{1}{p^{\prime}}=1$.\\
After substituting the value of $\norm{\mathcal{W}^{-1}\mathcal{Z}_n(w_n,f)}_{L^p(I,U)}$ into the expression above and simplifying, we arrive at:
	\begin{align*}
		\norm{y(t)}
		\le&M\norm{x_0}+M\norm{g(B_C^N)}\\
        &+\frac{M}{\Gamma(\alpha)}\norm{\mathbb{B}}\kappa_2\norm{\tilde{\mathcal{W}}^{-1}}\left[M\norm{x_0}+M\norm{g(B_C^N)}\right]\\
		&+\frac{M}{\Gamma(\alpha)}\kappa_{1}\left[1+\frac{M}{\Gamma(\alpha)}\norm{\mathbb{B}}\kappa_2\norm{\tilde{\mathcal{W}}^{-1}}\right]\norm{\eta_N}_{L^{\frac{1}{\alpha_1}}(I;\mathbb{R}^+)}\\
		\le&D_1+D_2\norm{\eta_N}_{L^{\frac{1}{\alpha_1}}(I;\mathbb{R}^+)}+D_3\norm{g(B_C^N)},
	\end{align*}
	where
	\begin{align*}
		D_1=&M\norm{x_0}\left[1+\frac{M}{\Gamma(\alpha)}\norm{\mathbb{B}}\kappa_{2}\norm{\tilde{W}^{-1}}\right];\\
		D_2=&\frac{M}{\Gamma(\alpha)}\kappa_{1}\left[1+\frac{M}{\Gamma(\alpha)}\kappa_{2}\norm{\mathbb{B}}\norm{\tilde{\mathcal{W}}^{-1}}\right];\\
		D_3=&M\left[1+\frac{M}{\Gamma(\alpha)}\kappa_{2}\norm{\mathbb{B}}\norm{\tilde{W}^{-1}}\right],
	\end{align*}
	and $$\norm{g(B_C^N)}=\sup\{\norm{w}: w\in g(q), q\in B_C^N\}.$$
	With (Proposition 3 of \cite{malaguti2021begin}) in mind and under the framework of Hypotheses (\textbf{F2}) and (\textbf{g1}) we conclude that
	\begin{equation}
		\liminf_{N\to \infty} \frac{1}{N}\norm{\eta_N}_{L^{\textcolor{red}{\frac{1}{\alpha_1}}}(I,\mathbb{R}^+)}=0~\text{and}~\lim_{N\to \infty} \frac{\norm{g(B_C^N)}}{N}=0.
	\end{equation}
	Considering this observation, we can conclude that
	\begin{equation}
		0\le \liminf_{N\to \infty}\frac{1}{N}\norm{\Upsilon_n(Q_F^N)}\le \frac{D_1+D_2\norm{\eta_N}_{L^\frac{1}{\alpha_1}(I,\mathbb{R}^+)}+D_3\norm{g(B_C^N)}}{N}=0.
	\end{equation}
	This implies
	\begin{equation}
		\liminf_{N\to \infty}\frac{1}{N}\norm{\Upsilon_n(Q_F^N)}=0.
	\end{equation}
	To complete the proof, it suffices to verify that there exists $N_0\in \mathbb{N}$ such that $\Upsilon_n(Q_F^{N_0})\subset B_C^{N_0}$ for every $n\in \mathbb{N}$. Suppose, by contradiction, that for every $N\in \mathbb{N}$ there exists $\bar{n}\in \mathbb{N}$ and $f_N\in Q_F^{N}$ such that $\Upsilon_{\bar{n}}(f_N)\notin B_C^{N}$. By the fact that  $\Upsilon_{\bar{n}}(f_N)\notin B_C^{N}$ we have
	\begin{equation}
		N<\norm{\Upsilon_{\bar{n}}(f_N)}<D_1+D_2 \norm{\eta_N}_{L^{\frac{1}{\alpha_1}}(I,\mathbb{R}^+)}+D_3 \norm{g(B_C^N)},
	\end{equation}
	which implies
	\begin{equation}
		1<\frac{1}{N}\norm{\Upsilon_{\bar{n}}(f_N)}<\frac{D_1+D_2 \norm{\eta_N}_{L^{\frac{1}{\alpha_1}}(I,\mathbb{R}^+)}+D_3 \norm{g(B_C^N)}}{N}.
	\end{equation}
	Taking the limit as $N\to \infty$ leads to a contradiction,  
	which demonstrates that the map $\Upsilon_{n}$ maps $Q_F^{N_0}$ into $B_C^{N_0}$ for each $n\in \mathbb{N}$.\\

	\textbf{\textit{Step-(ii):}} We verify the continuity of the mapping $\Upsilon_n$ from $w-Q^{N_0}_{F}$ into $C(I,X)$.  Let $\{f_m\}_{m\in \mathbb{N}}\subset Q^{N_0}_{F}$ be such that $f_m\rightharpoonup f$ in $L^2(I, X)$. Let $y_m=\Upsilon_n(f_m)$, then the following equation holds for $t\in I$:
	\begin{equation*}\label{3.4e}
		y_m(t)=P_n\mathcal{S}_{\alpha}(t)(x_0+w_n)+\int_{0}^{t}(t-s)^{\alpha-1}P_n\mathcal{T}_{\alpha}(t-s)P_nf_m(s)ds
	\end{equation*}
    \begin{equation}
        +\int_{0}^{t}(t-s)^{\alpha-1}P_n \mathcal{T}_{\alpha}(t-s)\mathbb{B}u_m(s)ds,
    \end{equation}
	$ t\in I,$ where $u_m\in L^p(I,U)$ is given by
	\begin{equation}\label{3.5}
		u_m=-\mathcal{W}^{-1}\mathcal{Z}_n(w_n,f_m),
	\end{equation}
	where
	\begin{equation}\label{3.61}
		\mathcal{Z}_n(w_n,f_m)=\mathcal{S}_{\alpha}(\nu)P_n(x_0+w_n)+\int_{0}^{\nu}(\nu-s)^{\alpha-1}\mathcal{T}_{\alpha}(\nu-s)P_nf_m(s)ds.
	\end{equation}
	We consider $h_m\in L^2(I, X)$ defined as $h_m(t)=P_nf_m(t)$ for $t\in I$, and $m\in \mathbb{N}$.
    The set $\{h_m(t): m\in \mathbb{N}\}$ is bounded for a.a $t\in I$ and is entirely contained in the finite-dimensional space $X_n$. As a result, it is relatively compact in both $X_n$ and $X$. Furthermore, since $f_m\in Q_F^{N_0}$, the sequence $\{h_m\}_{m\in \mathbb{N}}$ is uniformly integrable. According to the definition \ref{semicompact} $\{h_m\}_{m\in \mathbb{N}}$ is a semicompact sequence. Additionally, $h_m$ weakly converges to $\tilde{f}$ in $L^2(I, X)$, where $\tilde{f}(t)=P_nf(t),$ a.a. $t\in I$. Moreover, the functional $\mathbb{P}_n:L^2(I, X)\to L^2(I, X)$  is defined by
	\begin{equation}\label{Projection}
		(\mathbb{P}_nf)(t)=P_nf(t), t\in I.
	\end{equation}
It is a linear and bounded map because the function  $P_n:X\to X_n$ is linear and bounded. 
Therefore, for every $f^*$ in the dual of $L^2(I, X)$ the map $\Psi=f^* \circ\mathbb{P}_n$ is in the dual of $L^2(I, X)$ too and
	\begin{equation*}
		f^*\circ(\mathbb{P}_n(f_m))=\Psi(f_m)\to \Psi(f)=f^*\circ(\mathbb{P}_n(f)), ~\text{as}~ m\to \infty.
	\end{equation*}
	This proves that 
	\begin{equation*}
		h_m=\mathbb{P}_n(f_m)\rightharpoonup \mathbb{P}_n(f)=\tilde{f} ~\text{in}~ L^2(I, X)~\text{as}~m\to \infty.
	\end{equation*}
	By invoking Corrolary \eqref{Colo6.6} (see Appendix section) and equation \eqref{2.3}, we arrive at the conclusion that:
	\begin{equation}\label{3.8c}
		\int_{0}^{t}(t-s)^{\alpha-1}P_n\mathcal{T}_{\alpha}(t-s)P_nf_m(s)ds\to \int_{0}^{t}(t-s)^{\alpha-1}P_n\mathcal{T}_{\alpha}(t-s)P_nf(s)~\text{in}~X~\text{as}~m\to \infty,
	\end{equation}
	uniformly in $C(I,X_{n})$.
	
	By employing Theorem \eqref{Thm6.1} outlined in the Appendix, we can obtain
	\begin{equation}\label{3.9b}
		\int_{0}^{\nu}(\nu-s)^{\alpha-1}\mathcal{T}_{\alpha}(\nu-s)P_nf_m(s)ds\rightharpoonup \int_{0}^{\nu}(\nu-s)^{\alpha-1}\mathcal{T}_{\alpha}(\nu-s)P_nf(s)ds ~\text{in}~ X.
	\end{equation}
	On the basis of \eqref{2.2} together with \eqref{3.9b}  we obtain from \eqref{3.61},
	\begin{equation*}
		\mathcal{Z}_n(w_n,f_m)\to \mathcal{Z}_n(w_n,f)~\text{in}~  X~\text{as}~m\to \infty,
	\end{equation*}
	where $\mathcal{Z}_n(w_n,f)$ is given by \eqref{A3}.
	By the continuity of $\mathcal{W}^{-1}$ we have that
	\begin{equation*}
		u_m= - \mathcal{W}^{-1}(\mathcal{Z}_n(w_n,f_m))\to -\mathcal{W}^{-1} (\mathcal{Z}_n(w_n,f))=u,~\text{in}~ L^p(I,U)~\text{as}~m\to \infty.
	\end{equation*}
	We now estimate
	\begin{align*}
		&\norm{\int_{0}^{t}(t-s)^{\alpha-1}P_n\mathcal{T}_{\alpha}(t-s)\mathbb{B}\mathcal{W}^{-1}\mathcal{Z}_n(w_n,f_m)(s)ds-\int_{0}^{t}(t-s)^{\alpha-1}P_n\mathcal{T}_{\alpha}(t-s)\mathbb{B}\mathcal{W}^{-1}\mathcal{Z}_n(w_n,f)(s)ds}\\
		=&\norm{\int_{0}^{t}(t-s)^{\alpha-1}P_n\mathcal{T}_{\alpha}(t-s)\mathbb{B}(u_m(s)-u(s))ds}\\
		\le &\int_{0}^{t}(t-s)^{\alpha-1}\norm{P_nT(t-s)\mathbb{B}(u_m(s)-u(s))}ds\\
		\le &\frac{M}{\Gamma(\alpha)}\norm{\mathbb{B}}\int_{0}^{\nu}(\nu-s)^{\alpha-1}\norm{u_m(s)-u(s)}ds\\
		\le &\frac{M}{\Gamma(\alpha)}\norm{\mathbb{B}}\kappa_{2}\norm{u_m-u}_{L^p(I, U)}\to 0~\text{as}~m\to \infty.
	\end{align*}
	and therefore we conclude that
	\begin{equation}\label{3.10d}
		\int_{0}^{t}(t-s)^{\alpha-1}P_n\mathcal{T}_{\alpha}(t-s)\mathbb{B}u_m(s)ds\to \int_{0}^{t}(t-s)^{\alpha-1}P_n\mathcal{T}_{\alpha}(t-s)\mathbb{B}u(s)ds~\text{as}~m\to \infty,
	\end{equation}
	uniformly in $t\in I$. Combining \eqref{3.8c} and \eqref{3.10d} together and passing limit in \eqref{3.4e} we obtain
	\begin{equation*}
		y_m(t)\to y(t)=P_n\mathcal{S}_{\alpha}(t)(x_0+w_n)+\int_{0}^{t}(t-s)^{\alpha-1}P_n\mathcal{T}_{\alpha}(t-s)P_nf(s)ds
	\end{equation*}
    \begin{equation*}
        +\int_{0}^{t}(t-s)^{\alpha-1}P_n\mathcal{T}_{\alpha}(t-s)\mathbb{B}u(s)ds ~ \text{as} ~ m\rightarrow \infty
    \end{equation*}
	uniformly in $t\in I$, where
	\begin{equation*}
		u=-\mathcal{W}^{-1}\mathcal{Z}_n(w_n,f),
	\end{equation*}
	where
	\begin{equation}\label{3.6}
		\mathcal{Z}_n(w_n,f)=\mathcal{S}_{\alpha}(\nu)P_n(x_0+w_n)+\int_{0}^{\nu}(\nu-s)^{\alpha-1}\mathcal{T}_{\alpha}(\nu-s)P_nf(s)ds.
	\end{equation}
	This leads to the conclusion that the map $\Upsilon_n$ is continuous.\\

	\textbf{\textit{Step-(iii):}} We define the map $S_{F}:\Upsilon_n(Q^{N_0}_{F})\multimap L^2(I, X)$ by
	\begin{equation}
		S_{F}(q)=\{f\in L^2(I, X): f(t)\in F(t,q(t))~\text{a.a.}~t\in I\}, q\in \Upsilon_n(Q^{N_0}_{F}).
	\end{equation}
	By Theorem (\ref{Thm2.29}) and Theorem \eqref{SFT}, it follows that the multimap $S_{F}$ has nonempty closed, convex values and has a weakly closed graph. \\

	\textbf{\textit{Step-(iv):}} We now define the multimap $\Gamma_n: Q^{N_0}_{F}\multimap L^2(I, X)$ by 
	\begin{equation}
		\Gamma_n(f)=S_{F}(\Upsilon_n(f)), f\in Q^{N_0}_{F}.
	\end{equation}   
	It is evident that the multimap $\Gamma_n$ is clearly well defined, with weakly compact and convex values. Moreover, the map $\Gamma_n$ maps $Q^{N_0}_{F}$ into itself. Infact if $\xi\in \Gamma_n(f), f\in Q^{N_0}_{F}$, then $\xi\in S_{F}(\Upsilon_n(f))$. Let $q=\Upsilon_n(f)$, then $\norm{q(t)}\le N_0, t\in I$. Therefore, by Hypothesis (\textbf{F2}), $\norm{\xi(t)}\le \eta_{N_0}(t)$ a.a. $t\in I$.\\
	
	\textbf{\textit{Step-(v):}} It still needs to be demonstrated that the multimap $\Gamma_n$ is weakly weakly upper semicontinuous. We begin by proving that the multimap $\Gamma_n$ possesses a weakly closed graph. For this consider $\{f_m\}_{m\in \mathbb{N}}\subset Q^{N_0}_{F}$, $\{\mu_m\}_{m\in \mathbb{N}}\subset Q^{N_0}_{F}$ be such that $f_m\in \Gamma_n(\mu_m)$ for all $m\in \mathbb{N}$ and $f_m\rightharpoonup f$, $\mu_m\rightharpoonup \mu$ in $Q^{N_0}_{F}$. We prove $f\in \Gamma_n(\mu)$. The fact that $f_m\in \Gamma_n(\mu_m)$ we obtain
	\begin{equation}
		f_m\in S_{F}(\Upsilon_n(\mu_m)).
	\end{equation}  
	Since $\Upsilon_n$ map is continuous, it implies that $\Upsilon_n(\mu_m)\to \Upsilon_n(\mu)$ as $m\to \infty$. According to the weakly closed graph of the map $S_{F}$ we obtain $f\in S_{F}(\Upsilon_n(\mu))$. Therefore, $f\in \Gamma_n(\mu)$ and this concludes the proof.\\
	In the following, we establish that the map $\Gamma_n$ is weakly weakly upper semicontinuous. Assume $C$ is a weakly closed set in $Q_F^{N_0}$. We prove that the set $\{f\in Q_F^{N_0}: \Gamma_n(f)\cap C\neq \phi\}$ is weakly closed set in $Q_F^{N_0}$. Let $f_m\in Q_F^{N_0}$ be such that $\Gamma_n(f_m)\cap C \neq \phi$ for all $m\in \mathbb{N}$ and $f_m\rightharpoonup f$ in $L^2(I,X)$. Let $h_m\in \Gamma_n(f_m)\cap C$ for $m\in \mathbb{N}$. Then $h_m\in Q_F^{N_0}$ for all $m\in \mathbb{N}$ and hence $h_m\rightharpoonup h$ in $L^2(I,X)$. Since $\Gamma_n$ possesses a closed graph and $C$ is weakly closed leads to the conclusion that $h\in \Gamma_n(f)\cap C$. Hence, $\Gamma_n(f)\cap C\neq \phi$ and the map $\Gamma_n$ is weakly weakly upper semicontinuous.  \\

	\textbf{\textit{Step-(vi):}} Consequently, from Theorem  \ref{fixed}, there exists a fixed point of the map $\Gamma_n$ for each $n\in \mathbb{N}$. The definition of the map $\Gamma_n$ gives $f_n\in S_{F}(\Upsilon_n(f_n)), n\in \mathbb{N}$. Set $q_n=\Upsilon_n(f_n)$, then we have $f_n\in S_{F}(q_n), n\in \mathbb{N}$. The definition of the map $\Upsilon_n$ gives
	\begin{equation*}\label{422}
		q_n(t)=P_n\mathcal{S}_{\alpha}(t)(x_0+w_n)+\int_{0}^{t}(t-s)^{\alpha-1}P_n\mathcal{T}_{\alpha}(t-s)P_nf_n(s)ds
	\end{equation*} 
    \begin{equation}
        +\int_{0}^{t}(t-s)^{\alpha-1}P_n\mathcal{T}_{\alpha}(t-s)\mathbb{B}u_n(s)ds, t\in I,
    \end{equation}
	where $f_n\in S_{F}(q_n)$ and $u_n\in L^p(I,U)$ is given by
	\begin{equation}
		u_n=-\mathcal{W}^{-1}\mathcal{Z}_n(w_n,f_n),
	\end{equation}
	where 
	\begin{equation}
		\mathcal{Z}_n(w_n,f_n)=\mathcal{S}_{\alpha}(\nu)P_n(x_0+w_n)+\int_{0}^{\nu}(\nu-s)^{\alpha-1}\mathcal{T}_{\alpha}(\nu-s)P_nf_n(s)ds.
	\end{equation}
	We show that there is a subsequence $\{q_{n_k}\}_{k\in \mathbb{N}}$ of $\{q_n\}_{n\in \mathbb{N}}$ such that, $q_{n_k}$ converges weakly to $q\in C(I,X)$ where 
	\begin{align*}
	    q(t)=\mathcal{S}_{\alpha}(t)(x_0+w)+&\int_{0}^{t}(t-s)^{\alpha-1}\mathcal{T}_{\alpha}(t-s)f(s)ds\\
        &+\int_{0}^{t}(t-s)^{\alpha-1} \mathcal{T}_{\alpha}(t-s)\mathbb{B}u(s)ds, ~\text{for every}~ t\in I,
	\end{align*}

	where $w\in g(q), f\in S_F(q)$ and $q(\nu)=0$, for a control $u\in L^p(I,U)$.\\
	Next, we take $w_n\in g(q_n)$.
	From the hypothesis (\textbf{g1}), we deduce that $\{w_n\}_{n\in \mathbb{N}}\subset X$ is bounded in $X$ and since $X$ is reflexive, so there exists $w\in X$ for which $w_n$ weakly converges to $w$ in $X$. According to the reflexivity of $X$ and \eqref{2.4} we have
	\begin{equation}\label{3.14}
		P_n\mathcal{S}_{\alpha}(t)(x_0+w_n)\rightharpoonup \mathcal{S}_{\alpha}(t)(x_0+w)~\text{in}~ X.
	\end{equation} 
	According to the Hypothesis (\textbf{F2}), the sequence $\{f_n\}_{n\in \mathbb{N}}\subset L^2(I,X)$ is integrably bounded and the set $\{f_n(t)\}_{n\in \mathbb{N}}$ is weakly relatively compact in $X$ for almost all $t\in I$. From the result of Corollary \ref{BA} we deduce that $\{f_n\}_{n\in \mathbb{N}}$ is weakly relatively compact in $L^2(I,X)$. Therefore, there exists $f\in L^2(I,X)$ such that $f_n\rightharpoonup f$ in $L^2(I,X)$. In view of Lemma \eqref{Lem6.3} (can be found in Appendix section) we obtain $\mathbb{P}_nf_n\rightharpoonup f$ in $L^2(I,X)$, where $\mathbb{P}_n$ is given by \eqref{Projection}. Again by \eqref{2.4} 
	\begin{equation}\label{3.15}
		\int_{0}^{t}(t-s)^{\alpha-1}P_n\mathcal{T}_{\alpha}(t-s)P_nf_n(s)ds\rightharpoonup \int_{0}^{t}(t-s)^{\alpha-1}\mathcal{T}_{\alpha}(t-s)f(s)ds.
	\end{equation}
	Considering that $\{u_n\}_{n \in \mathbb{N}} \subset L^p(I,U)$ is bounded, the reflexivity of $L^p(I,U)$ implies that $u_n \rightharpoonup u$ in $L^p(I,U)$. Therefore, we have
	\begin{equation}\label{3.16}
		\int_{0}^{t}(t-s)^{\alpha-1}P_n\mathcal{T}_{\alpha}(t-s)\mathbb{B}u_n(s)ds\rightharpoonup \int_{0}^{t}(t-s)^{\alpha-1}\mathcal{T}_{\alpha}(t-s)\mathbb{B}u(s)ds.
	\end{equation}
	Combining equations \eqref{3.14}, \eqref{3.15} and \eqref{3.16} together we obtain from \eqref{422}
	\begin{equation*}
		q_n(t)\rightharpoonup q(t)=\mathcal{S}_{\alpha}(t)(x_0+w)+\int_{0}^{t}(t-s)^{\alpha-1}\mathcal{T}_{\alpha}(t-s)f(s)ds
	\end{equation*}
    \begin{equation}
        +\int_{0}^{t}(t-s)^{\alpha-1}\mathcal{T}_{\alpha}(t-s)\mathbb{B}u(s)ds, t\in I.
    \end{equation}
	The fact that $q_n(t)\rightharpoonup q(t)$ in $X$ for every $t\in I$ and $\norm{q_n}_{C(I,X)}\le N_0$, we conclude that $q_n\rightharpoonup q$ in $C(I,X)$.
	As $w_n \rightharpoonup w$ in $X$, and $w_n \in g(q_n)$, we can use hypothesis (\textbf{g2}) to conclude that $w \in g(q)$. Moreover, since $f_n$ converges weakly to $f$ in $L^2(I,X)$, $q_n(t)$ converges weakly to $q(t)$ in $X$, and $f_n \in S_F(q_n)$, we can apply Theorem \ref{SFT} to establish that $f \in S_F(q)$. Thus, $q$ is a mild solution to the problem \eqref{1.1}. \\
	The proof is finalized by proving that $q(\nu)=0$. For every $n\in \mathbb{N}$ we have
	\begin{align*}
		q_n(\nu)=&P_n\mathcal{S}_{\alpha}(\nu)(x_0+w_n)+\int_{0}^{\nu}(\nu-s)^{\alpha-1}P_n\mathcal{T}_{\alpha}(\nu-s)P_nf_n(s)ds-P_n\mathcal{W}\mathcal{W}^{-1}\mathcal{Z}_n(w_n,f_n)\\
		=&P_n\mathcal{S}_{\alpha}(\nu)(x_0+w_n)+\int_{0}^{\nu}(\nu-s)^{\alpha-1}P_n\mathcal{T}_{\alpha}(\nu-s)P_nf_n(s)ds-P_n\mathcal{Z}_n(w_n,f_n)\\
		=&P_n\mathcal{S}_{\alpha}(\nu)(x_0+w_n)+\int_{0}^{\nu}(\nu-s)^{\alpha-1}P_n\mathcal{T}_{\alpha}(\nu-s)P_nf_n(s)ds\\
		&-P_n\left[\mathcal{S}_{\alpha}(\nu)P_n(x_0+w_n)+\int_{0}^{\nu}(\nu-s)^{\alpha-1}\mathcal{T}_{\alpha}(\nu-s)P_nf_n(s)ds\right]\\
		=&P_n\mathcal{S}_{\alpha}(\nu)(x_0+w_n)-P_n\mathcal{S}_{\alpha}(\nu)P_n(x_0+w_n).
	\end{align*}
	Therefore, we have 
	\begin{equation*}
		q_n(\nu)\rightharpoonup 0.
	\end{equation*}
	Consequently, we arrive at $q(\nu)=0$, thus proving the result.
\end{proof}	

\begin{remark}
	Consider the linear fractional-order control system 
	\begin{equation}\label{3.22}
		\begin{cases}
			^{C}D^{\alpha}_{t}q(t)=\mathbb{A}q(t)+\mathbb{B}u(t), t\in I;\\
			q(0)=x_0
		\end{cases}
	\end{equation}
	is exactly controllable. It follows that the map $\mathcal{W}$ specified in \eqref{ControlW} is surjective. Thus, the control function expressed as
	\begin{equation}
		u=\mathcal{W}^{-1}\left[x_1-\mathcal{S}_{\alpha}(\nu)x_0\right]
	\end{equation}
	steers the system \eqref{3.22} from any initial state $x_0$ to any final state $x_1$ in $X$. 
\end{remark}
This allows us to prove the exact controllability results as well.
\begin{corollary}
	Assume that Hypotheses (\textbf{T}) hold and the linear fractional-order control system \eqref{3.22} is exactly controllable. Also, assume that the multivalued nonlinearity $F$ satisfies Hypotheses (\textbf{F1})-(\textbf{F4}); and the nonlocal map $g$ satisfies Hypotheses (\textbf{g1}) and (\textbf{g2}). Then, the system \eqref{1.1} is exactly controllable in $I$.
\end{corollary}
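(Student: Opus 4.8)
The plan is to run the very same approximate-solvability scheme used in the proof of Theorem \ref{MR}, changing only the target of the steering control from the origin to the prescribed state $x_1$. The crucial structural input is the remark preceding the statement: exact controllability of the linear system \eqref{3.22} forces the operator $\mathcal{W}$ of \eqref{ControlW} to be surjective, so that the right inverse $\mathcal{W}^{-1}=\Pi\circ\tilde{\mathcal{W}}^{-1}:X\to L^p(I,U)$ is defined on all of $X$, is continuous (though in general nonlinear, since $U$ is only uniformly convex), and satisfies $\mathcal{W}\circ\mathcal{W}^{-1}=\mathrm{Id}_X$. This surjectivity replaces Hypothesis (\textbf{U}) and removes any need to check an inclusion of the form $Im(\mathcal{Z})\subset Im(\mathcal{W})$, because now every element of $X$ lies in $Im(\mathcal{W})$, so the control below is automatically well defined.

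First I would fix $w_n\in g(B_C^N)$ and, for $f\in Q_F^{N}$, define $\Upsilon_n f$ exactly as in \eqref{A1} but with the control
\[
    u=\mathcal{W}^{-1}\bigl[x_1-\mathcal{Z}_n(w_n,f)\bigr],
\]
where $\mathcal{Z}_n(w_n,f)$ is given by \eqref{A3}, and then set $\Gamma_n(f)=S_F(\Upsilon_n(f))$. The a priori estimate of \textbf{\textit{Step-(i)}} is unchanged except that the bound on $\norm{\mathcal{W}^{-1}[x_1-\mathcal{Z}_n(w_n,f)]}_{L^p(I,U)}$ now carries an extra fixed summand $\norm{\tilde{\mathcal{W}}^{-1}}\norm{x_1}$; since $x_1$ is fixed, this merely enlarges $D_1$ by a constant term $\tfrac{M}{\Gamma(\alpha)}\norm{\mathbb{B}}\kappa_2\norm{\tilde{\mathcal{W}}^{-1}}\norm{x_1}$, which is annihilated in the $\liminf_{N\to\infty}\tfrac1N(\cdots)$ argument, so the self-map properties $\Upsilon_n(Q_F^{N_0})\subset B_C^{N_0}$ and $\Gamma_n(Q_F^{N_0})\subset Q_F^{N_0}$ persist. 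The continuity of $\Upsilon_n$ from $w\text{-}Q_F^{N_0}$ into $C(I,X)$, the semicompactness arguments, the weakly closed graph of $\Gamma_n$, and its weakly weakly upper semicontinuity (\textbf{\textit{Steps (ii)--(v)}}) all go through verbatim, since continuity of $\mathcal{W}^{-1}$ and the additive constant $x_1$ do not interfere with any of those convergence arguments. Theorem \ref{fixed} then supplies a fixed point $f_n$ of $\Gamma_n$, and $q_n:=\Upsilon_n(f_n)$ satisfies $f_n\in S_F(q_n)$.

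The passage to the limit mirrors \textbf{\textit{Step-(vi)}}: extracting weakly convergent subsequences $w_n\rightharpoonup w$ in $X$, $f_n\rightharpoonup f$ and $\mathbb{P}_nf_n\rightharpoonup f$ in $L^2(I,X)$, and $u_n\rightharpoonup u$ in $L^p(I,U)$, and invoking \eqref{2.4}, Hypothesis (\textbf{g2}) and Theorem \ref{SFT}, one obtains $q_n\rightharpoonup q$ in $C(I,X)$ with $w\in g(q)$ and $f\in S_F(q)$, so that $q$ is a mild solution of \eqref{1.1}.

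The only genuinely new computation, and the step I would treat most carefully, is the terminal condition $q(\nu)=x_1$. From $\mathcal{W}\mathcal{W}^{-1}=\mathrm{Id}_X$ we have $\mathcal{W}(u_n)=x_1-\mathcal{Z}_n(w_n,f_n)$, and since $P_n$ commutes with the Bochner integral, the integral term in $q_n(\nu)$ coincides with the integral part of $P_n\mathcal{Z}_n(w_n,f_n)$ and cancels, leaving
\[
    q_n(\nu)=P_n\mathcal{S}_{\alpha}(\nu)(x_0+w_n)-P_n\mathcal{S}_{\alpha}(\nu)P_n(x_0+w_n)+P_nx_1.
\]
By \eqref{2.4} both $P_n\mathcal{S}_{\alpha}(\nu)(x_0+w_n)$ and $P_n\mathcal{S}_{\alpha}(\nu)P_n(x_0+w_n)$ converge weakly to the same limit $\mathcal{S}_{\alpha}(\nu)(x_0+w)$ and cancel, while $P_nx_1\to x_1$ strongly by Proposition \ref{Prop1}. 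Hence $q_n(\nu)\rightharpoonup x_1$, which forces $q(\nu)=x_1$. As $x_1\in X$ was arbitrary, system \eqref{1.1} is exactly controllable on $I$. I do not anticipate a serious obstacle here, precisely because the whole machinery of Theorem \ref{MR} transfers; the substantive point is simply recognizing that surjectivity of $\mathcal{W}$ both legitimizes the $x_1$-targeting control and produces the clean cancellation above.
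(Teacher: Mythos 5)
Your proposal is correct and follows essentially the same route as the paper: the paper's own proof defines the analogous fixed-point scheme $\Gamma_n(f)=S_F(\Upsilon_n(f))$ with the steering control replaced by $u=\mathcal{W}^{-1}P_n\mathcal{Z}(w_n,f)$, $\mathcal{Z}(w_n,f)=x_1-\mathcal{S}_{\alpha}(\nu)(x_0+w_n)-\int_0^{\nu}(\nu-s)^{\alpha-1}\mathcal{T}_{\alpha}(\nu-s)f(s)\,ds$, and then simply invokes ``a similar proof approach,'' exactly as you do (your placement of $P_n$ inside $\mathcal{Z}_n$ rather than outside is an immaterial variant, since both yield the same cancellation at $t=\nu$). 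In fact you go further than the paper by explicitly carrying out the a priori bound with the extra $\norm{\tilde{\mathcal{W}}^{-1}}\norm{x_1}$ term and the terminal computation $q_n(\nu)=P_n\mathcal{S}_{\alpha}(\nu)(x_0+w_n)-P_n\mathcal{S}_{\alpha}(\nu)P_n(x_0+w_n)+P_nx_1\rightharpoonup x_1$, which the paper leaves implicit.
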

\begin{proof}
At time $\nu$, the objective is to reach the state $x_1\in X$. Consider the multi-operator $\Gamma_n: L^2(I, X)\multimap L^2(I;X)$ which is defined by
	\begin{equation}
		\Gamma_n(f)=S_F(\Upsilon_n(f)), f\in L^2(I,X),
	\end{equation}
	where for each $f\in L^2(I;X)$, $\Upsilon_n(f)\in C(I,X_n)$ satisfies 
	\begin{equation*}
		(\Upsilon_nf)(t)=P_n\mathcal{S}_{\alpha}(t)(x_0+w_n)+\int_{0}^{t}(t-s)^{\alpha-1}P_n\mathcal{T}_{\alpha}(t-s)P_nf(s)ds
	\end{equation*}
    \begin{equation}
        +\int_{0}^{t}(t-s)^{\alpha-1}P_n \mathcal{T}_{\alpha}(t-s)\mathbb{B}u(s)ds, t\in I,
    \end{equation}
	where $w_n\in g(B_C^N)$ be fixed, $f\in S_F(q)$ and $u\in L^p(I,U)$ is given by
	\begin{equation}
		u=\mathcal{W}^{-1}P_n\mathcal{Z}(w_n,f),
	\end{equation}
	where
	\begin{equation}
		\mathcal{Z}(w_n,f)=\left[x_1-\mathcal{S}_{\alpha}(\nu)(x_0+w_n)-\int_{0}^{\nu}(\nu-s)^{\alpha-1}\mathcal{T}_{\alpha}(\nu-s)f(s)ds\right].
	\end{equation}
	Applying a similar proof approach, the exact controllability results are obtained.
\end{proof}
\section{Application}
Let $\Omega = [0,\pi]$ and $I = [0,\nu]$, $0<\nu<\infty$. Consider the following Diffusion fractional control system
\begin{equation}\label{M1}
	\begin{cases}
		^{C}D^{\alpha}_tz(t,\tau)= -a(\tau)z(t,\tau)+\displaystyle b(t,\tau)\Xi\left(\tau,\int_{\Omega}\Theta(\tau)z(t,\tau)d\tau\right)+\mathbb{B}v(t,\tau), ~ (t,\tau)\in I\times \Omega\\
		z(0,\tau)=x_0(\tau), t\in I, \tau\in \Omega.
	\end{cases}
\end{equation}
Here, $a(\cdot)\in C([0,\pi], \mathbb{R}), ~ x_{0}\in L^{p}([0,\pi],\mathbb{R}), ~ 1<p\le 2 $ and $\alpha\in(\frac{1}{p},1)$. We define the operator $A : X \rightarrow X$ as 
\begin{equation}
    Ax(\tau)= -a(\tau)x(\tau), ~\tau\in [0,\pi], ~ x\in X.
\end{equation}
It is clear the map $A$ is a bounded linear operator in $X$. Hence, $A$ generates a uniformly continuous semigroup $\{T(t)\}_{t\geq 0}$ on $X$ given by 
\begin{equation}
    (T(t)x)(\tau)= e^{-ta(\tau)}x(\tau), ~ t\geq 0, ~ x\in X
\end{equation}

The function $b(t,\tau):I\times \Omega\to \mathbb{R}$ is such that
\begin{itemize}
	\item[(b1)] $t\mapsto b(t,\tau)$ is measurable for all $\tau \in\Omega$.
	\item[(b2)] $\tau\mapsto b(t,\tau)$ is continuous for a.e. $t\in I$.
	\item[(b3)] There exists $m>0$ such that 
	\begin{equation}
		\abs{b(t,\tau)}\le m~\text{a.e.}~t\in I, \tau\in \Omega.
	\end{equation}
\end{itemize}

Also, $\Theta:\Omega\to \mathbb{R}$ is a function such that $\Theta\in L^{p^{\prime}}(\Omega)$, $\frac{1}{p}+\frac{1}{p^{\prime}}=1$, and $\Xi(\cdot,\theta)\in L^p(\Omega)$ for every $\theta\in \mathbb{R}$ and satisfying
\begin{equation}\label{W2}
	\psi_1(\tau,\theta)\le \Xi(\tau,\theta)\le \psi_2(\tau,\theta)~\text{for every}~\tau\in \Omega, ~\text{for every}~\theta\in \mathbb{R},
\end{equation}
with $\psi_1, \psi_2:\Omega\times \mathbb{R}\to \mathbb{R}$ are functions such that for $i=1,2$,
\begin{equation}\label{W3}
	\psi_i(\cdot,\theta)\in L^p(\Omega)~\text{for every}~\theta\in \mathbb{R};
\end{equation}
moreover, there exists $\alpha\in L^p(\Omega,\mathbb{R}^+)$ such that
\begin{equation}\label{W5}
	\abs{\psi_i(\tau,\theta)}\le \alpha(\tau), \tau\in \Omega, \theta\in \mathbb{R};
\end{equation} 
further
\begin{equation}\label{W7}
	\psi_1(\tau,\theta_0)\le \liminf_{\theta\to \theta_0}\psi_1(\tau,\theta)~\text{and}~	\limsup_{\theta
		\to \theta_0}\psi_2(\tau,\theta)\le\psi_2(\tau,\theta_0) ,
\end{equation}
for every $\tau\in \Omega, \theta_0\in \mathbb{R}$.
In the above, we denote by $L^p(\Omega)$ the Banach space of real-valued Lebesgue integrable functions defined on $\mathbb{R}$. Concrete examples of functions that satisfy \eqref{W2}-\eqref{W7} can be found in \cite{malaguti2016nonsmooth}.


We now approach by rewriting the fractional control problem \eqref{M1} as an abstract problem driven by a fractional differential inclusion in the space $X=L^p(\Omega),\\
1<p\le 2$. To this aim, we put:
\begin{equation*}
	u(t)(\tau)=v(t,\tau) ~\text{and}~q(t)(\tau)=z(t,\tau), t\in I, \tau\in \Omega.
\end{equation*}
Clearly, $q:I\to L^p(\Omega)$.\\

We define $F:I\times L^p(\Omega)\multimap L^p(\Omega)$ as
\begin{equation*}
	F(t,\phi)(\tau)=b(t,\tau)\psi(\tau), (t,\phi)\in I\times L^p(\Omega),\psi\in \Upsilon(\phi),
\end{equation*}
where the multivalued map $\Upsilon: L^p(\Omega)\multimap L^p(\Omega)$ is defined as
\begin{align*}
	\Upsilon(\phi)=&\left\{\psi\in L^p(\Omega): 
	\psi_1\left(\tau,\int_{\Omega}\Theta(\omega)\phi(\omega)d\omega \right)\le \psi(\tau)\le \psi_2\left(\tau,\int_{\Omega}\Theta(\omega)\phi(\omega)d\omega\right)
	~\text{a.a.}~\tau\in \Omega\right\}.
\end{align*}
Considering the conditions \eqref{W2}-\eqref{W7}, we affirm that the multimap $\Upsilon$ is well defined.\\
Take $U=L^p(\Omega)$ and $\mathbb{B}: U\to X $ to be the identity operator.

Therefore, considering the nonlocal function $g=0$, the abstract reformulation of equation \eqref{M1} can be given as the following semilinear evolution fractional inclusion in the Banach space $X=L^p(\Omega)$:
\begin{equation*}
	\begin{cases}
		^{C}D^{\alpha}_tq(t)\in \mathbb{A}q(t)+F(t,q(t))+\mathbb{B}u(t), t\in I\\
		q(0)\in x_0+g(q).
	\end{cases}
\end{equation*}
Of course, the solutions to these equations give rise to solutions for \eqref{M1}.\\
We now verify that the hypotheses of Theorem \ref{MR} are satisfied.

\begin{proposition}\label{U}
	We now show that the fractional system
	\begin{equation}\label{C1}
		\begin{cases}
			^{C}D^{\alpha}_tq(t)= \mathbb{A}q(t)+f(t)+\mathbb{B}u(t), t\in I\\
			q(0)=x_0,
		\end{cases}
	\end{equation}
	is  null controllable in time $\nu$, for any $f\in L^2(I,X)$.
	
\end{proposition}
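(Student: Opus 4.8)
The plan is to exploit the very special structure of this example. Since $\mathbb{B}$ is the identity on $U=X=L^p(\Omega)$ and the semigroup acts by multiplication, $(T(t)x)(\sigma)=e^{-t a(\sigma)}x(\sigma)$ (writing $\sigma\in\Omega$ for the spatial variable), I would prove the claim by showing directly that the control operator $\mathcal{W}$ of \eqref{ControlW} is \emph{surjective}. Once $\mathcal{W}$ is onto, $Im(\mathcal{Z})\subset X = Im(\mathcal{W})$ holds trivially for every source $f\in L^2(I,X)$, so by Definition \ref{Defc} the system \eqref{C1} is partially null controllable, which is exactly the assertion to be proved.

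First I would record that the subordination formulas \eqref{Talpha} turn $\mathcal{S}_{\alpha}(t)$ and $\mathcal{T}_{\alpha}(t)$ into multiplication operators as well. Using the identities $\int_0^\infty \xi_\alpha(\theta)e^{-z\theta}\,d\theta = E_\alpha(-z)$ and $\alpha\int_0^\infty \theta\,\xi_\alpha(\theta)e^{-z\theta}\,d\theta = E_{\alpha,\alpha}(-z)$, where $E_{\alpha,\beta}$ is the Mittag--Leffler function, one obtains
\begin{equation*}
(\mathcal{S}_{\alpha}(t)x)(\sigma)=E_\alpha(-a(\sigma)t^\alpha)x(\sigma),\qquad (\mathcal{T}_{\alpha}(t)x)(\sigma)=E_{\alpha,\alpha}(-a(\sigma)t^\alpha)x(\sigma).
\end{equation*}
Hence for a control that is constant in time, $u(s)(\sigma)=g(\sigma)$, the substitution $r=\nu-s$ together with the elementary identity $\frac{d}{dr}E_\alpha(\lambda r^\alpha)=\lambda r^{\alpha-1}E_{\alpha,\alpha}(\lambda r^\alpha)$ yields
\begin{equation*}
\mathcal{W}(u)(\sigma)=g(\sigma)\int_0^\nu r^{\alpha-1}E_{\alpha,\alpha}(-a(\sigma)r^\alpha)\,dr = g(\sigma)K(\sigma),\quad K(\sigma):=\frac{1-E_\alpha(-a(\sigma)\nu^\alpha)}{a(\sigma)}.
\end{equation*}

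Next I would check that $K$ is continuous, strictly positive, and bounded above and below on the compact interval $[0,\pi]$. Continuity (including at the zeros of $a$) follows because $x\mapsto (1-E_\alpha(-x\nu^\alpha))/x$ extends continuously to $x=0$ with value $\nu^\alpha/\Gamma(\alpha+1)$. Since $0<\alpha<1$, the function $E_\alpha(-x)$ is completely monotone and strictly decreasing from $E_\alpha(0)=1$, which gives $K(\sigma)>0$ in each of the three cases $a(\sigma)>0$, $a(\sigma)<0$ and $a(\sigma)=0$. Compactness then furnishes constants $0<K_{\min}\le K(\sigma)\le K_{\max}$, so $1/K\in L^\infty(\Omega)$. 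Finally, given any target $y\in X$, the time-independent control $u(s):=y/K$ satisfies $u\in L^p(I,U)$ with $\norm{u}^p_{L^p(I,U)}=\nu\,\norm{y/K}^p_{L^p(\Omega)}<\infty$ and $\mathcal{W}(u)=y$; hence $\mathcal{W}$ is surjective, and by the discussion above the system is partially null controllable in time $\nu$ for every $f\in L^2(I,X)$.

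The main obstacle is the uniform positive lower bound on $K(\sigma)$: it rests on the positivity (complete monotonicity) of the Mittag--Leffler function $E_{\alpha,\alpha}(-x)$ for $x\ge 0$, equivalently on the strict monotonicity of $E_\alpha(-x)$ when $0<\alpha<1$, together with the correct treatment of the removable singularity at the zeros of $a$. Once these analytic facts are in hand, continuity on the compact set $[0,\pi]$ delivers the uniform bound and the construction closes.
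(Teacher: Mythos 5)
Your proof is correct, but it takes a genuinely different route from the paper. The paper argues by duality: invoking Proposition \ref{MProp}, it reduces partial null controllability of \eqref{C1} to a two-term observability inequality, imports from \cite{chalishajar2024null} the single-term inequality $\norm{(\nu-\cdot)^{\alpha-1}\mathcal{T}^*_{\alpha}(\nu-\cdot)\phi^*}_{L^{p'}(I,U^*)}\ge \delta\norm{\mathcal{S}^*_{\alpha}(\nu)\phi^*}$ for the homogeneous system (using $\mathbb{B}^*=I$), and then upgrades it to the two-term inequality with $\gamma=\delta/(1+\delta)$ via the comparison between the $L^{p'}$ and $L^2$ norms, which is legitimate here because $1<p\le 2$ forces $p'\ge 2$ (a $\nu$-dependent constant is absorbed into $\gamma$). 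You instead exploit the commutative multiplication structure of the example: by subordination, $\mathcal{S}_{\alpha}(t)$ and $\mathcal{T}_{\alpha}(t)$ act as multiplication by $E_{\alpha}(-a(\sigma)t^{\alpha})$ and $E_{\alpha,\alpha}(-a(\sigma)t^{\alpha})$, a time-constant control integrates exactly — your antiderivative identity $\frac{d}{dr}E_{\alpha}(\lambda r^{\alpha})=\lambda r^{\alpha-1}E_{\alpha,\alpha}(\lambda r^{\alpha})$ is correct — to multiplication by $K(\sigma)=\bigl(1-E_{\alpha}(-a(\sigma)\nu^{\alpha})\bigr)/a(\sigma)$, and your three sign cases for $K>0$ are each right ($0<E_{\alpha}(-x)<1$ for $x>0$ by complete monotonicity, $E_{\alpha}(x)>1$ for $x>0$ from the series, and the removable singularity at zeros of $a$ has the correct value $\nu^{\alpha}/\Gamma(\alpha+1)$); continuity on the compact set $[0,\pi]$ then gives $1/K\in L^{\infty}(\Omega)$, so $\mathcal{W}$ is onto and $Im(\mathcal{Z})\subset X=Im(\mathcal{W})$ holds trivially. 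One small point you should make explicit: for $\sigma$ with $a(\sigma)<0$ you are applying the Laplace-type identities at a negative argument $z=a(\sigma)t^{\alpha}<0$, where they are usually quoted only for $z\ge 0$; this is harmless because $\xi_{\alpha}$ decays super-exponentially, so its moment generating function is entire and $\int_{0}^{\infty}\xi_{\alpha}(\theta)e^{s\theta}d\theta=E_{\alpha}(s)$ for all real $s$, but it deserves a sentence. As for what each approach buys: the paper's argument is shorter and transfers to any control operator $\mathbb{B}$ and any generator for which the homogeneous observability inequality is available, at the cost of leaning on an external citation; yours is self-contained and constructive, produces explicit time-independent controls, and in fact proves the strictly stronger statement that $\mathcal{W}$ is surjective — i.e., exact controllability of the linear system, which also verifies the hypothesis of the paper's exact-controllability corollary for this example — but it is tied to the multiplication structure of this particular model.
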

\begin{proof}
	By virtue of Proposition \eqref{MProp}, the null controllability of the system \eqref{C1} is equivalent to the existence of a $\gamma>0$ such that
	\begin{equation}
		\norm{(\nu-\cdot)^{\alpha-1}\mathbb{B}^*\mathcal{T}_{\alpha}^*(\nu-\cdot)\phi^*}_{L^{p^{\prime}}(I,U^*)}\ge \gamma\left[\norm{\mathcal{S}_{\alpha}^*(\nu)\phi^*}_{X^*}+\norm{(\nu-\cdot)^{\alpha-1}\mathcal{T}_{\alpha}^*(\nu-\cdot)\phi^*}_{L^2(I,X^*)}\right],
	\end{equation} 
    $\forall \phi^*\in X^*;$ or equivalently,
	\begin{equation}
		\norm{(\nu-\cdot)^{\alpha-1}\mathcal{T}_{\alpha}^*(\nu-\cdot)\phi^*}_{L^{p^{\prime}}(I,U^*)}\ge \gamma\left[\norm{\mathcal{S}_{\alpha}^*(\nu)\phi^*}_{X^*}+\norm{(\nu-\cdot)^{\alpha-1}\mathcal{T}_{\alpha}^*(\nu-\cdot)\phi^*}_{L^2(I,X^*)}\right], 
	\end{equation} 
    $\forall \phi^*\in X^*$.\\
	The authors in \cite{chalishajar2024null} proved that the fractional linear system \eqref{C1} with $f=0$ is null controllable in time $\nu$, if there is $\delta >0$ such that
	\begin{equation*}
		\norm{(\nu-\cdot)^{\alpha-1}\mathbb{B}^*\mathcal{T}_{\alpha}^*(\nu-\cdot)\phi^*}_{L^{p^{\prime}}(I,U^*)}\ge \delta\norm{\mathcal{S}_{\alpha}^*(\nu)\phi^*}, \forall \phi^*\in X^*
	\end{equation*}
    or
    \begin{equation*}
        	\norm{(\nu-\cdot)^{\alpha-1}\mathcal{T}_{\alpha}^*(\nu-\cdot)\phi^*}_{L^{p^{\prime}}(I,U^*)}\ge \delta\norm{\mathcal{S}_{\alpha}^*(\nu)\phi^*}, \forall \phi^*\in X^*
    \end{equation*}
	We can rewrite the above expression as
	\begin{equation*}
		\frac{1}{1+\delta}	\norm{(\nu-\cdot)^{\alpha-1}\mathcal{T}_{\alpha}^*(\nu-\cdot)\phi^*}_{L^{p^{\prime}}(I,U^*)}\ge \frac{\delta}{1+\delta}\norm{\mathcal{S}_{\alpha}^*(\nu)\phi^*}, 
	\end{equation*}
	From this, we conclude that
	\begin{equation*}
		\norm{(\nu-\cdot)^{\alpha-1}\mathcal{T}_{\alpha}^*(\nu-\cdot)\phi^*}_{L^{p^{\prime}}(I,U^*)}\ge \frac{\delta}{1+\delta}\left[\norm{\mathcal{S}_{\alpha}^*(\nu)\phi^*}+\norm{(\nu-\cdot)^{\alpha-1}\mathcal{T}_{\alpha}^*(\nu-\cdot)\phi^*}_{L^{p^{\prime}}(I,X^*)}\right].
	\end{equation*}
	Since $\norm{(\nu-\cdot)^{\alpha-1}\mathcal{T}_{\alpha}^*(\nu-\cdot)\phi^*}_{L^{p^{\prime}}(I,X^*)}\ge \norm{(\nu-\cdot)^{\alpha-1}\mathcal{T}_{\alpha}^*(\nu-\cdot)\phi^*}_{L^2(I,X^*)}$ we conclude that 
	\begin{equation*}
		\norm{(\nu-\cdot)^{\alpha-1}\mathcal{T}_{\alpha}^*(\nu-\cdot)\phi^*}_{L^{p^{\prime}}(I,U^*)}\ge \frac{\delta}{1+\delta}\left[\norm{\mathcal{S}_{\alpha}^*(\nu)\phi^*}+\norm{(\nu-\cdot)^{\alpha-1}\mathcal{T}_{\alpha}^*(\nu-\cdot)\phi^*}_{L^2(I,X^*)}\right],
	\end{equation*}
	from which the null controllability of the system \eqref{C1} follows.
\end{proof}

We now move forward towards Hypotheses (\textbf{F1})-(\textbf{F4}). Hypotheses (\textbf{F1}) and (\textbf{F2}) are direct consequences of the following Propositions. We refer the reader to the paper \cite{malaguti2016nonsmooth} for the proof.

\begin{proposition}\label{CV}
	For every $\phi\in L^p(\Omega)$, the set $\Upsilon(\phi)$ is nonempty, closed and convex in $L^p(\Omega)$.
\end{proposition}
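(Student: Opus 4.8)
The plan is to observe that, once $\phi$ is fixed, the quantity $\theta_\phi := \int_{\Omega}\Theta(\omega)\phi(\omega)\,d\omega$ is a single real number: indeed $\Theta\in L^{p^{\prime}}(\Omega)$ and $\phi\in L^p(\Omega)$, so by H\"older's inequality the integral converges and $\abs{\theta_\phi}\le \norm{\Theta}_{L^{p^{\prime}}(\Omega)}\norm{\phi}_{L^p(\Omega)}$. Consequently the two bounding functions $g_1:=\psi_1(\cdot,\theta_\phi)$ and $g_2:=\psi_2(\cdot,\theta_\phi)$ are \emph{fixed} elements of $L^p(\Omega)$, by \eqref{W3} (alternatively their membership follows from the uniform bound $\abs{g_i(\tau)}\le\alpha(\tau)$ in \eqref{W5} with $\alpha\in L^p(\Omega,\mathbb{R}^+)$). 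Thus $\Upsilon(\phi)$ is precisely the order interval $\{\psi\in L^p(\Omega): g_1(\tau)\le\psi(\tau)\le g_2(\tau)\ \text{a.a.}\ \tau\in\Omega\}$, and I would establish the three properties for such an interval.

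For nonemptiness I would exhibit an explicit member. By \eqref{W2} the function $\Xi(\cdot,\theta_\phi)$ satisfies $g_1(\tau)\le \Xi(\tau,\theta_\phi)\le g_2(\tau)$ for every $\tau$, and $\Xi(\cdot,\theta_\phi)\in L^p(\Omega)$ by hypothesis; hence $\Xi(\cdot,\theta_\phi)\in\Upsilon(\phi)$ (the lower endpoint $g_1$ would serve equally well, since $g_1\le g_2$ is itself guaranteed by \eqref{W2}). For convexity, take $\psi^{(1)},\psi^{(2)}\in\Upsilon(\phi)$ and $\lambda\in[0,1]$; since the endpoints $g_1,g_2$ do not depend on the choice of $\psi$, the two-sided pointwise inequality is preserved under convex combination,
\begin{equation*}
	g_1(\tau)=\lambda g_1(\tau)+(1-\lambda)g_1(\tau)\le \lambda\psi^{(1)}(\tau)+(1-\lambda)\psi^{(2)}(\tau)\le \lambda g_2(\tau)+(1-\lambda)g_2(\tau)=g_2(\tau)
\end{equation*}
for a.a. $\tau\in\Omega$, so $\lambda\psi^{(1)}+(1-\lambda)\psi^{(2)}\in\Upsilon(\phi)$.

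For closedness, suppose $\{\psi_m\}_{m\in\mathbb{N}}\subset\Upsilon(\phi)$ with $\psi_m\to\psi$ in $L^p(\Omega)$. Since $L^p$-convergence yields a subsequence $\psi_{m_k}\to\psi$ almost everywhere on $\Omega$, and each $\psi_{m_k}$ satisfies $g_1(\tau)\le\psi_{m_k}(\tau)\le g_2(\tau)$ a.e., passing to the limit along this subsequence gives $g_1(\tau)\le\psi(\tau)\le g_2(\tau)$ for a.a. $\tau\in\Omega$; hence $\psi\in\Upsilon(\phi)$ and $\Upsilon(\phi)$ is closed. I expect no serious obstacle here: the entire argument hinges on the single observation that fixing $\phi$ freezes $\theta_\phi$ into a constant, converting $\Upsilon(\phi)$ into an order interval with fixed $L^p$-endpoints. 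The only mildly technical point is the passage to an almost-everywhere convergent subsequence in the closedness step, and I note that the semicontinuity conditions \eqref{W7} play no role in this proposition---they are reserved for proving the weak--weak upper semicontinuity of $\phi\mapsto\Upsilon(\phi)$, which is what is ultimately needed to verify Hypothesis \textbf{(F4)}.
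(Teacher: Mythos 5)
Your proof is correct, and the three steps (Hölder to make $\theta_\phi$ a fixed scalar, the order-interval structure with endpoints $\psi_1(\cdot,\theta_\phi),\psi_2(\cdot,\theta_\phi)\in L^p(\Omega)$, nonemptiness via $\Xi(\cdot,\theta_\phi)$ from \eqref{W2}, convexity by pointwise convex combination, and closedness via an a.e.-convergent subsequence extracted from $L^p$-convergence) are all sound. Note that the paper itself gives no proof of Proposition \ref{CV}, deferring to \cite{malaguti2016nonsmooth}; your self-contained argument is the standard one carried out in that reference, and your closing remark that the semicontinuity conditions \eqref{W7} are not needed here but only for the weak--weak upper semicontinuity in Proposition \ref{F4} is likewise accurate.
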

\begin{proposition}\label{USC}
	The multimap $\Upsilon: L^p(\Omega)\multimap L^p(\Omega)$ is weakly compact; that means the set $\Upsilon(B)$ is relatively weakly compact in $L^p(\Omega)$ for every bounded set $B\subset L^p(\Omega)$.
\end{proposition}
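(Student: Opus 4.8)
The plan is to read off a uniform $L^p$-majorant for every selection $\psi$ lying in $\Upsilon(\phi)$ directly from the domination hypothesis \eqref{W5}, observe that this majorant is independent of $\phi$, and then conclude by reflexivity of $L^p(\Omega)$. The crucial point, on which everything rests, is that the bound $\alpha$ in \eqref{W5} does not depend on the real argument $\theta$, so the two-sided constraint defining $\Upsilon(\phi)$ confines every selection to a fixed ball of $L^p(\Omega)$.

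First I would fix a bounded set $B\subset L^p(\Omega)$ and pick an arbitrary element of $\Upsilon(B)$, say $\psi\in\Upsilon(\phi)$ for some $\phi\in B$. Setting $\theta_\phi:=\int_\Omega \Theta(\omega)\phi(\omega)\,d\omega$, which is a finite real number by H\"older's inequality since $\Theta\in L^{p'}(\Omega)$, the defining inequality of $\Upsilon$ gives $\psi_1(\tau,\theta_\phi)\le \psi(\tau)\le \psi_2(\tau,\theta_\phi)$ for almost all $\tau\in\Omega$. Combining this squeeze with the uniform estimate \eqref{W5}, namely $\abs{\psi_i(\tau,\theta)}\le \alpha(\tau)$ for $i=1,2$ and every $\theta\in\mathbb{R}$, yields $\abs{\psi(\tau)}\le \alpha(\tau)$ for almost all $\tau\in\Omega$. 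Integrating, $\norm{\psi}_{L^p(\Omega)}\le \norm{\alpha}_{L^p(\Omega)}$, and since this estimate is valid for every $\psi\in\Upsilon(B)$ with a constant independent of the choice of $\phi\in B$, the set $\Upsilon(B)$ is bounded in $L^p(\Omega)$.

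Finally, because $1<p\le 2$, the space $L^p(\Omega)$ is reflexive, so by Theorem \ref{DP} every bounded subset is relatively weakly compact (equivalently, every bounded sequence admits a weakly convergent subsequence). Applying this to the bounded set $\Upsilon(B)$ gives that $\Upsilon(B)$ is relatively weakly compact in $L^p(\Omega)$, which is exactly the assertion. Nonemptiness of the individual values $\Upsilon(\phi)$ is already guaranteed by Proposition \ref{CV}, so $\Upsilon(B)$ is indeed a nonempty relatively weakly compact set.

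I do not expect a genuine obstacle here: the whole argument reduces to the single observation that the majorant in \eqref{W5} is uniform in $\theta$, which decouples the bound on $\psi$ from the (a priori unbounded) range of the inner integral $\theta_\phi$ and hence even from the boundedness of $B$. Once this uniform domination by $\alpha\in L^p(\Omega,\mathbb{R}^+)$ is recorded, the relative weak compactness is a direct consequence of reflexivity, with no compactness or convergence machinery beyond Theorem \ref{DP} required.
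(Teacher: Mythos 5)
Your proof is correct: the uniform majorant $\alpha$ in \eqref{W5}, being independent of $\theta$, squeezes every selection $\psi\in\Upsilon(\phi)$ into $\abs{\psi(\tau)}\le\alpha(\tau)$ a.e., so $\Upsilon(B)$ lies in the ball of radius $\norm{\alpha}_{L^p(\Omega)}$ and relative weak compactness follows from reflexivity of $L^p(\Omega)$ for $1<p\le 2$. The paper itself does not prove this proposition but defers to \cite{malaguti2016nonsmooth}, and your argument is exactly the standard one used there, including your (accurate) observation that boundedness of $B$ is not even needed since the domination is uniform in $\theta$.
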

The following propositions are based on continuity properties of the multivalued map $\Upsilon$. 

\begin{proposition}\label{F4}
	The multivalued map $\Upsilon:L^p(\Omega)\multimap L^p(\Omega)$ is weakly weakly upper semicontinuous.
\end{proposition}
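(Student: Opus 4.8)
The plan is to verify the sequential characterization of weakly weakly upper semicontinuity recalled in the lemma following Definition \ref{dww}: I take an arbitrary sequence $\phi_n\rightharpoonup\phi$ in $L^p(\Omega)$ together with selections $\psi_n\in\Upsilon(\phi_n)$, and I aim to extract a subsequence $\psi_{n_k}\rightharpoonup\psi$ with $\psi\in\Upsilon(\phi)$.

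First I would reduce the weak convergence of the arguments to convergence of a scalar. Writing $\theta_n=\int_\Omega\Theta(\omega)\phi_n(\omega)d\omega$ and $\theta=\int_\Omega\Theta(\omega)\phi(\omega)d\omega$, the hypothesis $\Theta\in L^{p'}(\Omega)$ means $\Theta$ represents an element of the dual of $L^p(\Omega)$, so $\phi_n\rightharpoonup\phi$ forces $\theta_n\to\theta$ in $\mathbb{R}$. Next I use the uniform bound \eqref{W5}: since $\psi_1(\tau,\theta_n)\le\psi_n(\tau)\le\psi_2(\tau,\theta_n)$ and $\abs{\psi_i(\tau,\cdot)}\le\alpha(\tau)$ with $\alpha\in L^p(\Omega,\mathbb{R}^+)$, we obtain $\abs{\psi_n(\tau)}\le\alpha(\tau)$ a.e., so $\{\psi_n\}$ is bounded in the reflexive space $L^p(\Omega)$ and, after passing to a subsequence, $\psi_{n_k}\rightharpoonup\psi$ in $L^p(\Omega)$.

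The main work is to show $\psi\in\Upsilon(\phi)$, that is $\psi_1(\tau,\theta)\le\psi(\tau)\le\psi_2(\tau,\theta)$ a.e.; this is the delicate point, because weak convergence does not preserve pointwise inequalities. I would test against an arbitrary nonnegative $\zeta\in L^{p'}(\Omega)$. From $\psi_{n_k}(\tau)\le\psi_2(\tau,\theta_{n_k})$ and the domination $\psi_2(\tau,\theta_{n_k})\zeta(\tau)\le\alpha(\tau)\zeta(\tau)\in L^1(\Omega)$, the reverse Fatou lemma together with the upper-semicontinuity half of \eqref{W7} (which gives $\limsup_k\psi_2(\tau,\theta_{n_k})\le\psi_2(\tau,\theta)$) yields
\[
\int_\Omega\psi(\tau)\zeta(\tau)d\tau=\lim_{k}\int_\Omega\psi_{n_k}\zeta\,d\tau\le\limsup_{k}\int_\Omega\psi_2(\cdot,\theta_{n_k})\zeta\,d\tau\le\int_\Omega\psi_2(\cdot,\theta)\zeta\,d\tau.
\]
As $\zeta\ge0$ is arbitrary, this delivers $\psi\le\psi_2(\cdot,\theta)$ a.e.; symmetrically, the standard Fatou lemma applied with the lower bound $\psi_1(\tau,\theta_{n_k})\zeta(\tau)\ge-\alpha(\tau)\zeta(\tau)$ and the lower-semicontinuity half of \eqref{W7} gives $\psi\ge\psi_1(\cdot,\theta)$ a.e. Hence $\psi\in\Upsilon(\phi)$, which is exactly the property required.

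I expect the passage of the two-sided pointwise bound through the weak limit to be the only genuine obstacle; the device that overcomes it is the combination of the $L^p$-domination from \eqref{W5} (which legitimizes the (reverse) Fatou estimates against nonnegative $L^{p'}$ weights) with the one-sided semicontinuity of $\psi_1$ and $\psi_2$ in the second argument from \eqref{W7}. Everything else, namely the scalarization through $\Theta$ and the extraction of a weakly convergent subsequence, is routine given the reflexivity of $L^p(\Omega)$ for $1<p\le2$.
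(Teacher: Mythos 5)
Your proof is correct, but it takes a genuinely different route to the key step than the paper does. The paper first invokes Proposition \ref{USC} to reduce weak weak upper semicontinuity to the weak closedness of the graph of $\Upsilon$, and then handles the passage of the two-sided pointwise constraint through the weak limit via Mazur's convexity theorem: convex combinations $\xi_m=\sum_i\alpha_{m,i}\psi_{m+i}$ converge strongly, hence a.e.\ along a subsequence, to $\psi$; since the defining inequalities are stable under convex combinations, the bounds $\sum_i\alpha_{m,i}\psi_1(\tau,\theta_{m+i})\le\xi_m(\tau)\le\sum_i\alpha_{m,i}\psi_2(\tau,\theta_{m+i})$ pass to the pointwise limit using exactly the semicontinuity conditions \eqref{W7}. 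You instead never leave the weak topology: you test against nonnegative $\zeta\in L^{p'}(\Omega)$ and combine the domination $\abs{\psi_i(\tau,\theta)}\le\alpha(\tau)$ from \eqref{W5} (which makes $\alpha\zeta\in L^1(\Omega)$ by H\"older) with Fatou and reverse Fatou plus \eqref{W7}, deducing $\int_\Omega(\psi-\psi_2(\cdot,\theta))\zeta\,d\tau\le 0$ for all $\zeta\ge 0$ and hence $\psi\le\psi_2(\cdot,\theta)$ a.e.\ (here \eqref{W3} guarantees $\psi_2(\cdot,\theta)\in L^p(\Omega)$, so the a.e.\ conclusion via indicator functions on the finite-measure set $\Omega$ is legitimate), and symmetrically for $\psi_1$. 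Both arguments ultimately exploit the same structural fact — the order-theoretic description of $\Upsilon(\phi)$ — but package it differently: your duality-plus-Fatou device is more elementary and self-contained, since the subsequence extraction comes directly from the uniform bound $\abs{\psi_n}\le\alpha$ and reflexivity rather than from Proposition \ref{USC}, and it verifies the sequential sufficient condition for weak weak upper semicontinuity stated after Definition \ref{dww} without ever needing a.e.\ convergence of the $\psi_n$ themselves; the paper's Mazur route, on the other hand, works directly with pointwise inequalities and is the standard template reused elsewhere in this literature. One small point worth making explicit in your write-up: the semicontinuity in \eqref{W7} is stated as a limit over $\theta\to\theta_0$, so to get $\limsup_k\psi_2(\tau,\theta_{n_k})\le\psi_2(\tau,\theta)$ you should note that terms with $\theta_{n_k}=\theta$ satisfy the bound trivially — a triviality, but it closes the argument cleanly.
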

\begin{proof}
	Following Proposition \eqref{USC}, the weak weak upper semicontinuity of $\Upsilon$ is equivalent to the graph of $\Upsilon$ being weakly closed. Hence, we show the following facts:
	
	Suppose $\{\psi_n\}_{n\in \mathbb{N}} \subset L^p(\Omega), \{\phi_n\}_{n\in \mathbb{N}} \subset L^p(\Omega)$ be such that $\psi_n\rightharpoonup \psi$, $\phi_n\rightharpoonup \phi$ in $L^p(\Omega)$ and $\psi_n\in \Upsilon(\phi_n)$. Then we show $\psi\in \Upsilon(\phi)$. The fact that $\psi_n\in \Upsilon(\phi_n)$ gives
	\begin{equation}\label{4.24}
		\psi_1\left(\tau,\int_{\Omega}\Theta(\omega)\phi_n(\omega)d\omega \right)\le \psi_n(\tau)\le \psi_2\left(\tau,\int_{\Omega}\Theta(\omega)\phi_n(\omega)d\omega\right), \tau\in \Omega.
	\end{equation}
	We show $\psi\in L^p(\Omega)$ with 
	\begin{equation}\label{4.25}
		\psi_1\left(\tau,\int_{\Omega}\Theta(\omega)\phi(\omega)d\omega \right)\le \psi(\tau)\le \psi_2\left(\tau,\int_{\Omega}\Theta(\omega)\phi(\omega)d\omega\right), \tau\in \Omega.
	\end{equation}
	As $\psi_n\rightharpoonup \psi$ in $L^p(\Omega)$, by Mazur's convexity theorem we have a subsequence 
	\begin{equation}
		\displaystyle \xi_m=\sum_{i=0}^{k_m}\alpha_{m,i}\psi_{m+i},~~ \alpha_{m,i}\ge 0,~~ \sum_{i=0}^{k_m}\alpha_{m,i}=1	
	\end{equation}
	satisfying $\xi_m\to \psi$ in $L^p(\Omega)$ and, upto subsequence, there is $N\subset \mathbb{R}$ with Lebesgue measure zero such that $\xi_m(t)\to \psi(t)$ for all $t\in \mathbb{R}\setminus N$. 
	From the expression \eqref{4.24} we compute
	\begin{equation}\label{4456}
		\sum_{i=0}^{k_m}\alpha_{m,i}	\psi_1\left(\tau,\int_{\Omega}\Theta(\omega)\phi_{m+i}(\omega)d\omega \right)\le \xi_m(\tau)\le 	\sum_{i=0}^{k_m}\alpha_{m,i}\psi_2\left(\tau,\int_{\Omega}\Theta(\omega)\phi_{m+i}(\omega)d\omega\right), 
	\end{equation}
	$\tau\in \Omega$. As $\Theta\in L^{p^{\prime}}(\Omega)$, by the definition of the weak convergence in $L^p(\Omega)$ we obtain
	\begin{equation}
		\int_{\Omega}\Theta(\omega)\phi_n(\omega)d\omega\to \int_{\Omega}\Theta(\omega)\phi(\omega)d\omega~\text{as}~n\to \infty.
	\end{equation}
	The lower semicontinuity of the function $\psi_1(\tau,\cdot)$ and the upper semicontinuity of the function $\psi_2(\tau,\cdot)$, passing limit in \eqref{4456}
	\begin{equation}
		\psi_1\left(\tau,\int_{\Omega}\Theta(\omega)\phi(\omega)d\omega \right)\le\psi(\tau) \le\psi_2\left(\tau,\int_{\Omega}\Theta(\omega)\phi(\omega)d\omega\right), \tau\in \Omega.
	\end{equation}
	This proves that the multimap $\Upsilon$ has a weakly closed graph. This completes the proof.

\end{proof}
We now verify Hypothesis (\textbf{F2}). 
\begin{proposition}\label{F5}
	For every $N\in \mathbb{N}$ there exists $\eta_N\in L^{\textcolor{red}{\frac{1}{\alpha_1}}}(I;\mathbb{R}^+)$ such that
	\begin{equation*}
		\norm{F(t,\phi)}_X\le \eta_N(t), ~\text{a.a.}~t\in I,		
	\end{equation*}
	$~\text{and for every}~\phi\in X, \norm{\phi}_X\le N$,
	with
	\begin{equation}\label{hh}
		\liminf_{N\to \infty}\frac{1}{N}\int_{0}^{\nu}\textcolor{red}{(\nu-s)^{\alpha-1}}\eta_N(s)ds=0.
	\end{equation}
\end{proposition}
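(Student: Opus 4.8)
The plan is to exploit the uniform pointwise bound $\abs{\psi_i(\tau,\theta)}\le \alpha(\tau)$ from \eqref{W5} together with the boundedness of $b$ from (b3), which will produce a bound on $\norm{F(t,\phi)}_X$ that is independent of both $t$ and $\phi$, and in particular independent of $N$. Consequently $\eta_N$ may be taken to be a single constant function for all $N$, and the required $\liminf$ condition then becomes trivial, since the numerator stays bounded while the denominator $N\to\infty$.

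First I would fix $\phi\in L^p(\Omega)$ with $\norm{\phi}_X\le N$ and an arbitrary selection $\psi\in \Upsilon(\phi)$. By the definition of $\Upsilon(\phi)$ we have, for a.a.\ $\tau\in\Omega$,
\[
\psi_1\!\left(\tau,\int_\Omega \Theta(\omega)\phi(\omega)\,d\omega\right)\le \psi(\tau)\le \psi_2\!\left(\tau,\int_\Omega \Theta(\omega)\phi(\omega)\,d\omega\right).
\]
Combining these order bounds with \eqref{W5} yields the single envelope $\abs{\psi(\tau)}\le \alpha(\tau)$ for a.a.\ $\tau\in\Omega$. Since $F(t,\phi)(\tau)=b(t,\tau)\psi(\tau)$, condition (b3) then gives $\abs{F(t,\phi)(\tau)}\le m\,\alpha(\tau)$, and integrating the $p$-th power over $\Omega$ produces
\[
\norm{F(t,\phi)}_{L^p(\Omega)}\le m\,\norm{\alpha}_{L^p(\Omega)}.
\]
As this bound is valid for every admissible $\psi$, it bounds the whole set $F(t,\phi)$ in $X$ uniformly in $t$ and in $\phi$.

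With $C:=m\,\norm{\alpha}_{L^p(\Omega)}$, I would set $\eta_N\equiv C$ for every $N$. Because $I=[0,\nu]$ has finite measure and $\tfrac{1}{\alpha_1}\ge 1$ (since $\alpha_1<\alpha<1$), the constant function $\eta_N$ lies in $L^{1/\alpha_1}(I;\mathbb{R}^+)$, and the pointwise bound $\norm{F(t,\phi)}_X\le \eta_N(t)$ holds for all $\phi$ with $\norm{\phi}_X\le N$. It then remains to verify the growth condition. Since $\alpha-1>-1$, the kernel $(\nu-\cdot)^{\alpha-1}$ is integrable on $[0,\nu]$ and
\[
\int_0^\nu (\nu-s)^{\alpha-1}\eta_N(s)\,ds = C\int_0^\nu (\nu-s)^{\alpha-1}\,ds = C\,\frac{\nu^{\alpha}}{\alpha},
\]
a finite constant independent of $N$. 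Dividing by $N$ and letting $N\to\infty$ gives $\displaystyle \liminf_{N\to\infty}\frac{1}{N}\int_0^\nu (\nu-s)^{\alpha-1}\eta_N(s)\,ds=0$, as required.

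There is no genuine obstacle here: the only step requiring care is the passage from the two order bounds in the definition of $\Upsilon$ to the single pointwise envelope $\abs{\psi}\le \alpha$, after which uniformity in $N$ makes the $\liminf$ automatic. In particular, in contrast with the general Hypothesis (\textbf{F2}), where $\eta_N$ is permitted to grow with $N$, here the nonlinearity is globally bounded by $m\,\norm{\alpha}_{L^p(\Omega)}$, so the construction collapses to a constant and the limit is in fact an ordinary limit equal to zero.
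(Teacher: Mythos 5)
Your proposal is correct and follows essentially the same route as the paper: extract the pointwise envelope for $\psi\in\Upsilon(\phi)$ from \eqref{W5}, multiply by the bound $m$ on $b$ from (b3), and take $\eta_N$ to be a constant so that \eqref{hh} holds trivially. The only (harmless) differences are that you read the sharper bound $\abs{\psi(\tau)}\le\alpha(\tau)$ directly off the sandwich $\psi_1\le\psi\le\psi_2$, whereas the paper uses the cruder $\abs{\psi}\le\abs{\psi_1}+\abs{\psi_2}\le 2\alpha$, and that you spell out the kernel integrability and the $\liminf$ computation the paper dismisses as obvious.
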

\begin{proof}
	Let $\psi\in \Upsilon(\phi)$, with $\norm{\phi}_{L^p(\mathbb{R})}\le N$. The definition of the multimap $\Upsilon$ permits
	\begin{equation}
		\psi_1\left(\tau,\int_{\Omega}\Theta(\omega)\phi(\omega)d\omega \right)\le \psi(\tau)\le \psi_2\left(\tau,\int_{\Omega}\Theta(\omega)\phi(\omega)d\omega\right), ~\tau\in \Omega.
	\end{equation}
	Utilizing \eqref{W5} we obtain
	\begin{equation}
		\abs{\psi(\tau)}\le \abs{\psi_1\left(\tau,\int_{\Omega}\Theta(\omega)\phi(\omega)d\omega \right)}+\abs{\psi_2\left(\tau,\int_{\Omega}\Theta(\omega)\phi(\omega)d\omega \right)}\le 2\alpha(\tau).
	\end{equation}
	Consequently,
	\begin{equation}
		\abs{b(t,\tau)\psi(\tau)}\le 2m\alpha(\tau), \tau\in \Omega.
	\end{equation}
	Also,
	\begin{equation}
		\int_{\Omega}\abs{b(t,\tau)\psi(\tau)}^pd\tau\le 2^pm^p\int_{\Omega}\alpha^p(\tau)d\tau,
	\end{equation}
	consequently,
	\begin{equation}
		\norm{b(t,\cdot)\psi}_{L^p(\Omega)}\le 2m\norm{\alpha}_{L^p(\Omega)}.
	\end{equation}
	Define $\eta_N(t)=2m\norm{\alpha}_{L^p(\Omega)}$ and obviously \eqref{hh} holds.
\end{proof}
We are now ready to conclude this section. Based on the discussion in subsection 2.2., hypothesis (\textbf{T}) is verified. By Proposition \ref{U}, Hypothesis (\textbf{U})   holds. Further, taking into accout Propositions \ref{CV}-\ref{F5} we conclude that Hypotheses (\textbf{F1}), (\textbf{F2}), (\textbf{F4}) are satisfied. Hypothesis (\textbf{F3}) follows from the measurability of the mapping $t\mapsto b(t,\tau)$.
In conclusion, the system \eqref{M1} is null controllable in $[0, \nu]$.
\section{Appendix}
\begin{theorem}\label{Thm6.1}
	Define $\Xi:L^2(I,X)\to X$ by
	\begin{equation*}
		\Xi(f)=\int_{0}^{\nu}(\nu-s)^{\alpha-1}\mathcal{T}_{\alpha}(\nu-s)f(s)ds, f\in L^2(I,X).
	\end{equation*}
	If $f_n\rightharpoonup f$ in $L^2(I,X)$ then $\Xi(f_n)\rightharpoonup \Xi (f)$ in $X$.
\end{theorem}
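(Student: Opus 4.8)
The plan is to recognize $\Xi$ as a bounded linear operator from $L^2(I,X)$ into $X$ and then invoke the elementary principle that every bounded linear operator is continuous from the weak topology of its domain to the weak topology of its codomain. This reduces the entire statement to a single norm estimate, and in particular requires neither compactness nor reflexivity arguments.

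First I would verify that $\Xi$ is well defined and bounded. Linearity is immediate from the linearity of the Bochner integral. For measurability of the integrand, the strong continuity of $\mathcal{T}_{\alpha}$ (Lemma \ref{Properties of Salpha Talpha}(ii)) together with the strong measurability of $f$ makes $s\mapsto \mathcal{T}_{\alpha}(\nu-s)f(s)$ strongly measurable, so $s\mapsto (\nu-s)^{\alpha-1}\mathcal{T}_{\alpha}(\nu-s)f(s)$ is strongly measurable. Using the bound $\norm{\mathcal{T}_{\alpha}(\nu-s)}_{\mathcal{L}(X)}\le M/\Gamma(\alpha)$ from Lemma \ref{Properties of Salpha Talpha}(i) and H\"older's inequality with conjugate exponents $2$ and $2$,
\begin{equation*}
	\norm{\Xi(f)}\le \frac{M}{\Gamma(\alpha)}\int_{0}^{\nu}(\nu-s)^{\alpha-1}\norm{f(s)}ds\le \frac{M}{\Gamma(\alpha)}\left(\int_{0}^{\nu}(\nu-s)^{2(\alpha-1)}ds\right)^{1/2}\norm{f}_{L^2(I,X)},
\end{equation*}
where the scalar integral is finite for the admissible range of $\alpha$ (so that $(\nu-\cdot)^{\alpha-1}\in L^2(I)$). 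Consequently the integrand is Bochner integrable, $\Xi(f)\in X$ is well defined, and $\Xi\in \mathcal{L}(L^2(I,X),X)$.

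Next I would deduce the weak-to-weak continuity. Fix an arbitrary $x^*\in X^*$. Since $\Xi$ is bounded and linear and $x^*$ is a bounded linear functional on $X$, the composition $x^*\circ \Xi$ is a bounded linear functional on $L^2(I,X)$, that is, $x^*\circ \Xi\in (L^2(I,X))^*$. By the very definition of weak convergence $f_n\rightharpoonup f$ in $L^2(I,X)$, testing against this functional yields $\langle x^*,\Xi(f_n)\rangle=(x^*\circ\Xi)(f_n)\to (x^*\circ\Xi)(f)=\langle x^*,\Xi(f)\rangle$. As $x^*\in X^*$ was arbitrary, this is precisely the assertion that $\Xi(f_n)\rightharpoonup \Xi(f)$ in $X$.

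The argument contains no genuinely hard step; the only point demanding care is the first one, namely confirming Bochner integrability and boundedness of $\Xi$, which rests on the integrability of the singular weight $(\nu-\cdot)^{\alpha-1}$ paired with an $L^2$ function and hence on the range of $\alpha$. Once $\Xi$ is known to be a bounded linear map, the passage to weak limits is automatic, and I expect this to be the cleanest route to the stated conclusion.
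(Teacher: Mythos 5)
Your proposal is correct and takes essentially the same route as the paper: the paper likewise fixes $x^*\in X^*$, observes that $\Phi = x^*\circ \Xi$ is a bounded linear functional on $L^2(I,X)$, and concludes directly from the definition of weak convergence that $\Phi(f_n)\to\Phi(f)$, i.e.\ $\Xi(f_n)\rightharpoonup\Xi(f)$. The only difference is that you make explicit the boundedness estimate the paper dismisses as ``clear,'' including the point that it rests on $(\nu-\cdot)^{\alpha-1}\in L^2(I)$ (i.e.\ $\alpha>1/2$), a condition the paper's proof uses implicitly as well.
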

\begin{proof}
	Let $x^*$ belongs to the dual of $X$. Consider the operator $	\Phi: L^2(I,X)\to \mathbb{R}$ by
	\begin{equation*}
		\Phi(f)=x^*\left(\int_{0}^{\nu}(\nu-s)^{\alpha-1} \mathcal{T}_{\alpha}(\nu-s)f(s)ds\right), f\in L^2(I,X).
	\end{equation*}
	Then, it is clear that $\Phi$ is also linear and bounded. Hence $\Phi$ belongs to the dual of $L^2(I,X)$. Therefore, the weak convergence of $f_n\rightharpoonup f$ in $L^2(I,X)$ implies
	\begin{equation*}
		\Phi (f_n)\to \Phi(f) ~\text{in}~\mathbb{R}.
	\end{equation*} 
	Hence
	\begin{equation*}
		x^*\left(\int_{0}^{\nu} (\nu-s)^{\alpha-1}\mathcal{T}_{\alpha}(\nu-s)f_n(s)ds\right)\to x^*\left(\int_{0}^{\nu}(\nu-s)^{\alpha-1} \mathcal{T}_{\alpha}(\nu-s)f(s)ds\right),
	\end{equation*}
	which gives
	\begin{equation*}
		\int_{0}^{\nu}(\nu-s)^{\alpha-1} \mathcal{T}_{\alpha}(\nu-s)f_n(s)ds\rightharpoonup \int_{0}^{\nu}(\nu-s)^{\alpha-1} \mathcal{T}_{\alpha}(\nu-s)f(s)ds~\text{in}~X.
	\end{equation*}
\end{proof}
\begin{lemma}\cite{kumbhakar2025p}\label{Lem6.2}
	For each $n\in \mathbb{N}$ define the map $\mathbb{P}_n:L^2(I,X)\to L^2(I,X)$ by
	\begin{equation}\label{Pn}
		(\mathbb{P}_nf)(t)=P_nf(t), t\in I.
	\end{equation}
	Then we have $\mathbb{P}_nf_m\rightharpoonup \mathbb{P}_nf$ in $L^2(I,X)$ whenever $f_m\rightharpoonup f$ in $L^2(I,X)$.
\end{lemma}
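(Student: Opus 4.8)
The plan is to deduce the statement from the elementary fact that a bounded linear operator between Banach spaces is automatically continuous from the weak topology to the weak topology. The whole argument therefore reduces to checking that $\mathbb{P}_n$ is a bounded linear operator on $L^2(I,X)$, after which the weak continuity of $\mathbb{P}_n$ is immediate.

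First I would verify that $\mathbb{P}_n$ is well defined and bounded. Since $P_n:X\to X$ is continuous and linear, the map $t\mapsto P_nf(t)$ is strongly measurable whenever $f$ is, so $\mathbb{P}_nf$ is a measurable $X$-valued function. By Proposition \ref{Prop1}(1) there is a constant $K\ge 1$, independent of $n$, with $\norm{P_nx}\le K\norm{x}$ for all $x\in X$. Hence
\begin{equation*}
	\norm{\mathbb{P}_nf}_{L^2(I,X)}^2=\int_I\norm{P_nf(t)}^2\,dt\le K^2\int_I\norm{f(t)}^2\,dt=K^2\norm{f}_{L^2(I,X)}^2,
\end{equation*}
so that $\mathbb{P}_nf\in L^2(I,X)$ and $\mathbb{P}_n\in\mathcal{L}(L^2(I,X))$ with $\norm{\mathbb{P}_n}\le K$. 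Linearity of $\mathbb{P}_n$ is inherited directly from that of $P_n$.

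Next, I would fix an arbitrary functional $\Phi$ in the dual of $L^2(I,X)$. Because $\mathbb{P}_n$ is bounded and linear, the composition $\Phi\circ\mathbb{P}_n$ is again a bounded linear functional on $L^2(I,X)$. The assumption $f_m\rightharpoonup f$ in $L^2(I,X)$ then yields
\begin{equation*}
	\Phi(\mathbb{P}_nf_m)=(\Phi\circ\mathbb{P}_n)(f_m)\to(\Phi\circ\mathbb{P}_n)(f)=\Phi(\mathbb{P}_nf),\quad\text{as }m\to\infty.
\end{equation*}
Since $\Phi$ was arbitrary, this is precisely the statement that $\mathbb{P}_nf_m\rightharpoonup\mathbb{P}_nf$ in $L^2(I,X)$, which completes the argument.

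There is no genuine obstacle here: the only point requiring any care is the verification that $\mathbb{P}_n$ maps $L^2(I,X)$ into itself with a norm bound, and this rests entirely on the uniform boundedness of the natural projections furnished by the basis constant in Proposition \ref{Prop1}. Everything else is the standard observation that bounded linear maps preserve weak convergence, which is exactly the computation already carried out in the body of the paper with $\Psi=f^*\circ\mathbb{P}_n$.
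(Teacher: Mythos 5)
Your proof is correct and follows essentially the same route as the paper: the paper's inline argument (in Step (ii) of the existence and controllability proofs) likewise observes that $\mathbb{P}_n$ is linear and bounded because $P_n$ is, and then composes with an arbitrary dual functional $\Psi=f^*\circ\mathbb{P}_n$ to transfer weak convergence. Your added verifications (strong measurability of $t\mapsto P_nf(t)$ and the explicit norm bound $\norm{\mathbb{P}_n}\le K$ from the basis constant in Proposition \ref{Prop1}) merely make explicit what the paper leaves implicit.
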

\begin{lemma}\cite{kumbhakar2025p}\label{Lem6.3}
	For each $n\in \mathbb{N}$ define the map $\mathbb{P}_n:L^2(I,X)\to L^2(I,X)$ by
	\begin{equation}\label{117}
		(\mathbb{P}_nf)(t)=P_nf(t), t\in I.
	\end{equation}
	Then if $f_n\rightharpoonup f$ in $L^2(I,X)$ and there exists $\eta_{\Omega}\in L^1(I,\mathbb{R}^+)$ such that\\ $\norm{f_n(t)}\le \eta_{\Omega}(t)$ a.a. $t\in I$, we have $\mathbb{P}_nf_n\rightharpoonup f$ in $L^2(I,X)$.
\end{lemma}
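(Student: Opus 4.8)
The plan is to verify the weak convergence straight from its definition by pairing $\mathbb{P}_nf_n$ against an arbitrary continuous linear functional and transferring the projection onto the test element through the adjoint. Because $X$ is separable and reflexive, $X^*$ has the Radon--Nikod\'ym property and the dual of $L^2(I,X)$ is isometrically $L^2(I,X^*)$; hence it suffices to show, for every $g\in L^2(I,X^*)$, that
\begin{equation*}
\int_I\langle g(t),P_nf_n(t)\rangle\,dt\longrightarrow\int_I\langle g(t),f(t)\rangle\,dt .
\end{equation*}
Since $\langle g(t),P_nf_n(t)\rangle=\langle P_n^*g(t),f_n(t)\rangle$, the adjoint of $\mathbb{P}_n$ acts by $(\mathbb{P}_n^*g)(t)=P_n^*g(t)$, and I would split
\begin{equation*}
\langle g,\mathbb{P}_nf_n\rangle-\langle g,f\rangle=\langle \mathbb{P}_n^*g-g,\,f_n\rangle+\langle g,\,f_n-f\rangle .
\end{equation*}

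The second term tends to $0$ at once, because $f_n\rightharpoonup f$ in $L^2(I,X)$ and $g$ is a fixed element of the dual. For the first term I would use Cauchy--Schwarz in the duality pairing,
\begin{equation*}
\abs{\langle \mathbb{P}_n^*g-g,\,f_n\rangle}\le \norm{\mathbb{P}_n^*g-g}_{L^2(I,X^*)}\,\norm{f_n}_{L^2(I,X)},
\end{equation*}
and observe that $\{\norm{f_n}_{L^2(I,X)}\}$ is bounded, since a weakly convergent sequence is norm bounded (the integrable bound $\norm{f_n(t)}\le\eta_\Omega(t)$ likewise secures boundedness). Thus the whole matter reduces to the strong convergence $\mathbb{P}_n^*g\to g$ in $L^2(I,X^*)$.

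The crux---and the only place where reflexivity is genuinely used---is this last convergence of the adjoint projections. I would invoke the classical fact that a Schauder basis of a reflexive Banach space is shrinking; equivalently, the biorthogonal functionals form a basis of $X^*$, so that $P_n^*x^*\to x^*$ in $X^*$ for every $x^*\in X^*$. Applied pointwise to $g$ this gives $P_n^*g(t)\to g(t)$ in $X^*$ for almost every $t\in I$, while the monotone basis constant ($\norm{P_n}\le 1$) furnishes the domination $\norm{P_n^*g(t)-g(t)}^2\le 4\norm{g(t)}^2\in L^1(I)$. The dominated convergence theorem then yields $\norm{\mathbb{P}_n^*g-g}_{L^2(I,X^*)}\to 0$, which closes the estimate and proves $\mathbb{P}_nf_n\rightharpoonup f$. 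I expect the shrinking-basis step to be the main obstacle: without reflexivity the adjoint projections converge only in the weak$^*$ sense, which is too weak to control the first term, so the reflexivity of $X$ is essential here---and is precisely what separates this diagonal statement from the fixed-index Lemma \ref{Lem6.2}.
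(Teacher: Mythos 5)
Your proof is correct. The paper itself offers no argument for Lemma \ref{Lem6.3} (it is quoted from \cite{kumbhakar2025p}), so the comparison can only be with the natural proof of that source, and your route is exactly that: identify $(L^2(I,X))^*$ with $L^2(I,X^*)$ via the Radon--Nikod\'ym property of the reflexive dual, split $\langle g,\mathbb{P}_nf_n\rangle-\langle g,f\rangle=\langle \mathbb{P}_n^*g-g,f_n\rangle+\langle g,f_n-f\rangle$, kill the second term by weak convergence, and reduce the first to the strong convergence $\mathbb{P}_n^*g\to g$ in $L^2(I,X^*)$, which you correctly obtain from the classical fact that every basis of a reflexive space is shrinking (so $P_n^*x^*\to x^*$ in norm) together with the uniform bound $\sup_n\norm{P_n^*}<\infty$ and dominated convergence. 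Two small remarks. First, your parenthetical that the bound $\norm{f_n(t)}\le\eta_\Omega(t)$ ``likewise secures boundedness'' is wrong as stated: $\eta_\Omega$ is only in $L^1(I,\mathbb{R}^+)$, so it controls $\norm{f_n}_{L^1(I,X)}$, not $\norm{f_n}_{L^2(I,X)}$; fortunately this is harmless, since weak convergence in $L^2(I,X)$ already gives the needed $L^2$-boundedness by the uniform boundedness principle, which is what your Cauchy--Schwarz step actually uses. Second, and more interestingly, your argument never uses the domination hypothesis at all: the conclusion $\mathbb{P}_nf_n\rightharpoonup f$ follows from $f_n\rightharpoonup f$ alone, given reflexivity and the Schauder basis. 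So your proof is not merely correct but establishes a slightly stronger statement than the lemma as quoted; the integrable bound is an artifact of how the lemma is invoked in the controllability arguments (where the $f_n$ are produced with such a bound via Hypothesis (\textbf{F2})), not a genuine ingredient of this weak-convergence fact.
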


In {\cite{MR3320658}} Let us consider an abstract operator $S:L^{p}(I,X)\to C(I,X)$ satisfying the following conditions
	\begin{itemize}
		\item[(S1)] There exists $c\geq0$ such that
        \begin{equation}
			\norm{Sf-Sg}_{C(I,X)}\le c\norm{f-g}_{L^{p}(I,X)}.
		\end{equation}
		\item[(S2)] For any compact $K\subset X$ and sequence $\{f_n\}_{n\in \mathbb{N}}\subset L^{p}(I,X)$ such that $\{f_n(t)\}_{n\in \mathbb{N}}\subset K$ for a.a. $t\in I$, the weak convergence $f_n\rightharpoonup f_0$ in $L^{p}(I,X)$ implies $Sf_n\to Sf_0$ in $C(I,X)$.
	\end{itemize}
    

    
We now provide an example of an operator satisfying properties (S1) and (S2). \\
\begin{lemma}\label{GG}
	Define an operator $G: L^2(I,X)\to C(I,X)$ by
	\begin{equation}\label{G}
		(Gf)(t)=\int_{0}^{t}(t-s)^{\alpha-1}\mathcal{T}_{\alpha}(t-s)f(s)ds, f\in L^2(I,X).
	\end{equation}
	Then $G$ satisfies properties (S1) and (S2).
\end{lemma}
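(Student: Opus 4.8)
The plan is to verify properties (S1) and (S2) separately, the first being a direct convolution estimate and the second carrying the real content. For (S1), fix $f,g\in L^2(I,X)$ and $t\in I$. Using the bound $\norm{\mathcal{T}_{\alpha}(r)}_{\mathcal{L}(X)}\le M/\Gamma(\alpha)$ from Lemma \ref{Properties of Salpha Talpha} and Hölder's inequality with exponent $2$ (legitimate since $2(\alpha-1)>-1$, because $\alpha>1/p=1/2$),
\begin{align*}
\norm{(Gf)(t)-(Gg)(t)}&\le \frac{M}{\Gamma(\alpha)}\int_0^t (t-s)^{\alpha-1}\norm{f(s)-g(s)}\,ds\\
&\le \frac{M}{\Gamma(\alpha)}\Big(\int_0^t (t-s)^{2(\alpha-1)}\,ds\Big)^{1/2}\norm{f-g}_{L^2(I,X)}\\
&\le \frac{M}{\Gamma(\alpha)}\Big(\frac{\nu^{2\alpha-1}}{2\alpha-1}\Big)^{1/2}\norm{f-g}_{L^2(I,X)}.
\end{align*}
Taking the supremum over $t\in I$ gives (S1) with $c=\frac{M}{\Gamma(\alpha)}(\nu^{2\alpha-1}/(2\alpha-1))^{1/2}$.

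For (S2), let $K\subset X$ be compact, $f_n(t)\in K$ for a.a.\ $t$, and $f_n\rightharpoonup f_0$ in $L^2(I,X)$. I would establish $Gf_n\to Gf_0$ in $C(I,X)$ by combining pointwise strong convergence with equicontinuity. First, for pointwise weak convergence, fix $t$ and $x^*\in X^*$: the map $s\mapsto (t-s)^{\alpha-1}\mathcal{T}_{\alpha}^*(t-s)x^*$ lies in $L^2([0,t],X^*)$ (its norm is dominated by $(t-s)^{\alpha-1}M\norm{x^*}/\Gamma(\alpha)$, which is square-integrable since $\alpha>1/2$), so $\langle x^*,(Gf_n)(t)\rangle\to\langle x^*,(Gf_0)(t)\rangle$, i.e.\ $(Gf_n)(t)\rightharpoonup (Gf_0)(t)$ in $X$ for each $t$.

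The crux is to show that $\{(Gf_n)(t)\}_n$ is relatively compact in $X$ for each fixed $t$; this is where the absence of compactness of the semigroup must be compensated by compactness of $K$. Writing $C_K=\sup_{x\in K}\norm{x}<\infty$, for $\epsilon\in(0,t)$ I split $(Gf_n)(t)=\int_0^{t-\epsilon}(\cdots)+\int_{t-\epsilon}^t(\cdots)$. The tail is bounded in norm by $\frac{M}{\Gamma(\alpha)}C_K\,\epsilon^{\alpha}/\alpha$, uniformly in $n$. For the principal part, joint continuity of $(r,x)\mapsto\mathcal{T}_{\alpha}(r)x$ (from the strong continuity in Lemma \ref{Properties of Salpha Talpha} together with the uniform operator bound) shows that $\mathcal{K}_\epsilon:=\{(t-s)^{\alpha-1}\mathcal{T}_{\alpha}(t-s)x:\ s\in[0,t-\epsilon],\ x\in K\}$ is relatively compact. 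By the mean value theorem for the Bochner integral, $\int_0^{t-\epsilon}(t-s)^{\alpha-1}\mathcal{T}_{\alpha}(t-s)f_n(s)\,ds\in(t-\epsilon)\,\overline{conv}(\mathcal{K}_\epsilon)$, which is compact by Theorem \ref{2.13}. Letting $\epsilon\to0$ shows $\{(Gf_n)(t)\}_n$ lies, up to arbitrarily small uniform error, inside a compact set, hence is totally bounded and relatively compact. Combined with the weak convergence above, a subsequence argument (every subsequence has a strongly convergent further subsequence, whose limit must be the weak limit $(Gf_0)(t)$) yields $(Gf_n)(t)\to(Gf_0)(t)$ strongly in $X$ for every $t\in I$.

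Finally, for equicontinuity I would estimate, for $t_1<t_2$, the difference $\norm{(Gf_n)(t_2)-(Gf_n)(t_1)}$ by the three standard terms: the integral over $[t_1,t_2]$, the term carrying $(t_2-s)^{\alpha-1}-(t_1-s)^{\alpha-1}$, and the term carrying $[\mathcal{T}_{\alpha}(t_2-s)-\mathcal{T}_{\alpha}(t_1-s)]$; each is controlled uniformly in $n$ using $f_n(s)\in K$, the last one invoking uniform strong continuity of $\mathcal{T}_{\alpha}$ on the compact set $K$. Pointwise strong convergence together with equicontinuity on the compact interval $I$ forces uniform convergence $Gf_n\to Gf_0$ in $C(I,X)$, which is (S2). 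The main obstacle is precisely the relative compactness step: since the paper deliberately removes compactness of the semigroup, one cannot lean on compactness of $\mathcal{T}_{\alpha}$ and must instead extract compactness from the hypothesis $f_n(t)\in K$ through the mean value theorem and Theorem \ref{2.13}, while simultaneously taming the singularity of $(t-s)^{\alpha-1}$ at $s=t$ by the $\epsilon$-splitting.
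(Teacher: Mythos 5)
Your argument is sound, but there is nothing in the paper to measure it against: the paper states Lemma \ref{GG} with no proof at all, presenting $G$ as an instance of the abstract operator satisfying (S1)--(S2) imported from \cite{MR3320658}, and then relying on Theorem \ref{MT} for semicompact sequences. What you have written is exactly the verification the paper omits, and each step is correct: H\"older's inequality for (S1); and, for (S2), pointwise weak convergence via the square-integrable kernel, relative compactness of $\{(Gf_n)(t)\}_n$ through the $\epsilon$-splitting combined with the mean-value theorem for Bochner integrals and Theorem \ref{2.13} (with joint continuity of $(r,x)\mapsto \mathcal{T}_{\alpha}(r)x$ on $[\epsilon,\nu]\times K$ substituting for any compactness of the semigroup, which the paper deliberately does not assume), the subsequence argument upgrading weak to strong pointwise convergence, and equicontinuity (where the finite-net argument gives continuity of $r\mapsto\mathcal{T}_{\alpha}(r)x$ uniformly over $x\in K$) to pass from pointwise to uniform convergence. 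You correctly identified that the compactness must come from $f_n(t)\in K$ rather than from the semigroup; this is indeed the crux.

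One caveat deserves flagging, though it is an imprecision of the paper at least as much as of your write-up. You justify $2(\alpha-1)>-1$ by writing ``$\alpha>1/p=1/2$'', silently setting $p=2$; in the paper $p$ ranges over all of $(1,\infty)$ with only $\alpha>1/p$ assumed, so $\alpha>1/2$ is \emph{not} a consequence of the standing hypotheses. It is, however, genuinely necessary for the lemma as stated: for $\alpha\le 1/2$ the operator $G$ does not even map all of $L^2(I,X)$ into $C(I,X)$ (already in the scalar case $T(t)=I$, where $\mathcal{T}_{\alpha}(t)=\frac{1}{\Gamma(\alpha)}I$ and $G$ reduces to the Riemann--Liouville integral $I^{\alpha}$, one can pick $f\in L^2$ with $I^{\alpha}f$ unbounded near the endpoint). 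The condition does hold in the paper's application, where $1<p\le 2$ forces $\alpha>1/2$. So you should state $\alpha>1/2$ as an explicit hypothesis of the lemma rather than deriving it from $\alpha>1/p$. Apart from this bookkeeping, and perhaps a one-line remark that $Gf\in C(I,X)$ for every $f\in L^2(I,X)$ (which follows from your three-term continuity estimate applied to a single $f$, via H\"older and dominated convergence, ensuring $G$ is well defined as a map into $C(I,X)$), your proof is complete and supplies precisely what the paper leaves unproved.
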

We finally propose a useful compactness result for semicompact sequences. 
\begin{theorem}{\cite{MR3320658}}\label{MT}
    Let $S : L^{p}(I,X)\rightarrow C(I,X)$ be an operator satisfying the conditions (S1) and (S2) . Then for every p-time semicompact sequence $\{f_n\}_{n\in \mathbb{N}}\subset L^{p}(I,X)$ the sequence $\{Sf_n\}_{n\in \mathbb{N}}$ is relatively compact in $C(I,X)$, and moreover, if $f_{n}\rightharpoonup f_{0}$ then $Sf_{n}\rightarrow Sf_{0}$
    \end{theorem}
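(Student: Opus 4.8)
The plan is to establish a single core statement and derive both conclusions from it: if $\{f_n\}$ is $p$-time semicompact and $f_n \rightharpoonup f_0$ in $L^p(I,X)$, then $Sf_n \to Sf_0$ in $C(I,X)$. The relative compactness assertion will then follow by a subsequence argument exploiting reflexivity of $X$, and the ``moreover'' part is the core statement itself. The idea for the core statement is to reduce, by a truncation whose error is controlled through (S1), to exactly the situation covered by (S2), in which every function takes values in one fixed compact set.

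To set up, note that $p$-time semicompactness gives $\norm{f_n(t)}\le \zeta(t)$ a.e. for some $\zeta\in L^p(I,\mathbb{R}^+)$, so $\{f_n\}$ is bounded in $L^p(I,X)$; moreover the set $\{f\in L^p(I,X):\norm{f(t)}\le\zeta(t)$ a.e.$\}$ is convex and norm-closed, hence weakly closed, so the weak limit also satisfies $\norm{f_0(t)}\le\zeta(t)$ a.e. Now fix $\epsilon>0$ and, by absolute continuity of the integral, choose $\delta>0$ with $\int_E\zeta^p<\epsilon^p$ whenever $\mathrm{meas}(E)<\delta$. The crucial step is to produce a single compact $K\subset X$ and a measurable $J\subset I$ with $\mathrm{meas}(I\setminus J)<\delta$ such that $f_n(t)\in K$ for all $n$ and a.a. $t\in J$. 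I would obtain this from the pointwise relative compactness by viewing $t\mapsto \Phi(t):=\overline{\{f_n(t):n\in\mathbb{N}\}}$ as a measurable multifunction (measurability of the countable family, separability of $X$) whose values are compact a.e.; a Lusin-type theorem then yields a compact $J$ on which $\Phi$ is continuous in the Hausdorff metric, so that $K:=\Phi(J)$ is compact.

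Set $g_n=\mathbf 1_J f_n$ and $g_0=\mathbf 1_J f_0$. Then $g_n(t)\in K\cup\{0\}=:K'$ a.e., one has $\norm{f_n-g_n}_{L^p}=(\int_{I\setminus J}\norm{f_n}^p)^{1/p}\le(\int_{I\setminus J}\zeta^p)^{1/p}<\epsilon$ and likewise $\norm{f_0-g_0}_{L^p}<\epsilon$, while $g_n\rightharpoonup g_0$ because multiplication by $\mathbf 1_J$ is a bounded, hence weakly continuous, operator on $L^p(I,X)$. Now (S2) applies to $\{g_n\}$ (values in the compact $K'$, with $g_n\rightharpoonup g_0$) and gives $Sg_n\to Sg_0$ in $C(I,X)$, while (S1) gives $\norm{Sf_n-Sg_n}_{C(I,X)}\le c\epsilon$ and $\norm{Sg_0-Sf_0}_{C(I,X)}\le c\epsilon$; the triangle inequality then yields $\limsup_n\norm{Sf_n-Sf_0}_{C(I,X)}\le 2c\epsilon$, and letting $\epsilon\to0$ proves the core statement.

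Finally I would assemble the theorem. For relative compactness of $\{Sf_n\}$, take any subsequence, use reflexivity of $X$ together with Corollary \ref{BA} to extract $f_{n_k}\rightharpoonup f_0$ in $L^p(I,X)$, and apply the core statement to obtain $Sf_{n_k}\to Sf_0$; since $C(I,X)$ is a metric space, every subsequence of $\{Sf_n\}$ having a convergent sub-subsequence shows $\{Sf_n\}$ is relatively compact. The ``moreover'' part is immediate from the core statement applied to the (weakly convergent) sequence itself. I expect the main obstacle to be precisely the extraction of the single compact set $K$ over a large-measure set $J$ from the merely pointwise relative compactness of $\{f_n(t)\}$: this is where the multivalued Lusin argument and the separability of $X$ are essential, whereas the remaining steps are just the two structural hypotheses combined with routine $L^p$-estimates.
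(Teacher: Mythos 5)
Your proposal is correct, but note that the paper itself offers no proof of this statement: Theorem \ref{MT} is quoted verbatim from \cite{MR3320658}, which in turn rests on the classical result of Kamenskii--Obukhovskii--Zecca (Theorem 5.1.1 of \cite{kamenskii2011condensing}, stated there for $L^1$ and the Cauchy operator). Your argument is essentially that classical proof transplanted to the $L^p$ setting: the weak closedness of the tube $\{f:\norm{f(t)}\le\zeta(t)\ \text{a.e.}\}$, the multivalued Lusin step producing one compact set $K$ off a set of small measure, the truncation $g_n=\mathbf{1}_Jf_n$ handled by (S2), the error handled by (S1), and the subsequence argument via reflexivity of $L^p(I,X)$ (the paper's Corollary \ref{BA}) for relative compactness. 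You correctly identify the Lusin step as the only nontrivial point; for completeness one should record that it works because $X$ is separable (a standing assumption of the paper), so the hyperspace $(\mathcal{K}(X),h)$ of compact subsets with the Hausdorff metric is a separable metric space, $\Phi(t)=\overline{\{f_n(t)\}}$ is Borel measurable into it (distances $h(\Phi(\cdot),C)$ reduce to countable suprema and infima of measurable scalar functions), Lusin's theorem holds for maps into separable metric targets, and the union $K=\bigcup_{t\in J}\Phi(t)$ of the compact (in $(\mathcal{K}(X),h)$) family $\Phi(J)$ is totally bounded and closed, hence compact in $X$. With those routine verifications supplied, the proof is complete and faithful to the cited source.
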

\begin{corollary}\label{Colo6.6}
	Consider the operator $G:L^2(I,X)\to C(I,X)$ defined in \eqref{G}. Then for every semicompact sequence $\{f_n\}_{n\in \mathbb{N}}\subset L^2(I,X)$ the sequence $\{Gf_n\}_{n\in \mathbb{N}}$ is relatively compact in $C(I,X)$ and moreover, if $f_n\rightharpoonup f_0$ in $L^2(I,X)$ then $Gf_n\to Gf_0$ in $C(I,X)$.
	
	In particular,
	\begin{equation}
		\int_{0}^{\nu}(\nu-s)^{\alpha-1}\mathcal{T}_{\alpha}(\nu-s)f_n(s)ds\to \int_{0}^{\nu}(\nu-s)^{\alpha-1}\mathcal{T}_{\alpha}(\nu-s)f_0(s)ds,~\text{in}~X,
	\end{equation}
	for every semicompact sequence $\{f_n\}_{n\in \mathbb{N}}\subset L^2(I,X)$.
\end{corollary}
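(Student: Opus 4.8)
The plan is to read this corollary as the specialization of the abstract compactness principle to the concrete convolution operator $G$, so that essentially no new analysis is required beyond what the two preceding results already supply. First I would invoke Lemma \ref{GG}, which asserts precisely that the operator $G:L^2(I,X)\to C(I,X)$ defined in \eqref{G} satisfies both the Lipschitz-type estimate (S1) and the weak-to-strong continuity property (S2). This is the only place where the specific structure of $G$---the kernel $(t-s)^{\alpha-1}\mathcal{T}_{\alpha}(t-s)$ together with the bound $\norm{\mathcal{T}_{\alpha}(t)}_{\mathcal{L}(X)}\le M/\Gamma(\alpha)$ furnished by Lemma \ref{Properties of Salpha Talpha}---actually enters the argument.

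Second, with (S1) and (S2) verified, the operator $G$ meets exactly the hypotheses of Theorem \ref{MT} in the case $p=2$. I would therefore apply Theorem \ref{MT} with $S=G$ to the given sequence $\{f_n\}_{n\in\mathbb{N}}$, which is semicompact in $L^2(I,X)$ in the sense of Definition \ref{semicompact}, namely $2$-time integrably bounded with $\{f_n(t)\}_{n\in\mathbb{N}}$ relatively compact for a.e. $t\in I$. Theorem \ref{MT} then yields at once that $\{Gf_n\}_{n\in\mathbb{N}}$ is relatively compact in $C(I,X)$, and that $f_n\rightharpoonup f_0$ in $L^2(I,X)$ forces $Gf_n\to Gf_0$ in $C(I,X)$.

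Finally, for the ``in particular'' assertion I would pass from the uniform conclusion to a pointwise one by evaluating the convergence $Gf_n\to Gf_0$ at the right endpoint $t=\nu$. Since convergence in $C(I,X)$ is uniform convergence on $I$, it forces convergence of the values at every fixed $t\in I$, and in particular
\begin{equation*}
	(Gf_n)(\nu)=\int_{0}^{\nu}(\nu-s)^{\alpha-1}\mathcal{T}_{\alpha}(\nu-s)f_n(s)ds\to \int_{0}^{\nu}(\nu-s)^{\alpha-1}\mathcal{T}_{\alpha}(\nu-s)f_0(s)ds=(Gf_0)(\nu)
\end{equation*}
in $X$, which is exactly the stated limit.

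Because the genuine analytic content has already been isolated in Lemma \ref{GG} (verification of the two structural conditions) and Theorem \ref{MT} (the general semicompactness principle), there is no real obstacle in this corollary; the proof is purely a matter of checking that the hypotheses line up. The only point deserving a moment's care is to confirm that the word ``semicompact'' used in the statement is precisely the $p=2$ instance of the $p$-time semicompactness required by Theorem \ref{MT}, so that the theorem applies verbatim, and that restricting the uniform convergence to $t=\nu$ is legitimate.
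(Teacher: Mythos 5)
Your proposal is correct and coincides with the paper's own (implicit) justification: Lemma \ref{GG} certifies that $G$ satisfies (S1)--(S2), Theorem \ref{MT} with $p=2$ then gives the relative compactness of $\{Gf_n\}$ and the convergence $Gf_n\to Gf_0$ in $C(I,X)$, and evaluating at $t=\nu$ yields the ``in particular'' statement. Your care in checking that ``semicompact'' here is the $p=2$ case of Definition \ref{semicompact} is exactly the right hypothesis-matching.
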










\section*{Acknowledgments}
The first author also would like to thank the Council of Scientific and Industrial Research, New Delhi, Government of India \\(File No. 09/143(0954)/2019-EMR-I), for financial support to carry out his research work. The second author wishes to thank SERB DST, New Delhi, for providing a research project with project grant no- CRG/2021/003343/MS.









\medskip
Received xxxx 20xx; revised xxxx 20xx; early access xxxx 20xx.
\medskip

\end{document}